\DeclareMathAlphabet{\mathscr}{T1}{pzc}{m}{it} 
\newcommand{\pdf}[1]{\texorpdfstring{$#1$}{1}}
\newcommand{\sm}{\scriptstyle}
\titleformat{\section}[block]{\scshape\filcenter\Large}{\thesection.}{.5em}{}
\titleformat{\subsection}[block]{\bfseries\filcenter\large}{\thesubsection.}{.5em}{\medskip}
\titleformat{\subsubsection}[runin]{\bfseries}{\thesubsubsection.}{.5em}{}[.]
\titlespacing{\subsubsection}{0pt}{10pt}{.5em}
\newtheoremstyle{ntheorem}%
	{\topsep}{\topsep}{\itshape}{0pt}{\bfseries}{.}{.5em}%
	{\thmnumber{#2.\hspace{.5em}}\thmname{#1}\thmnote{ (#3)}}
\newtheoremstyle{ndefinition}%
	{\topsep}{\topsep}{\normalfont}{0pt}{\bfseries}{.}{.5em}%
	{\thmnumber{#2.\hspace{.5em}}\thmname{#1}\thmnote{ (#3)}}
\newtheoremstyle{nremark}%
	{\topsep}{\topsep}{\normalfont}{0pt}{\itshape}{.}{.5em}%
	{\thmnumber{}\thmname{#1}\thmnote{ (#3)}}
\theoremstyle{ntheorem}
  	\newtheorem{theorem}[subsubsection]{Theorem}
  	\newtheorem{proposition}[subsubsection]{Proposition}
	\newtheorem{lemma}[subsubsection]{Lemma}
  	\newtheorem{corollary}[subsubsection]{Corollary}
\theoremstyle{ndefinition}
	\newtheorem{example}[subsubsection]{Example}
\theoremstyle{nremark}
\newcommand{\fl}{\rightarrow}
\newcommand{\fll}{\longrightarrow}
\newcommand{\ofll}[1]{\overset{\displaystyle #1}{\fll}}
\newcommand{\ifl}{\rightarrowtail}
\newcommand{\pfl}{\twoheadrightarrow}
\newcommand{\dfl}{\Rightarrow}
\newcommand{\dfll}{\Longrightarrow}
\newcommand{\odfll}[1]{\overset{\displaystyle #1}{\dfll}}
\newcommand{\tfl}{\Rrightarrow}
\newcommand{\qfl}{\xymatrix@1@C=10pt{\ar@4 [r] &}}
\newcommand{\ens}[1]{\left\{ #1 \right\}}
\newcommand{\pres}[2]{\langle #1 \;\vert\; #2 \rangle}
\newcommand{\bigpres}[2]{\big\langle\; #1 \;\;\big\vert\;\; #2 \;\big\rangle}
\newcommand{\cohpres}[3]{\langle #1 \;\vert\; #2 \;\vert\; #3 \rangle}
\newcommand{\cl}[1]{\overline{#1}}
\newcommand{\rep}[1]{\widehat{#1}}
\renewcommand{\tilde}[1]{\widetilde{#1}}
\newcommand{\tck}[1]{{#1}{}^{\top}}
\DeclareMathOperator{\id}{Id}
\DeclareMathOperator{\im}{im}
\DeclareMathOperator{\FDT}{FDT}
\DeclareMathOperator{\FP}{FP}
\DeclareMathOperator{\Std}{Std}
\DeclareMathOperator{\Sq}{Sq}
\DeclareMathOperator{\LP}{LP}
\renewcommand{\phi}{\varphi}
\renewcommand{\epsilon}{\varepsilon}
\newcommand{\Nb}{\mathbb{N}}
\newcommand{\Zb}{\mathbb{Z}}
\newcommand{\Br}{\EuScript{B}}
\newcommand{\Cr}{\EuScript{C}}
\newcommand{\Sr}{\EuScript{S}}
\newcommand{\B}{\mathbf{B}}
\newcommand{\C}{\mathbf{C}}
\newcommand{\D}{\mathbf{D}}
\newcommand{\M}{\mathbf{M}}
\renewcommand{\S}{\mathbf{S}}
\begin{document}

\thispagestyle{empty}
\begin{center}

\begin{LARGE}
\textsc{Polygraphs of finite derivation type}
\end{LARGE}

\bigskip\hrule height 1.5pt \bigskip

\begin{large}\begin{uppercase}
{Yves Guiraud \qquad Philippe Malbos}
\end{uppercase}\end{large}

\vspace*{\stretch{2}}

\begin{small}\begin{minipage}{14cm}
\noindent\textbf{Abstract --} 
Craig Squier proved that, if a monoid can be presented by a finite convergent string rewriting system, then it satisfies the homological finiteness condition left-$\FP_3$. Using this result, he constructed finitely presentable monoids with a decidable word problem, but that cannot be presented by finite convergent rewriting systems. Later, he introduced the condition of finite derivation type, which is a homotopical finiteness property on the presentation complex associated to a monoid presentation. He showed that this condition is an invariant of finite presentations and he gave a constructive way to prove this finiteness property based on the computation of the critical branchings: being of finite derivation type is a necessary condition for a finitely presented monoid to admit a finite convergent presentation. This survey presents Squier's results in the contemporary language of polygraphs and higher-dimensional categories, with new proofs and relations between them.

\smallskip\noindent\textbf{Keywords --} higher-dimensional categories, higher-dimensional rewriting, finite derivation type, low-dimensional homotopy.

\smallskip\noindent\textbf{M.S.C. 2000 --} 68Q42, 03D05, 18D05.
\end{minipage}\end{small}

\vspace*{\stretch{1}}

\begin{small}\begin{minipage}{12cm}
\renewcommand{\contentsname}{\large\MakeUppercase{}}
\setcounter{tocdepth}{2}
\tableofcontents
\end{minipage}\end{small}
\end{center}

\newpage
\section{Introduction}

Given a monoid~$\M$, a generating set~$\Sigma_1$ for~$\M$ provides a way to represent the elements of~$\M$ in the free monoid~$\Sigma_1^*$, i.e.\ as finite words written with the elements of~$\Sigma_1$. But, in general, an element of~$\M$ has several representatives in~$\Sigma_1^*$. The \emph{word problem} for~$\M$ consists in finding a generating set~$\Sigma_1$ and a procedure that can determine whether or not any two elements of~$\Sigma_1^*$ represent the same element in the monoid~$\M$.

\subsubsection*{The word problem and convergent presentations}

One way to solve the word problem is to exhibit a finite presentation $\Sigma=(\Sigma_1,\Sigma_2)$ of~$\M$, made of a generating set~$\Sigma_1$ and a set~$\Sigma_2$ of directed relations with a good computational property: \emph{convergence}. Indeed, in rewriting theory, one studies presentations where the relations in~$\Sigma_2$ are not seen as equalities between the words in~$\Sigma_1^*$, such as~$u=v$, but, instead, as \emph{rewriting rules} that can only be applied in one direction, like $u\dfl v$,
thus simulating a non-reversible computational process \emph{reducing} the word~$u$ into the word~$v$. 

In rewriting theory, such a presentation~$\Sigma$ of a monoid is called a \emph{string rewriting system} or, historically, a \emph{semi-Thue system}; in that case, the directed relations of~$\Sigma_2$ are called \emph{rewriting rules}. A presentation~$\Sigma$ is \emph{convergent} if it has the two properties of
\begin{itemize}
\item \emph{termination}, i.e.\ all the computations end eventually, and
\item \emph{confluence}, i.e.\ different computations on the same input lead to the same result.
\end{itemize}
The \emph{monoid presented by~$\Sigma$} is defined as the quotient, denoted by~$\cl{\Sigma}$, of the free monoid~$\Sigma_1^*$ over~$\Sigma_1$ by the congruence generated by~$\Sigma_2$. By extension, we say that~$\Sigma$ presents any monoid isomorphic to~$\cl{\Sigma}$.

A finite and convergent presentation~$\Sigma$ of a monoid~$\M$ gives a solution to the word problem, called the \emph{normal-form procedure} and defined as follows. Given an element~$u$ of the free monoid~$\Sigma_1^*$, convergence ensures that all the applications of (directed) relations to~$u$, in every possible manner, will eventually produce a unique result: an element~$\rep{u}$ of~$\Sigma_1^*$ where no relation applies anymore. The word~$\rep{u}$ is called the \emph{normal form of~$u$}. By construction, two elements~$u$ and~$v$ of~$\Sigma_1^*$ represent the same element of~$\M$ if, and only if, their normal forms are equal in~$\Sigma_1^*$. Finiteness ensures that one can determine whether an element of~$\Sigma_1^*$ is a normal form or not, by examining all the relations. 

\subsubsection*{Rewriting and polygraphs}

The notion of string rewriting system comes from combinatorial algebra. It was introduced by Axel Thue in 1914 in order to solve the word problem for finitely presented semigroups~\cite{Thue14}.
It is only in 1947 that the problem was shown to be undecidable, independently by Emil Post~\cite{Post47} and Andrei Markov~\cite{Markov47a,Markov47b}. Then in 1943, Maxwell Newman gave a general setting, the abstract rewriting theory, to describe the properties of termination and confluence, and to show the first fundamental result of rewriting: Newman's lemma~\cite{Newman42}. Since then, rewriting theory has been mainly developed in theoretical computer science, producing numerous variants corresponding to different syntaxes of the formulas being transformed: string, terms, terms modulo, $\lambda$-terms, term-graphs, etc. Rewriting is also present in other computational formalisms such as Petri nets or logical systems.

More recently, higher-dimensional rewriting has unified several paradigms of rewriting. This approach is based on presentations by generators and relations of higher-dimensional categories, independently introduced by Albert Burroni and Ross Street under the respective names of \emph{polygraphs} in~\cite{Burroni93} and \emph{computads} in~\cite{Street76,Street87}. Those algebraic objects have been subsequently developed in rewriting theory, fixing the terminology to polygraph in that field~\cite{Metayer03, Guiraud06jpaa, Lafont07, Metayer08, GuiraudMalbos09, Mimram10, GuiraudMalbos11, GuiraudMalbos12mscs, GuiraudMalbos12advances, GuiraudMalbosMimram13, GaussentGuiraudMalbos}. 

The main useful property of polygraphs is to encapsulate, in the same globular object, an algebraic structure corresponding to the syntax of the terms and to the computations on the terms, together with a homotopical structure describing the properties of the computations. As a consequence, polygraphs provide a natural setting to formulate Squier's theory, based on the discovery of deep relations between the computational, the homological and the homotopical properties of presentations by generators and relations.

\subsubsection*{From computational to homological properties}

The normal-form procedure proves that, if a monoid admits a finite convergent presentation, then it has a decidable word problem. The converse implication was still an open problem in the middle of the eighties:

\begin{quote}
\noindent{\bf Question. \cite{Jantzen82,Jantzen85}}
\begin{em}
Does every finitely presented monoid with a decidable word problem admit a finite convergent presentation?
\end{em}
\end{quote}
 
In~\cite{KapurNarendran85}, Deepak Kapur and Paliath Narendran consider Artin's presentation of the monoid~$\B_3^+$ of positive braids on three strands
\[
\pres{s,t}{sts \dfl tst}.
\]
Kapur and Narendran proved that~$\B_3^+$ admits no finite convergent presentation on the two generators~$s$ and~$t$. However, they also proved one gets a finite convergent presentation of~$\B_3^+$ by adjunction of a new generator~$a$ standing for the product~$st$:
\begin{equation}
\label{E:KN}
\bigpres
	{s,t,a}
	{ta \odfll{\alpha} as,\; st\odfll{\beta}a,\; sas \odfll{\gamma} aa,\; saa \odfll{\delta} aat}.
\end{equation}
As a consequence, the word problem for~$\B_3^+$ is solvable. The result of Kapur and Narendran shows that the existence of a finite convergent presentation depends on the chosen generators. Thus, to provide the awaited negative answer to the open question, one would have to exhibit a monoid with a decidable word problem but with no finite convergent presentation \emph{for any possible set of generators}: new methods had to be introduced.

And, indeed, Craig Squier answered the question by linking the existence of a finite convergent presentation for a given monoid~$\M$ to an invariant of the monoid: the \emph{homological type left-$\FP_3$}. Here, invariant is to be taken in the sense that it is independent of the choice of a presentation of~$\M$ and, in particular, of a generating set. A monoid~$\M$ is of homological type left-$\FP_3$ if there exists an exact sequence
\[
\xymatrix{
P_3
	\ar [r]  
& P_2
	\ar [r] 
& P_1
	\ar [r] 
& P_0
	\ar [r] 
& \Zb
	\ar [r]
& 0
}
\]
of projective and finitely generated (left) $\Zb\M$-modules, where~$\Zb$ denotes the trivial $\Zb\M$-module. From a presentation~$\Sigma$ of a monoid~$\M$, one can build an exact sequence of free $\Zb\M$-modules
\begin{equation}
\label{resolution2}
\xymatrix{
\Zb\M[\Sigma_2] 
	\ar [r] ^-{d_2} 
& \Zb\M[\Sigma_1]
	\ar [r] ^-{d_1}
& \Zb\M
	\ar [r] ^-{\epsilon}
& \Zb
	\ar [r]
& 0,
}
\end{equation}
where $\Zb\M[\Sigma_k]$ is the free $\Zb\M$-module over~$\Sigma_k$. In~\cite{Squier87}, Squier proved that, when~$\Sigma$ is convergent, its \emph{critical branchings} form a generating set of the kernel of~$d_2$, where a critical branching of~$\Sigma$ is a minimal overlapping application of two relations on the same word of~$\Sigma_1^*$. For example, the relations $\alpha:ta \dfl as$ and $\beta:st \dfl a$ generate a critical branching $(\beta a, s\alpha)$ on~$sta$:
\[
\xymatrix @C=3em @R=1em {
& aa
\\
sta
	\ar@2@/^/ [ur] ^-{\beta a}
	\ar@2@/_/ [dr] _-{s\alpha}
\\
& sas
}	
\]
The convergence of~$\Sigma$ ensures that every critical branching~$(f,g)$ is confluent, that is, it can be completed by rewriting sequences~$f'$ and~$g'$ as in
\begin{equation}
\label{Equation:Confluence}
\vcenter{\xymatrix @!C @C=2em @R=1em {
& v
	\ar@2@/^/ [dr] ^-{f'}
\\
u 
	\ar@2@/^/ [ur] ^-{f}
	\ar@2@/_/ [dr] _-{g}
&& u'
\\
& w
	\ar@2@/_/ [ur] _-{g'}
}}	
\end{equation}
For example, the presentation~\eqref{E:KN} of~$\B_3^+$ has four critical branchings, and all of them are confluent:
\[
\xymatrix@R=1em@C=2em{
& aa
\\
sta
	\ar@2@/^/ [ur] ^{\beta a} 
	\ar@2@/_/ [dr] _{s\alpha} 
\\
& sas
	\ar@2@/_/ [uu] _{\gamma}
}
\quad
\xymatrix@R=1em@C=2em{
& aat
\\
sast
	\ar@2@/^/ [ur] ^{\gamma t} _(0.66){}="src"
	\ar@2@/_/ [dr] _{sa\beta} ^(0.658){}="tgt"
\\
& saa
	\ar@2@/_/ [uu] _{\delta}
}
\quad
\xymatrix@R=1em@C=1em@!C{
& aaas
\\
sasas
	\ar@2@/^/ [ur] ^{\gamma as}
	\ar@2@/_/ [dr] _{sa\gamma}
&& aata
	\ar@2@/_/ [ul] _{aa\alpha}
\\
& saaa
	\ar@2@/_/ [ur] _{\delta a}
}
\quad
\xymatrix@R=1em@C=1em{
& aaaa
& aaast
	\ar@2 [l] _{aaa\beta}
\\
sasaa
	\ar@2@/^/ [ur] ^{\gamma aa}
	\ar@2@/_/ [dr] _{sa\delta}
\\
& saaat
	\ar@2 [r] _{\delta at}
& aatat
	\ar@2 [uu] _{aa\alpha t}
}
\]
Squier proved that the set~$\Sigma_3$ of critical branchings of a convergent presentation~$\Sigma$ extends the exact sequence~\eqref{resolution2} by one step:
\begin{equation}
\label{resolution3}
\xymatrix{
\Zb\M[\Sigma_3] 
	\ar [r] ^-{d_3} 
& \Zb\M[\Sigma_2] 
	\ar [r] ^-{d_2} 
& \Zb\M[\Sigma_1]
	\ar [r] ^-{d_1}
& \Zb\M
	\ar [r] ^-{\epsilon}
& \Zb
	\ar [r]
& 0,
}
\end{equation}
where the boundary map~$d_3$ is defined on the generic branching~\eqref{Equation:Confluence} by 
\[
d_3(f,g) \:=\: [f] - [g] + [f'] - [g'],
\]
where~$[\cdot]$ satisfies
\[
[ \: \odfll{\displaystyle u_1\alpha_1 v_1} \cdots \odfll{\displaystyle u_n\alpha_n v_n} \: ] 
\:=\: 
\cl{u}_1 [\alpha_1] + \cdots + \cl{u}_n [\alpha_n].
\]
Moreover, when the presentation~$\Sigma$ is finite, then its set of critical branchings is finite, yielding \emph{Squier's homological theorem}.

\begin{quote}
\noindent{\bf \ref{Theorem:SquierAb1}. Theorem {\cite[Theorem~4.1]{Squier87}}}
\emph{
If a monoid admits a finite convergent presentation, then it is of homological type left-$\FP_3$.
}
\end{quote}
Finally, Squier considers in \cite{Squier87} the family~$\S_k$ of monoids, for~$k\geq 2$, presented by
\[
\bigpres
	{a, b, t, (x_i)_{1\leq i\leq k}, (y_i)_{1\leq i\leq k}} 
	{
		\alpha,\;
		(\beta_i)_{1\leq i\leq k},\;
		(\gamma_i)_{1\leq i\leq k},\;
		(\delta_i)_{1\leq i\leq k},\;
		(\epsilon_i)_{1\leq i\leq k}
	}
\]
with 
\[
ab \odfll{\alpha} 1 \,,
\qquad
x_ia \odfll{\beta_i} atx_i \,,
\qquad
x_it \odfll{\gamma_i} tx_i \,,
\qquad
x_ib \odfll{\delta_i} bx_i \,,
\qquad
x_iy_i \odfll{\epsilon_i} 1.
\]
Each~$\S_k$ is a finitely generated monoid with a decidable word problem. But, for~$k\geq 2$, the monoid~$\S_k$ is not of homological type left-$\FP_3$ and, as a consequence, it does not admit a finite convergent presentation. Thus, Squier gave a negative answer to the open question: there exist finitely generated monoids with a decidable word problem that do not admit a finite convergent presentation (for any possible finite set of generators). 

\subsubsection*{Finite derivation type}

Given a monoid~$\M$ with a presentation~$\Sigma$, Squier considers in~\cite{Squier94} the presentation complex of~$\Sigma$, that is a cellular complex with one $0$-cell, whose $1$-cells are the elements of the free monoid~$\Sigma_1^*$ and whose $2$-cells are generated by the relations of~$\Sigma_2$. More precisely, the $2$-cells of the presentation complex are constructed as follows. We denote by~$\Sigma_2^-$ the set obtained by reversing the relations of~$\Sigma_2$:
\[
\Sigma_2^- \:=\: \big\{\; v\odfll{\alpha^-} u\;\;\text{such that}\;\; u\odfll{\alpha} v \text{ is in }\Sigma_2\;\big\}.
\] 
There is a $2$-cell in the presentation complex between each pair of words with shape~$wuw'$ and~$wvw'$ such that $\Sigma_2 \amalg \Sigma_2^-$ contains a relation~$u\dfl v$. Then, Squier extends this $2$-dimension complex with $3$-cells filling all the squares formed by independent applications of relations, such as the following one, where~$u_1\dfl v_1$ and~$u_2 \dfl v_2$ are in $\Sigma_2\amalg \Sigma_2^-$:
\[
\xymatrix @!C @C=2.5em @R=1.5em {
& {wv_1w'u_2w''}
	\ar@2 @/^/ [dr]
		\ar@{} [dd] |-{\equiv}
\\
{wu_1w'u_2w''} 
	\ar@{=>}@/^/ [ur]
	\ar@{=>}@/_/ [dr]
&& {wv_1w'v_2w''}
\\
& {wu_1w'v_2w''}
	\ar@2 @/_/ [ur]
}	
\]
We will see that, in the $2$-categorical formulation of this complex that we consider, this $3$-cell corresponds to the so-called exchange relations.

A \emph{homotopy basis of~$\Sigma$} is a set~$\Sigma_3$ of additional $3$-cells that makes Squier's complex aspherical, i.e.\ such that every $2$-dimensional sphere can be ``filled'' by the $3$-cells of~$\Sigma_3$. The presentation~$\Sigma$ is of \emph{finite derivation type} if it admits a finite homotopy basis. Squier proved that the finite derivation type property is an intrinsic property of the presented monoid:

\begin{quote}
\noindent{\bf \ref{Theorem:Squier2}. Theorem {\cite[Theorem~4.3]{Squier94}}}
\emph{
Let~$\Sigma$ and~$\Xi$ be two finite presentations of the same monoid. Then~$\Sigma$ has finite derivation type if and only if~$\Xi$ has finite derivation type.
}
\end{quote}
The proof given by Squier is based on Tietze transformations. In these notes, we give another proof based on a homotopy bases transfer theorem, Theorem~\ref{Theorem:HomotopyBasesTransfer}. As a consequence of Theorem~\ref{Theorem:Squier2}, we can define the condition of finite derivation type for monoids independently of a considered presentation: a monoid is of \emph{finite derivation type} if its finite presentations are of finite derivation type.

\subsubsection*{From computational to homotopical properties}

In~\cite{Squier94}, Squier links the existence of a finite convergent presentation to the homotopical property of finite derivation type. He proves that, given a convergent presentation~$\Sigma$, it is sufficient to consider one $3$-cell filling the diagram~\eqref{Equation:Confluence} for each critical branching to get a homotopy basis of~$\Sigma$. Such a set of $3$-cells is called a \emph{family of generating confluences of~$\Sigma$}.

\begin{quote}
\noindent{\bf \ref{Theorem:Squier1}. Theorem {\cite[Theorem~5.2]{Squier94}}}
\emph{
Let~$\Sigma$ be a convergent presentation of a monoid. Every family of generating confluences of~$\Sigma$ is a homotopy basis.
}
\end{quote}
Moreover, if~$\Sigma$ is finite, the presentation~$\Sigma$ has finitely many critical branchings.

\begin{quote}
\noindent{\bf \ref{Theorem:Squier}. Theorem {\cite[Theorem~5.3]{Squier94}}}
\emph{
If a monoid admits a finite convergent presentation, then it is of finite derivation type.
}
\end{quote}

In \cite{Squier94}, Squier used Theorem~\ref{Theorem:Squier} to give another proof that there exist finitely generated monoids with a decidable word problem that do not admit a finite convergent presentation. 
Moreover, he showed that the homological finiteness condition left-$\FP_3$
is not sufficient for a finitely presented monoid with a decidable word problem to admit a finite convergent presentation.
Indeed, the monoid~$\S_1$ given by the presentation
\[
\bigpres {a,b,t,x,y} {ab \dfl 1,\; xa\dfl atx,\; xt \dfl tx,\;xb\dfl bx,\;xy\dfl 1}
\]
has a decidable word problem, admits a finite presentation and is of homological type left-$\FP_3$, yet it is not of a finite derivation type (and, thus, it does not admit a finite convergent presentation). This example is entirely developed in Section~\ref{Example:MonoidSk}.

\subsubsection*{Refinements of Squier's conditions}

Squier's homological and homotopical finiteness conditions are related: finite derivation type implies homological type left-$\FP_3$, as proved by several authors~\cite{CremannsOtto94,Pride95,Lafont95}. The converse implication is false in general, as already noted above with the monoid~$\S_1$, but it is true in the special case of groups~\cite{CremannsOtto96}. Squier has proved in~\cite{Squier94} that the invariant homological type left-$\FP_3$ is a necessary, but not sufficient condition for a monoid to admit a finite convergent presentation, the same question being still open for the homotopical invariant finite derivation type. After Squier, various refinements of both invariants have been explored, in the quest for a complete characterisation of the existence of finite convergent presentations of monoids. 

In the homological direction, thanks to the notion of abelian resolution, one defines the more restrictive conditions homological type left-$\FP_n$, for every natural number~$n>3$, and homological type left-$\FP_{\infty}$: a monoid~$\M$ has homological type left-$\FP_{\infty}$ if there exists a resolution of the trivial $\Zb\M$-module by finitely generated and projective $\Zb\M$-modules. In~\cite{Kobayashi90}, a notion of $n$-fold critical branching is used to complete the exact sequence~\eqref{resolution3} into a resolution, obtaining the following implication: if a monoid admits a finite convergent presentation, then it is of homological type left-$\FP_{\infty}$, the converse implication still being false in general. 
The same results are also known for associative algebras presented by a finite Gr\"obner basis~\cite{Anick86} and for groups~\cite{Cohen92,Brown92,Groves90}. One can obtain similar implications with the properties right-$\FP_{\infty}$ and bi-$\FP_{\infty}$, defined with resolutions by right modules and bimodules, respectively. 
In~\cite{GuiraudMalbos12advances}, the authors give a construction of a resolution involving $n$-fold critical branchings and based on the notion of normalisation strategy.

In the homotopical direction, the condition finite derivation type has been refined into~$\FDT_4$, a property about the existence of a finite presentation with a finite homotopy basis, itself satisfying a homotopical finiteness property~\cite{PrideGlashanPasku05}. The condition~$\FDT_4$ is also necessary for a monoid to admit a finite convergent presentation and it is sufficient, but not necessary, for having the conditions homological type left/right/bi-$\FP_4$. Higher-dimensional finite derivation type properties, called~$\FDT_n$, are defined in~\cite{GuiraudMalbos12advances}, as a generalisation in any dimension of Squier's finite derivation type. A monoid with a finite convergent presentation is~$\FDT_{\infty}$ and, for any~$n$, the property~$\FDT_n$ implies the homological type~$\FP_n$~\cite{GuiraudMalbos12advances}.

\subsubsection*{Organisation and prerequisites}

In Section~\ref{Section:twoDimensionalPolygraphs}, we consider presentations of monoids (and, more generally, of categories) by $2$-polygraphs, and we explain their main rewriting properties in Section~\ref{Section:Rewriting}. In Section~\ref{Section:threeDimensionalPolygraphs}, we introduce the property of finite derivation type for categories using the structure of $3$-polygraph and, in Section~\ref{Section:SquierCompletion}, we relate convergence and finite derivation type. This survey is rather self-contained, but wider categorical notions are covered in more detail by Mac Lane in~\cite{MacLane98} and by Barr and Wells in~\cite{BarrWells}. For notions of rewriting theory, one can refer to Baader and Nipkow~\cite{BaaderNipkow98}, Terese~\cite{Terese03} and Book and Otto~\cite{BookOtto93} for the special case of string rewriting. For extension of the finite derivation type property to higher-dimensional rewriting systems, we refer the reader to~\cite{GuiraudMalbos09}.

\section{Low-dimensional categories and polygraphs}
\label{Section:twoDimensionalPolygraphs}

\subsection{Categories and functors}

\subsubsection{Categories}
\label{Categories:OriginalDefinition}

A \emph{category} (or $1$-category) is a data~$\C$ made of a set~$\C_0$, whose elements are called the \emph{$0$-cells of~$\C$}, and, for all $0$-cells~$x$ and~$y$ of~$\C$, of a set~$\C(x,y)$, whose elements are called the \emph{$1$-cells from~$x$ to~$y$ of~$\C$}. Those sets are equipped with the following algebraic structure:
\begin{itemize}
\item for all $0$-cells~$x$, $y$ and~$z$ of~$\C$, a map, called the \emph{composition} (or $0$-composition) of~$\C$,
\[
\gamma_{x,y,z} \::\: \C(x,y)\times\C(y,z) \:\fl\: \C(x,z),
\]
\item for every $0$-cell~$x$ of~$\C$, a specified element~$1_x$ of~$\C(x,x)$, called the \emph{identity of~$x$}.
\end{itemize}
The following relations are required to hold, where we write $u:x\fl y$ to mean that~$u$ belongs to~$\C(x,y)$:
\begin{itemize}
\item the composition is associative, i.e.\ for all $0$-cells~$x$, $y$, $z$ and~$t$ and for all $1$-cells $u:x\fl y$, $v:y\fl z$ and $w:z\fl t$ of~$\C$ ,
\[
\gamma_{x,z,t}(\gamma_{x,y,z}(u,v),w) \:=\: \gamma_{x,y,t}(u,\gamma_{y,z,t}(v,w)),
\]
\item the identities are local units for the composition, i.e.\ for all $0$-cells~$x$ and~$y$ and for every $1$-cell $u:x\fl y$ of~$\C$, 
\[
\gamma_{x,x,y}(1_x,u) \:=\: u \:=\: \gamma_{x,y,y}(u,1_y).
\]
\end{itemize}
A \emph{groupoid} is a category where every $1$-cell has an inverse, that is, for every $1$-cell $u:x\fl y$, there exists a (necessarily unique) $1$-cell $u^-:y\fl x$ such that
\[
\gamma_{x,y,x} (u,u^-) \:=\: 1_x 
\qquad\text{and}\qquad
\gamma_{y,x,y} (u^-,u) \:=\: 1_y.
\]
Monoids (resp.\ groups) correspond exactly to categories (resp.\ groupoids) with only one $0$-cell.

\subsubsection{A globular point of view}

A category~$\C$ can also be seen as a graph
\[
\xymatrix{
\C_0 
& 
\C_1
\ar@<+0.5ex>[l]^{t_0}
\ar@<-0.5ex>[l]_{s_0}
}
\]
where~$\C_1$ is the disjoint union of all the hom-sets~$\C(x,y)$, and the maps~$s_0$ and~$t_0$ send a $1$-cell $u:x\fl y$ to its \emph{source}~$x$ and its \emph{target}~$y$, respectively. We usually simply write~$s(u)$ and~$t(u)$ instead of~$s_0(u)$ and~$t_0(u)$. The composition of~$\C$ equips this graph with a partial function
\[
\star_0 : \C_1 \times \C_1 \fl \C_1
\] 
mapping a pair~$(u,v)$ of \emph{composable} $1$-cells (i.e.\ such that $t(u)=s(v)$) to $u\star_0 v$ (we often simply write~$uv$). By definition, the source and target of a composite $1$-cell are given by
\[
s(u\star_0 v) = s(u)  \qquad\text{and}\qquad  t(u\star_0 v) = t(v).
\]
Moreover, the associativity axiom is written as
\[
(u\star_0 v)\star_0 w \:=\: u\star_0(v\star_0 w).
\]
The identities define an inclusion
\[
\C_0 \ifl \C_1
\]
that maps each $0$-cell~$x$ to the $1$-cell~$1_x$. By definition, the source and target of an identity $1$-cell are
\[
s(1_x) \:=\: x \qquad\text{and}\qquad t(1_x) \:=\:  x.
\]
Finally, the unit axioms become
\[
1_x \star_0 u \:=\: u \:=\: u \star_0 1_y.
\]
This globular definition of categories is equivalent to the original definition given in \ref{Categories:OriginalDefinition}.

\subsubsection{Functors}

Let~$\C$ and~$\D$ be categories. A \emph{functor $F:\C\fl\D$} is a data made of a map $F_0:\C_0\fl\D_0$ and, for all $0$-cells~$x$ and~$y$ of~$\C$, a map
\[
F_{x,y} \::\: \C(x,y) \:\fl\: \D(F(x),F(y)),
\]
such that the following relations are satisfied:
\begin{itemize}
\item for all $0$-cells~$x$, $y$ and~$z$ and all $1$-cells $u:x\fl y$ and $v:y\fl z$ of~$\C$,
\[
F_{x,z}(u\star_0 v) \:=\: F_{x,y}(u) \star_0 F_{y,z}(v),
\]
\item for every $0$-cell~$x$ of~$\C$,
\[
F_{x,x}(1_x) \:=\: 1_{F(x)}.
\]
\end{itemize}
We often just write~$F(x)$ for~$F_0(x)$ and~$F(u)$ for $F_{x,y}(u)$. A functor~$F$ is an \emph{isomorphism} if the map~$F_0$ and each map~$F_{x,y}$ is a bijection. We denote by~$\id_{\C}$ the identity functor on~$\C$. Morphisms of monoids are exactly the functors between the corresponding categories with one $0$-cell. 

\subsection{The word problem}

\subsubsection{\pdf{1}-polygraphs}

A \emph{$1$-polygraph} is a graph~$\Sigma$, i.e.\ a diagram of sets and maps
\[
\xymatrix{
\Sigma_0 
& 
\Sigma_1.
\ar@<+0.5ex>[l]^{t_0}
\ar@<-0.5ex>[l]_{s_0}
}
\]
The elements of~$\Sigma_0$ and~$\Sigma_1$ are called the \emph{$0$-cells} and the \emph{$1$-cells} of~$\Sigma$, respectively. If there is no confusion, we just write $\Sigma=(\Sigma_0,\Sigma_1)$. A $1$-polygraph is \emph{finite} if it has finitely many $0$-cells and $1$-cells.

\subsubsection{Free categories}

If~$\Sigma$ is a $1$-polygraph, the \emph{free category over~$\Sigma$} is the category denoted by~$\Sigma^*$ and defined as follows:
\begin{itemize}
\item the $0$-cells of~$\Sigma^*$ are the ones of~$\Sigma$,
\item the $1$-cells of~$\Sigma^*$ from~$x$ to~$y$ are the finite paths of~$\Sigma$, i.e.\ the finite sequences
\[
x \ofll{u_1} x_1 \ofll{u_2} x_2 \ofll{u_3} \cdots \ofll{u_{n-1}} x_{n-1} \ofll{u_n} y
\]
of $1$-cells of~$\Sigma$,
\item the composition is given by concatenation,
\item the identities are the empty paths.
\end{itemize}
If~$\Sigma$ has only one $0$-cell, then the $1$-cells of the free category~$\Sigma^*$ form the free monoid over the set~$\Sigma_1$.

\subsubsection{Generating \pdf{1}-polygraph}

Let~$\C$ be a category. A $1$-polygraph~$\Sigma$ \emph{generates~$\C$} if there exists a functor
\[
\pi \::\: \Sigma^* \:\pfl\: \C
\]
that is the identity on $0$-cells and such that, for all $0$-cells~$x$ and~$y$ of~$\C$, the map 
\[
\pi_{x,y} \::\: \Sigma^*(x,y) \:\pfl\: \C(x,y)
\]
is surjective. We usually consider that~$\pi$ is implicitly specified for a given generating $1$-polygraph~$\Sigma$ and, if~$u$ is a $1$-cell of~$\Sigma^*$, we just write~$\cl{u}$ instead of~$\pi(u)$. A category is \emph{finitely generated} if it admits a finite generating $1$-polygraph (in particular, the category must have finitely many $0$-cells).

\subsubsection{The word problem for categories}

Let~$\C$ be a category. The \emph{word problem for~$\C$} is the problem of finding a generating $1$-polygraph~$\Sigma$ for~$\C$ together with an algorithm that decides, for any two $1$-cells~$u$ and~$v$ of~$\Sigma^*$, whether or not~$\cl{u}=\cl{v}$ holds in~$\C$ (that is, whether or not the $1$-cells~$u$ and~$v$ represent the same $1$-cell of~$\C$). We note that, to have~$\cl{u}=\cl{v}$, it is necessary for~$u$ and~$v$ to be \emph{parallel}, i.e.\ they must have the same source and the same target. The word problem is undecidable in general for a given category~$\C$, even if it is finitely generated. However, a finite convergent presentation of~$\C$, see~\ref{Section:NormalFormAlgorithm}, provides a solution to the word problem for~$\C$.

\subsection{Presentations of categories}

\subsubsection{Spheres and cellular extensions of categories}

A \emph{$1$-sphere} of a category~$\C$ is a pair $\gamma=(u,v)$ of parallel $1$-cells of~$\C$, that is, with the same source and the same target; such a $1$-sphere is denoted by $\gamma:u\dfl v$. The $1$-cell~$u$ is the \emph{source of~$\gamma$} and~$v$ is its \emph{target}. A \emph{cellular extension} of~$\C$ is a set~$\Gamma$ equipped with a map from~$\Gamma$ to the set of $1$-spheres of~$\C$.

\subsubsection{Congruences}

A \emph{congruence} on a category~$\C$ is an equivalence relation~$\equiv$ on the parallel $1$-cells of~$\C$ that is compatible with the composition of~$\C$, that is, for all $1$-cells 
\[
\xymatrix@C=4em{
x
   \ar[r]^{w} 
& y
   \ar@/^3ex/ [r]^{u}
   \ar@/_3ex/ [r]_{v}
& z
   \ar[r]^{w'}
& t
}
\]
of~$\C$ such that~$u\equiv v$, we have $wuw' \equiv wvw'$. If~$\Gamma$ is a cellular extension of~$\C$, the \emph{congruence generated by~$\Gamma$} is denoted by~$\equiv_{\Gamma}$ and defined as the smallest congruence relation such that, if $\gamma : u\dfl v$ is in~$\Gamma$, then $u\equiv_{\Gamma} v$. The literature also calls~$\equiv_{\Gamma}$ the \emph{Thue congruence} generated by~$\Gamma$.

\subsubsection{Quotient categories}

If~$\C$ is a category and~$\Gamma$ is a cellular extension of~$\C$, the \emph{quotient of~$\C$ by~$\Gamma$} is the category denoted by~$\C/\Gamma$ and defined as follows:
\begin{itemize}
\item the $0$-cells of~$\C/\Gamma$ are the ones of~$\C$,
\item for all $0$-cells~$x$ and~$y$ of~$\C$, the hom-set~$\C/\Gamma(x,y)$ is the quotient of~$\C(x,y)$ by the restriction of~$\equiv_{\Gamma}$.
\end{itemize}
We denote by $\pi_{\Gamma}:\C\pfl\C/\Gamma$ the canonical projection. When the context is clear, we just write~$\pi$ for~$\pi_{\Gamma}$ and~$\cl{u}$ for the image through~$\pi$ of a $1$-cell~$u$ of~$\C$.

\subsubsection{\pdf{2}-polygraphs}

A \emph{$2$-polygraph} is a triple $\Sigma=(\Sigma_0,\Sigma_1,\Sigma_2)$ made of a $1$-polygraph~$(\Sigma_0,\Sigma_1)$, often simply denoted by~$\Sigma_1$, and a cellular extension~$\Sigma_2$ of the free category~$\Sigma_1^*$. In other terms, a $2$-polygraph~$\Sigma$ is a $2$-graph
\[
\xymatrix{
\Sigma_0 
& 
\Sigma_1^\ast
\ar@<+0.5ex>[l]^{t_0}
\ar@<-0.5ex>[l]_{s_0}
&
\Sigma_2
\ar@<+0.5ex>[l]^{t_1}
\ar@<-0.5ex>[l]_{s_1}
}
\]
whose $0$-cells and $1$-cells form a free category. The elements of~$\Sigma_k$ are called the \emph{$k$-cells of~$\Sigma$} and~$\Sigma$ is \emph{finite} if it has finitely many cells in every dimension.

\subsubsection{Presentations of categories}

If~$\Sigma$ is a $2$-polygraph, the \emph{category presented by~$\Sigma$} is the category denoted by~$\cl{\Sigma}$ and defined by
\[
\cl{\Sigma} \:=\: \Sigma_1^*/\Sigma_2.
\]
If~$\C$ is a category, a \emph{presentation of~$\C$} is a $2$-polygraph~$\Sigma$ such that~$\C$ is isomorphic to~$\cl{\Sigma}$. In that case, the $1$-cells of~$\Sigma$ are called \emph{the generating $1$-cells of~$\C$}, or \emph{the generators of~$\C$}, and the $2$-cells of~$\Sigma$ are called \emph{the generating $2$-cells of~$\C$}, or \emph{the relations of~$\C$}. 

\subsubsection{Tietze equivalence} 

Two $2$-polygraphs are \emph{Tietze-equivalent} if they present the same category. It is a standard result that two (finite) $2$-polygraphs are Tietze-equivalent if, and only if, they are related by a (finite) sequence of \emph{elementary Tietze transformations}. On a $2$-polygraph~$\Sigma$, the elementary Tietze transformations are the following operations:
\begin{itemize}
\item adjunction or elimination of a generating $1$-cell~$x$ and of a generating $2$-cell $\alpha:u\dfl x$, where~$u$ is a $1$-cell of the free category over $\Sigma_1\setminus\ens{x}$,
\item adjunction or elimination of a generating $2$-cell $\alpha:u\dfl v$ such that $u\equiv_{\Sigma_2\setminus\ens{\alpha}}v$.
\end{itemize}

\subsection{Two-dimensional categories}

\subsubsection{\pdf{2}-categories}

A \emph{$2$-category} is a data~$\Cr$ made of a set~$\Cr_0$, whose elements are called the \emph{$0$-cells of~$\Cr$}, and, for all $0$-cells~$x$ and~$y$ of~$\Cr$, a category~$\Cr(x,y)$, whose $0$-cells and $1$-cells are respectively called the \emph{$1$-cells} and the \emph{$2$-cells from~$x$ to~$y$ of~$\Cr$}. This data is equipped with the following algebraic structure:
\begin{itemize}
\item for all $0$-cells~$x$, $y$ and~$z$ of~$\Cr$, a functor
\[
\gamma_{x,y,z} \::\: \Cr(x,y)\times\Cr(y,z) \:\fl\: \Cr(x,z),
\]
\item for every $0$-cell~$x$ of~$\Cr$, a specified $0$-cell~$1_x$ of the category~$\Cr(x,x)$.
\end{itemize}
The following relations are required to hold:
\begin{itemize}
\item the composition is associative, i.e.\ for all $0$-cells~$x$, $y$, $z$ and~$t$ of~$\Cr$, 
\[
\gamma_{x,z,t} \circ (\gamma_{x,y,z} \times \id_{\Cr(z,t)}) \:=\: \gamma_{x,y,t} \circ (\id_{\Cr(x,y)}\times \gamma_{y,z,t}),
\]
\item the identities are local units for the composition, i.e.\ for all $0$-cells~$x$ and~$y$ of~$\Cr$,
\[
\gamma_{x,x,y}\circ (1_x \times \id_{\Cr(x,y)}) \:=\: \id_{\Cr(x,y)} \:=\: \gamma_{x,y,y} \circ (\id_{\Cr(x,y)} \times 1_y).
\]
\end{itemize}
This definition of $2$-categories is usually stated as follows: a $2$-category is a category enriched in categories. A \emph{$(2,1)$-category} is a $2$-category whose $2$-cells are invertible for the $1$-composition: in other terms, it is a $2$-category whose hom-categories are groupoids.

\subsubsection{The globular point of view}

A $2$-category can, equivalently, be defined as a $2$-graph
\[
\xymatrix{
\Cr_0 
& 
\Cr_1
\ar@<+0.5ex>[l]^{t_0}
\ar@<-0.5ex>[l]_{s_0}
& 
\Cr_2
\ar@<+0.5ex>[l]^{t_1}
\ar@<-0.5ex>[l]_{s_1}
}
\]
equipped with an additional algebraic structure. The definition of $2$-graph requires that the source and target maps satisfy the globular relations:
\[
s_0\circ s_1 \:=\: s_0\circ t_1 
\qquad\text{and}\qquad 
t_0\circ s_1 \:=\: t_0\circ t_1.
\]
The $2$-graph is equipped with two compositions, the \emph{$0$-composition}~$\star_0$ and the \emph{$1$-composition}~$\star_1$, respectively defined on $0$-composable $1$-cells and $2$-cells, and on $1$-composable $2$-cells. We also have an inclusion of~$\Cr_0$ into~$\Cr_1$ given by the identities of the $2$-category, and an inclusion of~$\Cr_1$ into~$\Cr_2$ induced by the identities of the hom-categories. In details, we have the following operations:
\begin{itemize}
\item for all $1$-cells $x\ofll{u} y\ofll{v} z$, a $0$-composite $1$-cell $x \ofll{u\star_0 v} z$,
\item for all $2$-cells  
$
\xymatrix@C=4em{
x
    \ar@/^4ex/ [r] ^-{u} _{}="src"
    \ar@/_4ex/ [r] _-{u'} ^{}="tgt"
     \ar@2 "src"!<0pt,-10pt>;"tgt"!<0pt,10pt> ^-{f}
& y
    \ar@/^4ex/ [r] ^-{v} _{}="src"
    \ar@/_4ex/ [r] _-{v'} ^{}="tgt"
     \ar@2 "src"!<0pt,-10pt>;"tgt"!<0pt,10pt> ^-{g}
& z
}
$, 
a $0$-composite $2$-cell
$
\xymatrix@C=5em{
x
    \ar@/^4ex/ [r] ^-{u\star_0 v} _{}="src"
     \ar@/_4ex/ [r] _-{u'\star_0 v'} ^{}="tgt"
     \ar@2 "src"!<-10pt,-10pt>;"tgt"!<-10pt,10pt> ^-{f\star_0 g}
& z
}
$, 
\item for all $2$-cells 
$
\xymatrix@C=4em{
x
     \ar@/^5ex/ [r] ^-{u} _{}="src1"
    \ar[r] |(.3){v} ^{}="tgt1" _{}="src2"
    \ar@/_5ex/ [r] _-{w} ^{}="tgt2"
     \ar@2 "src1"!<0pt,-5pt>;"tgt1"!<0pt,5pt> ^-{f}
     \ar@2 "src2"!<0pt,-5pt>;"tgt2"!<0pt,5pt> ^-{g}
& y
}
$, 
a $1$-composite $2$-cell
$
\xymatrix@C=5em{
x
    \ar@/^4ex/ [r] ^-{u} _{}="src"
     \ar@/_4ex/ [r] _-{w} ^{}="tgt"
     \ar@2 "src"!<-10pt,-10pt>;"tgt"!<-10pt,10pt> ^-{f\star_1 g}
& y
}
$,
\item for every $0$-cell~$x$, an identity $1$-cell~$x \ofll{1_x} x$,
\item for every $1$-cell~$x \ofll{u} y$, an identity $2$-cell~$u \odfll{1_u} u$.
\end{itemize}
The following relations hold:
\begin{itemize}
\item for all $1$-cells $x \ofll{u} y \ofll{v} z \ofll{w} t$, \: $(u\star_0 v) \star_0 w = u \star_0 ( v\star_0 w)$,
\item for every $1$-cell $x \ofll{u} y$, \: $1_x \star_0 u = u = u \star_0 1_y$,
\item for all $1$-cells $x \ofll{u} y \ofll{v} z$, \: $1_{u \star_0 v} =1_u\star_0 1_v$, 
\item for all $2$-cells $u \odfll{f} v \odfll{g} w \odfll{h} x$, \: $(f\star_1 g) \star_1 h = f \star_1 (g \star_1 h)$,
\item for all $2$-cells 
$
\xymatrix@C=4em{
x
    \ar@/^4ex/ [r] ^-{u} _{}="src"
    \ar@/_4ex/ [r] _-{u'} ^{}="tgt"
     \ar@2 "src"!<0pt,-10pt>;"tgt"!<0pt,10pt> ^-{f}
& y
    \ar@/^4ex/ [r] ^-{v} _{}="src"
    \ar@/_4ex/ [r] _-{v'} ^{}="tgt"
     \ar@2 "src"!<0pt,-10pt>;"tgt"!<0pt,10pt> ^-{g}
& z
	\ar@/^4ex/ [r] ^-{w} _{}="src"
    \ar@/_4ex/ [r] _-{w'} ^{}="tgt"
     \ar@2 "src"!<0pt,-10pt>;"tgt"!<0pt,10pt> ^-{h}
& t
}
$, \:
$(f\star_0 g) \star_0 h = f\star_0 (g\star_0 h)$,
\item for every $2$-cell
$
\xymatrix@C=4em{
x
    \ar@/^4ex/ [r] ^-{u} _{}="src"
     \ar@/_4ex/ [r] _-{v} ^{}="tgt"
     \ar@2 "src"!<0pt,-10pt>;"tgt"!<0pt,10pt> ^-{f}
&y
}
$, \:
$1_x\star_0 f = f = f\star_0 1_y$,
\item for every $2$-cell $u \odfll{f} v$, \: $1_u\star_1 f = f = f \star_1 1_v$,
\item for all $2$-cells 
$
\xymatrix{
x
    \ar@/^5ex/ [rr] ^-{u} _{}="src1"
    \ar[rr] |(.25){u'} ^{}="tgt1" _{}="src2"
    \ar@/_5ex/ [rr] _-{u''} ^{}="tgt2"
     \ar@2 "src1"!<0pt,-5pt>;"tgt1"!<0pt,-5pt> ^-{f}
     \ar@2 "src2"!<0pt,-5pt>;"tgt2"!<0pt,5pt> ^-{f'}
&& y
    \ar@/^5ex/ [rr] ^-{v} _{}="src3"
    \ar[rr] |(.25){v'} ^{}="tgt3" _{}="src4"
    \ar@/_5ex/ [rr] _-{v''} ^{}="tgt4"
     \ar@2 "src3"!<0pt,-5pt>;"tgt3"!<0pt,-5pt> ^-{g}
     \ar@2 "src4"!<0pt,-5pt>;"tgt4"!<0pt,5pt> ^-{g'}
&& z
}
$, \: 
$(f\star_1 f')\star_0(g\star_1 g') = (f\star_0 g)\star_1 (f'\star_0 g')$.
\end{itemize}
The last relation is usually called the \emph{exchange relation} or the \emph{interchange law} for the compositions~$\star_0$ and~$\star_1$. This globular definition of $2$-categories is equivalent to the enriched one. In particular, the $0$-composition of $2$-cells with identity $2$-cells defines the \emph{whiskering} operations
\begin{itemize}
\item for all cells
$
\xymatrix@C=2em{
x \ar[r]^{w} &
y
    \ar@/^4ex/ [rr] ^-{u} _{}="src"
     \ar@/_4ex/ [rr] _-{v} ^{}="tgt"
     \ar@2 "src"!<0pt,-10pt>;"tgt"!<0pt,10pt> ^-{f}
&&
z
}
$, \:
the \emph{left whiskering}
$
\xymatrix@C=5em{
x
    \ar@/^4ex/ [r] ^-{w\star_0 u} _{}="src"
     \ar@/_4ex/ [r] _-{w\star_0 v} ^{}="tgt"
     \ar@2 "src"!<-11pt,-10pt>;"tgt"!<-11pt,10pt> ^-{w\star_0 f}
& z
}
$,

\item for all cells
$
\xymatrix@C=2em{
x
    \ar@/^4ex/ [rr] ^-{u} _{}="src"
     \ar@/_4ex/ [rr] _-{v} ^{}="tgt"
     \ar@2 "src"!<0pt,-10pt>;"tgt"!<0pt,10pt> ^-{f}
&& y
\ar[r]^{w}
& z
}
$, \:
the \emph{right whiskering}
$
\xymatrix@C=5em{
x
    \ar@/^4ex/ [r] ^-{u\star_0 w} ^{}="src"
     \ar@/_4ex/ [r] _-{v\star_0 w} ^{}="tgt"
     \ar@2 "src"!<-11pt,-10pt>;"tgt"!<-11pt,10pt> ^-{f\star_0 w}
& z
}
$,
\end{itemize}
that satisfy the following relations, implied by the exchange and associativity relations:
\begin{itemize}
\item for all cells
$
\xymatrix@C=2em{
x \ar[r]^u 
& y
    \ar@/^5ex/ [rr] ^-{v} _{}="src1"
    \ar[rr] |(.25){v'} ^{}="tgt1" _{}="src2"
    \ar@/_5ex/ [rr] _-{v''} ^{}="tgt2"
     \ar@2 "src1"!<0pt,-5pt>;"tgt1"!<0pt,5pt> ^-{f}
     \ar@2 "src2"!<0pt,-5pt>;"tgt2"!<0pt,5pt> ^-{f'}
&& z
}
$, \:
$u \star_0 (f\star_1 f') = (u\star_0 f) \star_1 (u\star_0 f')$,
\item for all cells
$
\xymatrix@C=2em{
x
    \ar@/^5ex/ [rr] ^-{u} _{}="src1"
    \ar[rr] |(.25){u'} ^{}="tgt1" _{}="src2"
    \ar@/_5ex/ [rr] _-{u''} ^{}="tgt2"
     \ar@2 "src1"!<0pt,-5pt>;"tgt1"!<0pt,5pt> ^-{f}
     \ar@2 "src2"!<0pt,-5pt>;"tgt2"!<0pt,5pt> ^-{f'}
&& y
\ar[r]^{v}
& z
}
$, \:
$(f\star_1 f') \star_0 v = (f\star_0 v) \star_1 (f'\star_0 v)$,
\item for all cells 
$
\xymatrix@C=2em{
x \ar[r]^u & y  \ar[r]^v 
& z
    \ar@/^4ex/ [rr] ^-{w} _{}="src"
    \ar@/_4ex/ [rr] _-{w'} ^{}="tgt"
     \ar@2 "src"!<0pt,-10pt>;"tgt"!<0pt,10pt> ^-{f}
&& t
}
$, \:
$(u\star_0 v) \star_0 f = u \star_0 (v \star_0 f)$,
\item for all cells 
$
\xymatrix@C=2em{
x
  \ar[r]^u  
& y
    \ar@/^4ex/ [rr] ^-{v} _{}="src"
    \ar@/_4ex/ [rr] _-{v'} ^{}="tgt"
     \ar@2 "src"!<0pt,-10pt>;"tgt"!<0pt,10pt> ^-{f}
&& z
  \ar[r]^{w} 
& t
}
$, \:
$(u\star_0 f) \star_0 w = u \star_0 (f \star_0 w)$,
\item for all cells 
$
\xymatrix@C=2em{
x
    \ar@/^4ex/ [rr] ^-{u} _{}="src"
    \ar@/_4ex/ [rr] _-{u'} ^{}="tgt"
     \ar@2 "src"!<0pt,-10pt>;"tgt"!<0pt,10pt> ^-{f}
&& y 
  \ar[r]^{v} 
& z
  \ar[r]^{w}
& t
}
$, \:
$(f\star_0 v) \star_0 w = f \star_0 (v \star_0 w)$,
\end{itemize}
As for categories, we usually omit the~$\star_0$ notation. For $2$-cells, we write~$s$ and~$t$ instead of~$s_1$ and~$t_1$.

\subsubsection{Free $2$-categories}

Let~$\Sigma$ be a $2$-polygraph. The \emph{free $2$-category over~$\Sigma$} is denoted by~$\Sigma^*$ and defined as follows:
\begin{itemize}
\item the $0$-cells of~$\Sigma^*$ are the ones of~$\Sigma$,
\item for all $0$-cells~$x$ and~$y$ of~$\Sigma$, the hom-category~$\Sigma^*(x,y)$ is presented by the $2$-polygraph 
\begin{itemize}
\item whose $0$-cells are the $1$-cells from~$x$ to~$y$ of~$\Sigma^*$,
\item whose $1$-cells are the 
\[
\xymatrix@C=4em{
x
   \ar[r]^{w} 
& y
   \ar@/^3ex/ [r]^{u} _{}="src"
   \ar@/_3ex/ [r]_{v} ^{}="tgt"
   \ar@2 "src"!<0pt,-10pt>;"tgt"!<0pt,10pt> ^{\alpha}
& z
   \ar[r]^{w'}
& t
}
\]
with $\alpha:u\dfl v$ in~$\Sigma_2$ and~$w$ and~$w'$ in~$\Sigma_1^*$,
\item with one $2$-cell with source $\alpha w v \star_1 u'w \beta$ and target $uw \beta \star_1 \alpha wv'$, for all $\alpha:u\dfl u'$ and $\beta:v\dfl v'$ in~$\Sigma_2$ and~$w$ in~$\Sigma_1^*$,
\end{itemize}
\item for all $0$-cells~$x$, $y$ and~$z$ of~$\Sigma$ the composition functor is given by the concatenation on $1$-cells and, on $2$-cells, by
\begin{align*}
\big( u_1\alpha_1 u'_1 \star_1 \cdots \star_1 u_m \alpha_m u'_m \big)
	\:\star_0\: \big( v_1\beta_1 v'_1 \star_1 \cdots \star_1 v_n \beta_n v'_n \big) 
\quad &{=}
\\
\qquad\qquad u_1\alpha_1 u'_1 v_1 s(\beta_1) v'_1 \star_1 \cdots \star_1 u_m\alpha_m u'_m v_1 s(\beta_1) v'_1 
\quad &{\star_1}
\\
\qquad\qquad  
	u_m t(\alpha_m) u'_m v_1\beta_1 v'_1 \star_1 \cdots \star_1 u_m t(\alpha_m) u'_m v_n\beta_n v'_n \,,
\end{align*}
\item for every $0$-cell~$x$ of~$\Sigma$, the identity $1$-cell~$1_x$ is the one of~$\Sigma_1^*$.
\end{itemize}
By definition of the $2$-category~$\Sigma^*$, for all $1$-cells~$u$ and~$v$ of~$\Sigma^*$, we have~$\cl{u}=\cl{v}$ in the quotient category~$\cl{\Sigma}$ if, and only if, there exists a zigzag sequence of $2$-cells of~$\Sigma^*$ between them:
\[
\xymatrix @C=1.5em {
u 
	\ar@2 [r] ^-{f_1}
& u_1
& v_1 
	\ar@2 [l] _-{g_1}
	\ar@2 [r] ^-{f_2}
& u_2
& (\cdots)
	\ar@2 [l] 
	\ar@2 [r]
& u_{n-1}
& v_{n-1}
	\ar@2 [l] _-{g_{n-1}}
	\ar@2 [r] ^-{f_n}
& u_n
& v.
	\ar@2 [l] _-{g_n}
}
\]

\subsubsection{Free $(2,1)$-categories}

If~$\Sigma$ is a $2$-polygraph, the \emph{free $(2,1)$-category over~$\Sigma$} is denoted by~$\tck{\Sigma}$ and is defined as the $2$-category whose $0$-cells are the ones of~$\Sigma$ and, for all $0$-cells~$x$ and~$y$, the hom-category~$\tck{\Sigma}(x,y)$ is given as the quotient
\[
\tck{\Sigma}(x,y) \:=\: \big(\Sigma\amalg\Sigma^-)^*(x,y) \big/ \text{Inv}(\Sigma_2),
\]
where:
\begin{itemize}
\item the $2$-polygraph~$\Sigma^-$ is obtained from~$\Sigma$ by reversing its $2$-cells,
\item the cellular extension $\text{Inv}(\Sigma_2)$ contains the following two relations for every $2$-cell~$\alpha$ of~$\Sigma$ and all possible $1$-cells~$u$ and~$v$ of~$\Sigma^*$ such that~$s(u)=x$ and~$t(v)=y$:
\[
u\alpha v \star_1 u\alpha^-v \:\equiv\: 1_{u s(\alpha) v} 
\qquad\text{and}\qquad
u\alpha^-v \star_1 u\alpha v \:\equiv\: 1_{u t(\alpha) v}.
\]
\end{itemize}
By definition of the $(2,1)$-category~$\tck{\Sigma}$, for all $1$-cells~$u$ and~$v$ of~$\Sigma^*$, we have~$\cl{u}=\cl{v}$ in the quotient category~$\cl{\Sigma}$ if, and only if, there exists a $2$-cell $f:u\dfl v$ in the $(2,1)$-category~$\tck{\Sigma}$.

\section{Rewriting properties of \pdf{2}-polygraphs}
\label{Section:Rewriting}

\subsection{Convergent presentations of categories}

Let us fix a $2$-polygraph~$\Sigma$.

\subsubsection{Rewriting and normal forms}

A \emph{rewriting step of~$\Sigma$} is a $2$-cell of the free $2$-category~$\Sigma^*$ with shape
\[
\xymatrix@C=4em{
{x}
	\ar [r] ^-{w} 
& {y}
	\ar@/^3ex/ [r] ^-{u} ^{}="src"
	\ar@/_3ex/ [r] _-{v} ^{}="tgt"
	\ar@2 "src"!<0pt,-10pt>;"tgt"!<0pt,10pt> ^-{\phi}
& {z}
	\ar [r] ^-{w'}
& {t}
}
\]
where $\phi:u\dfl v$ is a $2$-cell of~$\Sigma$ and~$w$ and~$w'$ are $1$-cells of~$\Sigma^*$. A \emph{rewriting sequence of~$\Sigma$} is a finite or infinite sequence 
\[
\xymatrix@C=2.5em{
{u_1}
	\ar@2 [r] ^-{f_1}
& {u_2}
	\ar@2 [r] ^-{f_2}
& (\cdots)
	\ar@2 [r] ^-{f_{n-1}}
& {u_n}
	\ar@2 [r] ^-{f_n}
& (\cdots)
}
\]
of rewriting steps. If~$\Sigma$ has a rewriting sequence from~$u$ to~$v$, we say that \emph{$u$ rewrites into~$v$}. Let us note that every $2$-cell~$f$ of~$\Sigma^*$ decomposes into a finite rewriting sequence of~$\Sigma$, this decomposition being unique up to exchange relations. A $1$-cell~$u$ of~$\Sigma^*$ is a \emph{normal form} if~$\Sigma$ has no rewriting step with source~$u$, and a \emph{normal form of~$u$} is a $1$-cell~$v$ of~$\Sigma^*$ that is a normal form and such that~$u$ rewrites into~$v$.

\subsubsection{Termination}

We say that~$\Sigma$ \emph{terminates} if it has no infinite rewriting sequence. In that case, every $1$-cell of~$\Sigma^*$ has at least one normal form and \emph{noetherian induction} allows definitions and proofs of properties of $1$-cells of~$\Sigma^*$ by induction on the size of the $2$-cells leading to normal forms. A \emph{termination order on~$\Sigma$} is an order relation~$\leq$ on parallel $1$-cells of~$\Sigma^*$ such that the following properties are satisfied:
\begin{itemize}
\item the composition of $1$-cells of~$\Sigma^*$ is strictly monotone in both arguments,
\item every decreasing family $(u_n)_{n\in\Nb}$ of parallel $1$-cells of~$\Sigma^*$ is stationary,
\item for every $2$-cell~$\alpha$ of~$\Sigma$, the strict inequality $s(\alpha)>t(\alpha)$ holds. 
\end{itemize}
As a direct consequence of the definition, if~$\Sigma$ admits a termination order, then~$\Sigma$ terminates. A useful example of termination order is the \emph{left degree-wise lexicographic order} (or \emph{deglex} for short) generated by a given order on the $1$-cells of~$\Sigma$. It is defined by the following strict inequalities, where the~$x_i$s and~$y_j$s are $1$-cells of~$\Sigma$:
\[
x_1\cdots x_p \:<\: y_1\cdots y_q, 
	\qquad \text{if } p<q,
\]
\[
x_1\cdots x_{k-1} x_k \cdots x_p \:<\: x_1\cdots x_{k-1} y_k \cdots y_p, 
	\qquad \text{if } x_k < y_k.
\]
The deglex order is total if, and only if, the original order on $1$-cells of~$\Sigma$ is total.

\subsubsection{Branchings}

A \emph{branching} of~$\Sigma$ is a pair~$(f,g)$ of $2$-cells of~$\Sigma_2^*$ with a common source, as in 
\[
\xymatrix @R=1em @C=3em {
& {v}
\\
{u}
	\ar@2@/^/ [ur] ^-{f}
	\ar@2@/_/ [dr] _-{g}
\\
& {w}
}
\]
The $1$-cell~$u$ is the \emph{source} of this branching and the pair~$(v,w)$ is its \emph{target}. We do not distinguish the branchings~$(f,g)$ and~$(g,f)$. A branching~$(f,g)$ is \emph{local} if~$f$ and~$g$ are rewriting steps. Local branchings belong to one of the following three families:
\begin{itemize}
\item \emph{aspherical} branchings, for a rewriting step $u\odfll{f} v$:
\[
\xymatrix @R=1em @C=3em {
& {v}
\\
{u}
	\ar@2@/^/ [ur] ^-{f}
	\ar@2@/_/ [dr] _-{f}
\\
& {v}
}
\]

\item \emph{Peiffer} branchings, for rewriting steps $u\odfll{f} u'$ and~$v\odfll{g} v'$:
\[
\xymatrix @R=1em @C=3em {
& {u'v}
\\
{uv}
	\ar@2@/^/ [ur] ^-{fv}
	\ar@2@/_/ [dr] _-{ug}
\\
& {uv'}
}
\]

\item \emph{overlapping} branchings are the remaining local branchings.
\end{itemize}
Local branchings are compared by ``inclusion'', i.e.\ by the order~$\preccurlyeq$ generated by the relations
\[
(f,g) \:\preccurlyeq\: \big( u f v, u g v)
\]
given for any local branching~$(f,g)$ and any possible $1$-cells~$u$ and~$v$ of~$\Sigma_1^*$. An overlapping local branching that is minimal for the order~$\preccurlyeq$ is called a \emph{critical branching} (or a \emph{critical pair}). The terms ``aspherical'' and ``Peiffer'' come from the corresponding notions for spherical diagrams in Cayley complexes associated to presentations of groups,~\cite{LyndonSchupp77}, while ``critical'' is used in rewriting theory,~\cite{BookOtto93,BaaderNipkow98}. 

\subsubsection{Confluence}

A branching
\[
\xymatrix @R=1em @C=3em {
& {v}
\\
{u}
	\ar@2@/^/ [ur] ^-{f}
	\ar@2@/_/ [dr] _-{g}
\\
& {w}
}
\]
is \emph{confluent} if there exist $2$-cells~$f'$ and~$g'$ in~$\Sigma_2^*$, as in the following diagram: 
\[
\xymatrix @R=1em @C=3em{
& {v}
	\ar@2@/^/ [dr] ^-{f'}
\\
{u}
	\ar@2@/^/ [ur] ^-{f}
	\ar@2@/_/ [dr] _-{g}
&& {u'}
\\
& {w}
	\ar@2@/_/ [ur] _-{g'}
}
\]
We say that~$\Sigma$ is \emph{confluent} (resp.\ \emph{locally confluent}) if all of its branchings (resp.\ local branchings) are confluent.  If~$\Sigma$ is confluent, every $1$-cell of~$\Sigma^*$ has at most one normal form. 

\begin{lemma}
A $2$-polygraph is locally confluent if, and only if, all its critical branchings are confluent. 
\end{lemma}

\begin{proof}
Every aspherical branching is confluent:
\[
\xymatrix @R=1em @C=3em{
& {v}
	\ar@2@/^/ [dr] ^-{1_v}
\\
{u}
	\ar@2@/^/ [ur] ^-{f}
	\ar@2@/_/ [dr] _-{f}
&& {v}
\\
& {v}
	\ar@2@/_/ [ur] _-{1_v}
}
\]
We also have confluence of every Peiffer local branching:
\[
\xymatrix @R=1em @C=3em{
& {u'v}
	\ar@2@/^/ [dr] ^-{u'g}
\\
{uv}
	\ar@2@/^/ [ur] ^-{fv}
	\ar@2@/_/ [dr] _-{ug}
&& {u'v'}
\\
& {uv'}
	\ar@2@/_/ [ur] _-{fv'}
}
\]
We note that, in the aspherical and Peiffer cases, the $2$-cells~$f'$ and~$g'$ can be chosen in such a way that $f\star_1 f'=g\star_1 g'$ holds. Finally, in the case of an overlapping but not minimal local branching~$(f,g)$, there exist  factorisations~$f=uhv$ and~$g=ukv$ with 
\[
\xymatrix @R=1em @C=3em {
& x
\\
w
	\ar@2@/^/ [ur] ^-{h}
	\ar@2@/_/ [dr] _-{k}
\\
& y
}
\]
a critical branching of~$\Sigma$. Moreover, if the branching~$(h,k)$ is confluent, then so is~$(f,g)$:
\[
\vcenter{\xymatrix @R=1em @C=3em{
& {x}
	\ar@2@/^/ [dr] ^-{h'}
\\
{w}
	\ar@2@/^/ [ur] ^-{h}
	\ar@2@/_/ [dr] _-{k}
&& {w'}
\\
& {y}
	\ar@2@/_/ [ur] _-{k'}
}}
\qquad\qquad\leadsto\qquad\qquad
\vcenter{\xymatrix @R=1em @C=3em{
& {uxv}
	\ar@2@/^/ [dr] ^-{uh'v}
\\
{uwv}
	\ar@2@/^/ [ur] ^-{f}
	\ar@2@/_/ [dr] _-{g}
&& {uw'v}
\\
& {uyv}
	\ar@2@/_/ [ur] _-{uk'v}
}}
\qedhere
\]
\end{proof}

The following result, also called the diamond lemma, is implied by Theorem~\ref{Theorem:Squier1}.

\begin{theorem}[Newman's lemma~{\cite[Theorem~3]{Newman42}}]
For terminating $2$-polygraphs, local confluence and confluence are equivalent properties. 
\end{theorem}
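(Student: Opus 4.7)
The plan is to prove Newman's lemma by Noetherian induction on the rewriting order coming from termination. The forward implication is trivial: since a local branching is a branching, confluence implies local confluence. For the converse, assume $\Sigma$ is terminating and locally confluent, and consider an arbitrary branching $(f,g)$ with $f:u\dfl v$ and $g:u\dfl w$ in $\Sigma^*$.

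First I would set up the well-founded induction. Termination ensures that the relation ``$u$ rewrites into $u'$ in at least one step'' is well-founded on parallel $1$-cells, so I may perform Noetherian induction on the source $u$: assuming that every branching whose source is a proper rewriting successor of $u$ is confluent, I prove the same for $(f,g)$. If $f$ or $g$ is an identity $2$-cell, confluence is immediate, so assume both are non-trivial. Decompose them into rewriting sequences, writing $f = f_1 \star_1 f'_1$ and $g = g_1 \star_1 g'_1$, where $f_1 : u \dfl u_1$ and $g_1 : u \dfl u_2$ are rewriting steps and $f'_1 : u_1 \dfl v$, $g'_1 : u_2 \dfl w$ are the remaining (possibly empty) rewriting sequences.

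Now I would tile the square by applying local confluence once and the induction hypothesis twice. Local confluence of $(f_1,g_1)$ produces $2$-cells $h_1 : u_1 \dfl u_3$ and $h_2 : u_2 \dfl u_3$ in $\Sigma^*$. Since $u_1$ is a proper rewriting successor of $u$, the induction hypothesis applies to the branching $(f'_1, h_1)$ with source $u_1$ and yields $2$-cells $k_1 : v \dfl v'$ and $k_2 : u_3 \dfl v'$. Composing, $h_2 \star_1 k_2$ is a rewriting sequence from $u_2$ to $v'$, and since $u_2$ is also a proper rewriting successor of $u$, the induction hypothesis applies to the branching $(g'_1, h_2 \star_1 k_2)$ and yields $l_1 : w \dfl w'$ and $l_2 : v' \dfl w'$. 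Then $k_1 \star_1 l_2 : v \dfl w'$ together with $l_1 : w \dfl w'$ witnesses the confluence of $(f,g)$.

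The main obstacle is purely organisational: justifying that the Noetherian induction is legitimate and that the two applications of the induction hypothesis really are to branchings whose sources are strictly below $u$ in the well-founded order. This is where termination is essential, and the argument would fail without it, as the classical counterexample of a non-terminating but locally confluent rewriting system shows. A secondary technicality is that decompositions of $2$-cells of $\Sigma^*$ into rewriting steps are only unique up to exchange relations, so one must observe that the argument does not depend on a choice of decomposition, which is clear since only the sources and targets of the intermediate $1$-cells matter.
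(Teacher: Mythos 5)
Your proof is correct and follows essentially the same route as the paper: the paper states that Newman's lemma is contained in the proof of Theorem~\ref{Theorem:Squier1}, whose Step~2 performs exactly this Noetherian induction on the source, peeling off the first rewriting steps of $f$ and $g$, closing the resulting local branching, and tiling the remainder with two applications of the induction hypothesis. The only difference is that the paper's version simultaneously builds the $3$-cells witnessing the tiling, whereas you only need the underlying $2$-cells.
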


\subsubsection{Convergent polygraphs}
\label{Section:NormalFormAlgorithm}

We say that~$\Sigma$ is \emph{convergent} if it terminates and it is confluent. Such a~$\Sigma$ is called a \emph{convergent presentation of~$\cl{\Sigma}$}, and of any category that is isomorphic to~$\cl{\Sigma}$. In that case, every $1$-cell~$u$ of~$\Sigma_1^*$ has a unique normal form, denoted by~$\rep{u}$, so that we have~$\cl{u}=\cl{v}$ in~$\cl{\Sigma}$ if, and only if,~$\widehat{u}=\widehat{v}$ holds in~$\Sigma_1^*$. This defines a section $\cl{\Sigma}\ifl\Sigma_1^*$ of the canonical projection $\Sigma_1^\ast \pfl \cl{\Sigma}$, mapping a $1$-cell~$u$ of~$\cl{\Sigma}$ to the unique normal form of its representative $1$-cells in~$\Sigma^*$, still denoted by~$\rep{u}$. 

As a consequence, a finite and convergent $2$-polygraph~$\Sigma$ yields a decision procedure for the word problem of the category~$\cl{\Sigma}$ it presents: the \emph{normal-form procedure}, which takes, as input, two $1$-cells~$u$ and~$v$ of~$\Sigma^*$, and decides whether~$\cl{u}=\cl{v}$ holds in~$\cl{\Sigma}$ or not. For that, the procedure computes the respective normal forms~$\rep{u}$ and~$\rep{v}$ of~$u$ and~$v$. Finiteness is used to test whether a given $1$-cell~$u$ is a normal form or not, by examination of all the relations and their possible applications on~$u$. Then, the equality~$\cl{u}=\cl{v}$ holds in~$\cl{\Sigma}$ if, and only if, the equality~$\rep{u}=\rep{v}$ holds in~$\Sigma^*$.

\subsection{Transformations of \pdf{2}-polygraphs}

\subsubsection{Knuth-Bendix completion}

Let~$\Sigma$ be a terminating $2$-polygraph, equipped with a total termination order~$\leq$. A \emph{Knuth-Bendix completion of~$\Sigma$} is a $2$-polygraph~$\check{\Sigma}$ obtained by the following process. We start with~$\check{\Sigma}$ equal to~$\Sigma$ and with~$\Br$ equal to the set of critical branchings of~$\Sigma$. If~$\Br$ is empty, then the procedure stops. Otherwise, it picks a branching 
\[
\xymatrix @C=3em @R=1em {
& {v}
\\
{u}
	\ar@2@/^/ [ur] ^-{f}
	\ar@2@/_/ [dr] _-{g}
\\
& {w}
}
\]
in~$\Br$ and it performs the following operations:
\begin{enumerate}
\item It computes $2$-cells $f':v\dfl\rep{v}$ and $g':w\dfl\rep{w}$ of~$\check{\Sigma}^*$, where~$\rep{v}$ and~$\rep{w}$ are normal forms for~$v$ and~$w$, respectively, as in the following diagram:
\[
\xymatrix @C=3em @R=1em {
& {v}
	\ar@2 [r] ^-{f'}
& {\rep{v}}
\\
{u}
	\ar@2@/^/ [ur] ^-{f}
	\ar@2@/_/ [dr] _-{g}
\\
& {w}
	\ar@2 [r] _-{g'}
& {\rep{w}}
}
\]
\item It tests which (in)equality~$\rep{v}=\rep{w}$ or~$\rep{v}>\rep{w}$ or~$\rep{v}<\rep{w}$ holds, corresponding to the following three situations, respectively:
\[
\xymatrix @C=3em @R=1em {
& {v}
	\ar@2@/^/ [dr] ^-{f'}
\\
{u}
	\ar@2@/^/ [ur] ^-{f}
	\ar@2@/_/ [dr] _-{g}
&& {\rep{v}=\rep{w}}
\\
& {w}
	\ar@2@/_/ [ur] _-{g'}
}
\qquad
\xymatrix @C=3em @R=1em {
& {v}
	\ar@2 [r] ^-{f'}
& {\rep{v}}
	\ar@2{.>} [dd] ^-{\alpha}
\\
{u}
	\ar@2@/^/ [ur] ^-{f}
	\ar@2@/_/ [dr] _-{g}
\\
& {w}
	\ar@2 [r] _-{g'}
& {\rep{w}}	
}
\qquad
\xymatrix @C=3em @R=1em {
& {v}
	\ar@2 [r] ^-{f'}
& {\rep{v}}
\\
{u}
	\ar@2@/^/ [ur] ^-{f}
	\ar@2@/_/ [dr] _-{g}
\\
& {w}
	\ar@2 [r] _-{g'}
& {\rep{w}}
	\ar@2{.>} [uu] _-{\alpha}
}
\]
If~$\rep{v}\neq\rep{w}$, the procedure adds the dotted $2$-cell~$\alpha$ of the corresponding situation to~$\check{\Sigma}$ and all the new critical branchings created by~$\alpha$ to~$\Br$.

\item It removes~$(f,g)$ from~$\Br$ and restarts from the beginning.
\end{enumerate}
If the procedure stops, it returns the $2$-polygraph~$\check{\Sigma}$. Otherwise, it builds an increasing sequence of $2$-polygraphs, whose limit is denoted by~$\check{\Sigma}$. Note that the resulting $2$-polygraph may depend on the order of examination of the critical branchings. Also, if the starting $2$-polygraph~$\Sigma$ is already convergent, then the Knuth-Bendix completion of~$\Sigma$ is~$\Sigma$. 
By construction, the $2$-polygraph~$\check{\Sigma}$ is convergent and, since all the operations performed by the procedure are Tietze transformations, it is Tietze-equivalent to~$\Sigma$:

\begin{theorem}[\cite{KnuthBendix70}]
Any Knuth-Bendix completion~$\check{\Sigma}$ of a $2$-polygraph~$\Sigma$, equipped with a total termination order, is a convergent presentation of the category~$\cl{\Sigma}$. Moreover, the $2$-polygraph~$\check{\Sigma}$ is finite if, and only if, the $2$-polygraph~$\Sigma$ is finite and the Knuth-Bendix completion procedure halts.
\end{theorem}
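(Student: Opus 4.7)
The plan is to verify the three defining properties of a convergent presentation in turn — namely, that $\check\Sigma$ presents $\cl\Sigma$, that it terminates, and that it is confluent — and then extract the finiteness statement. Write $\check\Sigma_n$ for the $2$-polygraph produced after $n$ iterations of the procedure, so that $\check\Sigma=\bigcup_n\check\Sigma_n$ (with $\check\Sigma_0=\Sigma$).

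For the equivalence of presentations, each newly adjoined $2$-cell $\alpha$ has source and target two normal forms $\rep v$, $\rep w$ of the same $1$-cell $u$ of $\check\Sigma_n^{\ast}$, so the zig-zag $f\star_1 f'$ and $g\star_1 g'$ from the diagram already witnesses $\rep v \equiv_{(\check\Sigma_n)_2} \rep w$. Hence adjoining $\alpha$ is an elementary Tietze transformation of the second kind, which gives $\cl{\check\Sigma_n}\cong\cl\Sigma$ at every stage and therefore $\cl{\check\Sigma}\cong\cl\Sigma$. For termination, by construction each new $2$-cell $\alpha$ is oriented so that $s(\alpha)>t(\alpha)$ with respect to the fixed total termination order $\le$; thus $\le$ remains a termination order on $\check\Sigma$, whence $\check\Sigma$ terminates.

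Confluence is then obtained via Newman's lemma: since $\check\Sigma$ terminates it suffices to prove local confluence, which by the critical pair theorem reduces to confluence of every critical branching. Any critical branching of $\check\Sigma$ uses only two generating $2$-cells and therefore already exists as a critical branching of some $\check\Sigma_n$; it is thus added to $B$ at stage $n$. Under a fair selection strategy for $B$ — which one must assume implicit in the procedure — each such branching $(f,g)$ is eventually processed, and by construction either $(f\star_1 f',g\star_1 g')$ already closes the diagram or the newly adjoined cell $\alpha$ does; in either case $(f,g)$ is confluent in $\check\Sigma$. Hence $\check\Sigma$ is confluent and therefore a convergent presentation of $\cl\Sigma$.

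Finally, the finiteness statement follows by routine bookkeeping: if $\Sigma$ is finite and the procedure halts after $N$ iterations, at most $N$ extra cells are added and $\check\Sigma$ is finite; conversely, if $\check\Sigma$ is finite then $\Sigma\subseteq\check\Sigma$ is finite, $\check\Sigma$ has only finitely many critical branchings, and from the stage onwards at which no more $2$-cells are added, no new branching enters $B$, so $B$ empties in finitely many steps and the procedure halts. The main technical subtlety is the limit passage for confluence: one must be explicit that every critical branching of the limit polygraph already appears at a finite stage (clear, since it involves only two cells) \emph{and} is eventually selected from $B$ (the fairness hypothesis). Once these points are granted, the rest is a combination of Newman's lemma, the critical pair theorem, and the stability of Tietze equivalence under the added cells.
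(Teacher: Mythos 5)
Your proof is correct and follows the same route the paper sketches: the paper's entire justification is the one-line remark that the procedure's operations are Tietze transformations and that $\check{\Sigma}$ is convergent by construction, which is exactly the skeleton you flesh out. Your explicit handling of the limit (every critical branching of $\check{\Sigma}$ already lives in some $\check{\Sigma}_n$ and must be fairly selected from $B$) together with Newman's lemma and the critical-branching theorem supplies precisely the details the paper leaves implicit.
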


\subsubsection{Métivier-Squier reduction}

A $2$-polygraph~$\Sigma$ is \emph{reduced} if, for every $2$-cell $\alpha:u\dfl v$ of~$\Sigma$, we have that~$u$ is a normal form for $\Sigma_2\setminus\ens{\alpha}$ and that~$v$ is a normal form for~$\Sigma_2$. Given a convergent $2$-polygraph~$\Sigma$, the \emph{Métivier-Squier reduction of~$\Sigma$} is the $2$-polygraph obtained by the procedure that successively performs the following operations:
\begin{enumerate}
\item The procedure replaces every generating $2$-cell $\alpha:u\dfl v$ by $\alpha:u\dfl\rep{u}$:
\[
\vcenter{\xymatrix{
{u} 
	\ar@2 [rr] ^-*+{\alpha} 
&& {v}
	\ar@2 [d]
\\
&& {\rep{u}}
}}
\qquad\qquad\longmapsto\qquad\qquad
\vcenter{\xymatrix{
{u} 
	\ar@2 [drr] _-{\alpha} 
&& {v}
	\ar@2 [d]
\\
&& {\rep{u}}
}}
\]

\item Next, if the resulting $2$-polygraph contains parallel generating $2$-cells, the procedure removes all but one:
\[
\xymatrix{
{u} 
	\ar@2 @/^3ex/ [rr] ^-{\alpha_1} ^{}="src"
	\ar@2 @/_3ex/ [rr] _-{\alpha_n} _{}="tgt"
&& {\rep{u}}
\ar@{.} "src"!<0pt,-10pt>;"tgt"!<0pt,10pt>
}
\qquad\qquad\longmapsto\qquad\qquad
\xymatrix{
{u}
	\ar@2 [rr] ^-*+{\alpha}
&& {\rep{u}}
}
\]

\item Finally, the procedure removes every generating $2$-cell~$\alpha$ with source~$vs(\beta)v'$, where~$\beta$ is another generating $2$-cell:
\[
\vcenter{\xymatrix{
{vwv'}
	\ar@2 [rr] ^-*+{\alpha}
	\ar@2 [drr] _-*+{v\beta v'} 
&& {\rep{vwv}'}
\\
&& {v\rep{w}v'}
	\ar@2 [u] 
}}
\qquad\qquad\longmapsto\qquad\qquad
\vcenter{\xymatrix{
{vwv'}
	\ar@2 [drr] _-*+{v\beta v'} 
&& {\rep{vwv}'}
\\
&& {v\rep{w}v'}
	\ar@2 [u] 
}}
\]
\end{enumerate}

By construction, we get the following result, originally obtained by Métivier for term rewriting and by Squier for string rewriting: 

\begin{theorem}[\cite{Metivier83}, {\cite[Theorem~2.4]{Squier87}}]
Every (finite) convergent $2$-polygraph is Tietze-equivalent to a (finite) reduced convergent
$2$-polygraph. 
\end{theorem}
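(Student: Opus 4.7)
The plan is to show that each of the three operations in the procedure is a sequence of elementary Tietze transformations and that each preserves convergence, and then to check that the combined output satisfies the definition of reducedness. Since none of the operations adds $2$-cells, finiteness of $\Sigma$ will be preserved throughout.

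I will decompose step 1 as an adjunction followed by an elimination: first add a new cell $\alpha':u\dfl\rep{u}$, then delete the old $\alpha:u\dfl v$. The adjunction is valid because $u\equiv_{\Sigma}\rep{u}$, by convergence of $\Sigma$. For the elimination, I need $u\equiv_{\Sigma'}v$ in the intermediate polygraph $\Sigma'=\Sigma\setminus\ens{\alpha}\cup\ens{\alpha'}$. I will establish this by noting three facts about $\Sigma'$: it terminates (each step $wuw'\dfl w\rep{u}w'$ strictly decreases in any termination order of $\Sigma$); its rule sources, hence its normal forms, coincide with those of $\Sigma$; and its generated congruence is contained in that of $\Sigma$ since $\alpha'$ is derivable in $\Sigma$. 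A noetherian induction then shows that every word $\Sigma'$-reduces to the unique $\Sigma$-normal form of its $\Sigma$-congruence class, yielding in particular a $\Sigma'$-reduction from $v$ to $\rep{u}$ and hence $u\equiv_{\Sigma'}v$. The same three facts also give convergence of $\Sigma'$ via the Church--Rosser criterion "terminating plus unique normal forms per congruence class".

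Steps 2 and 3 are easier. Deleting all but one cell from a family of parallel $2$-cells (step 2) is plainly a Tietze transformation, and the same uniform argument delivers convergence. In step 3, eliminating $\alpha:vwv'\dfl\rep{vwv'}$ when another cell $\beta:w\dfl\rep{w}$ is present is a Tietze transformation because $vwv'\equiv\rep{vwv'}$ remains derivable without $\alpha$: apply $v\beta v'$ to obtain $v\rep{w}v'$, then reduce further. The source $vwv'$ of the removed cell is still reducible via $v\beta v'$, so normal forms are unchanged and convergence is again preserved.

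Finally, reducedness follows from the three steps combined: step 1 ensures every target is a normal form; step 2 forbids two distinct cells from sharing both source and target; and step 3 forbids the source of one cell from properly containing the source of another, so for each remaining $\alpha:u\dfl\rep{u}$ the source $u$ is a normal form for $\Sigma_2\setminus\ens{\alpha}$. I expect the main obstacle to be the Tietze elimination in step 1: a naive attempt to reroute a reduction $v\dfl\cdots\dfl\rep{u}$ by replacing uses of $\alpha$ with $\alpha'$ derails the path (since $\alpha'$ lands on $\rep{u}$ rather than $v$), so one must instead invoke termination and the coincidence of normal forms between $\Sigma$ and $\Sigma'$ to close the induction.
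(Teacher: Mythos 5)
Your proof is correct and follows exactly the three-step M\'etivier--Squier reduction procedure that the paper itself uses; the paper merely asserts the conclusion ``by construction'', whereas you supply the verification that each step decomposes into elementary Tietze transformations and preserves convergence. The key point you isolate --- that every modification leaves the set of reducible $1$-cells (hence of normal forms) unchanged, so that termination together with uniqueness of normal forms per congruence class re-establishes both the validity of each elimination and the convergence of the result --- is precisely the content the paper leaves implicit.
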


\subsection{Normalisation strategies}
\label{Section:NormalisationStrategies}

\subsubsection{Normalisation strategies}
\label{SS:NormalisationStrategies}

Let~$\Sigma$ be a $2$-polygraph and let~$\C$ denote the category presented by~$\Sigma$. We consider a section $\C \ifl \Sigma_1^\ast$ of the canonical projection $\pi : \Sigma_1^\ast \pfl \C$, i.e.\ we choose, for every $1$-cell~$u$ of~$\C$, a $1$-cell~$\rep{u}$ of~$\Sigma^*$ such that $\pi(\rep{u})=u$. In general, we cannot assume that the chosen section is functorial, i.e.\ that $\rep{uv}=\rep{u}\rep{v}$ holds in~$\Sigma^*$. However, we assume that~$\rep{1}_x=1_x$ holds for every $0$-cell~$x$ of~$\C$. Given a $1$-cell~$u$ of~$\Sigma^*$, we simply write~$\rep{u}$ for~$\rep{\cl{u}}$.

Such a section being fixed, a \emph{normalisation strategy for~$\Sigma$} is a map
\[
\sigma : \Sigma_1^\ast \fl \Sigma_2^\ast
\]
that sends every $1$-cell~$u$ of~$\Sigma^*$ to a $2$-cell
\[
u\odfll{\sigma_u}\rep{u}
\]
of~$\tck{\Sigma}$, such that $\sigma_{\rep{u}} = 1_{\rep{u}}$ holds for every $1$-cell~$u$ of~$\Sigma^\ast$. 

\subsubsection{Left and right normalisation strategies}
\label{SS:LeftRight}

Let~$\Sigma$ be a $2$-polygraph, with a chosen section. A normalisation strategy~$\sigma$ for~$\Sigma$ is a \emph{left} one (resp.\ a \emph{right} one) if it also satisfies 
\[
\sigma_{uv} \:=\quad
\vcenter{\xymatrix @C=4em @R=2.5em {
& \cdot  
	\ar@/^/ [dr] ^-{v}
	\ar@2 []!<2.5pt,-20pt>;[d]!<2.5pt,7.5pt> ^-*+{\sigma_{\rep{u}v}}
\\
\cdot
	\ar@/^4ex/ [ur] ^-{u} _-{}="src"
	\ar@/_/ [ur] |-{\rep{u}} ^-{}="tgt"
	\ar@2 "src"!<5pt,-7.5pt>;"tgt"!<-5pt,7.5pt> ^-{\sigma_{u}}
	\ar@/_/ [rr] _-{\rep{uv}}
&& \cdot
}}
\qquad\qquad
\big( \text{resp.\ } \sigma_{uv} \:=\quad
\vcenter{\xymatrix @C=4em @R=2.5em {
& \cdot  
	\ar@/^4ex/ [dr] ^-{v} _-{}="src"
	\ar@/_/ [dr] |-{\rep{v}} ^-{}="tgt"
	\ar@2 "src"!<-5pt,-7.5pt>;"tgt"!<5pt,7.5pt> ^-{\sigma_{v}}
	\ar@2 []!<-10pt,-20pt>;[d]!<-10pt,7.5pt> ^-*+{\sigma_{u\rep{v}}}
\\
\cdot
	\ar@/^/ [ur] ^-{u}
	\ar@/_/ [rr] _-{\rep{uv}}
&& \cdot
}}
\quad\big).
\]
The $2$-polygraph~$\Sigma$ always admits left and right normalisation strategies. For example, in the left case, let us arbitrarily choose a $2$-cell $\sigma_{ua}:ua\dfl\rep{ua}$ in~$\tck{\Sigma}$, for every~$1$-cell~$u$ of~$\Sigma^*$ and every $1$-cell~$a$ of~$\Sigma$, such that~$\rep{u}=u$ and~$ua$ is defined, with $\rep{ua}\neq ua$. Then we extend~$\sigma$ into a left normalisation strategy for~$\Sigma$ by putting $\sigma_u=1_u$ if $\rep{u}=u$ (which implies $\sigma_{1_x}=1_{1_x}$), and
\[
\sigma_{u} = \sigma_{v}a \star_1 \sigma_{\rep{v}a}
\]
if $\rep{u}\neq u$ and $u=va$ with~$v$ in~$\Sigma^*$ and~$a$ in~$\Sigma$.

\subsubsection{Leftmost and rightmost normalisation strategies}
\label{SS:LeftmostRightmost}

If~$\Sigma$ is a reduced $2$-polygraph, then, for every $1$-cell~$u$ of~$\Sigma^*$, the set of rewriting steps with source~$u$ can be ordered from left to right: for two rewriting steps $f=v\alpha v'$ and $g=w\beta w'$ with source~$u$, we have~$f\prec g$ if the length of~$v$ is strictly smaller than the length of~$w$. If~$\Sigma$ is finite, then the order~$\prec$ is total and the set of rewriting steps of source~$u$ is finite. Hence, this set contains a smallest element~$\lambda_u$ and a greatest element~$\rho_u$, respectively called the \emph{leftmost} and the \emph{rightmost rewriting steps on~$u$}. If, moreover, the $2$-polygraph~$\Sigma$ terminates, the iteration of~$\lambda$ (resp.~$\rho$) yields a normalisation strategy~$\sigma$ called the \emph{leftmost} (resp.\ \emph{rightmost}) \emph{normalisation strategy of~$\Sigma$}: 
\[
\sigma_{u} = \lambda_{u} \star_1 \sigma_{t(\lambda_u)}
\qquad
(\text{resp.\ }
\sigma_u = \rho_u \star_1 \sigma_{t(\rho_u)}).
\]
We prove, by noetherian induction, that the leftmost (resp.\ rightmost) normalisation strategy of~$\Sigma$ is a left (resp.\ right) normalisation strategy. The leftmost and rightmost normalisation strategies give a way to make constructive some of the results we present here. For example, they provide a deterministic choice of a confluence diagram
\[
\xymatrix @!C @R=1em @C=2.5em {
& v
	\ar@2@/^/ [dr] ^{\sigma(v)}
\\
u 
	\ar@2@/^/ [ur] ^{f}
	\ar@2@/_/ [dr] _{g}
&& {\rep{u}}
\\
& w
	\ar@2@/_/ [ur] _{\sigma(w)}
}
\]
for every branching~$(f,g)$ of~$\Sigma$.

\section{Finite derivation type}
\label{Section:threeDimensionalPolygraphs}

\subsection{Coherent presentations of categories}

\subsubsection{Cellular extensions and homotopy bases of \pdf{2}-categories}

Let~$\Cr$ be a $2$-category. A \emph{$2$-sphere of~$\Cr$} is a pair $\gamma=(f,g)$ of parallel $2$-cells of~$\Cr$, i.e.\ with~$s(f)=s(g)$ and~$t(f)=t(g)$. We call~$f$ the \emph{source} of~$\gamma$ and~$g$ its \emph{target} and we denote such a $2$-sphere by $\gamma:f\tfl g$. A \emph{cellular extension} of the $2$-category~$\Cr$ is a set~$\Gamma$ equipped with a map from~$\Gamma$ to the set of $2$-spheres of~$\Cr$.

A \emph{congruence} on a $2$-category~$\Cr$ is an equivalence relation~$\equiv$ on the parallel $2$-cells of~$\Cr$ that is compatible with the two compositions of~$\Cr$, that is, for all cells
\[
\xymatrix @C=3em @!C {
x
  \ar[r]^{w}
&
y
   \ar @/^8ex/ [rr] _-{}="src1"
   \ar @/^4ex/ [rr] ^{}="tgt1" _{}="src2"
   \ar @/_4ex/ [rr] ^{}="src3" _{}="tgt2"
   \ar @/_8ex/ [rr] _{}="tgt3"
&& z
  \ar[r]^{w'}
&
t
\ar@2 "src1"!<0pt,-4pt>;"tgt1"!<0pt,4pt> _-{h}
\ar@2 "src2"!<-10pt,-9pt>;"tgt2"!<-10pt,9pt> _-{f}
\ar@2 "src2"!<10pt,-9pt>;"tgt2"!<10pt,9pt> ^-{g}
\ar@2 "src3"!<0pt,-4pt>;"tgt3"!<0pt,4pt> _-{k}
}
\]
of~$\Cr$ such that~$f\equiv g$, we have
\[
w\star_0(h\star_1 f \star_1 k)\star_0 w' \;\equiv\; w\star_0(h\star_1 g \star_1 k)\star_0 w'.
\]
If~$\Gamma$ is a cellular extension of~$\Cr$, the \emph{congruence generated by~$\Gamma$} is denoted by~$\equiv_{\Gamma}$ and defined as the smallest congruence such that, if~$\Gamma$ contains a $3$-cell $\gamma:f\tfl g$, then~$f\equiv_{\Gamma}g$. The \emph{quotient $2$-category} of a $2$-category~$\Cr$ by a congruence relation~$\equiv$ is the $2$-category, denoted by~$\Cr/\equiv$, whose $0$-cells and $1$-cells are those of~$\Cr$ and whose $2$-cells are the equivalence classes of $2$-cells of~$\Cr$ modulo the congruence~$\equiv$. 

A \emph{homotopy basis of~$\Cr$} is a cellular extension~$\Gamma$ of~$\Cr$ such that, for every parallel $2$-cells~$f$ and~$g$ of~$\Cr$, we have~$f\equiv_{\Gamma} g$, that is, the equality~$\cl{f}=\cl{g}$ holds in the quotient $2$-category~$\Cr/\equiv_{\Gamma}$. For instance, the set of $2$-spheres of~$\Cr$ forms a homotopy basis.

\subsubsection{\pdf{(3,1)}-polygraphs and coherent presentations}

A \emph{$(3,1)$-polygraph} is a pair $\Sigma=(\Sigma_2,\Sigma_3)$ made of a $2$-polygraph~$\Sigma_2$ and a cellular extension~$\Sigma_3$ of the free $(2,1)$-category~$\tck{\Sigma}_2$ over~$\Sigma_2$, as summarised in
\[
\xymatrix @C=3em{
\Sigma_0
& \Sigma_1^*
	\ar@<.5ex> [l]^{t_0}
	\ar@<-.5ex> [l]_{s_0}
&  \tck{\Sigma}_2
	\ar@<.5ex> [l]^{t_1}
	\ar@<-.5ex> [l]_{s_1}
&  \Sigma_3
	\ar@<.5ex> [l]^{t_2}
	\ar@<-.5ex> [l]_{s_2}
}
\]
If~$\C$ is a category, a \emph{coherent presentation of~$\C$} is a $(3,1)$-polygraph $\Sigma=(\Sigma_2,\Sigma_3)$ such that~$\Sigma_2$ is a presentation of~$\C$ and~$\Sigma_3$ is a homotopy basis of~$\tck{\Sigma}_2$.

\subsubsection{Finite derivation type}

A $2$-polygraph~$\Sigma$ is of \emph{finite derivation type} if it is finite and if the $(2,1)$-category~$\tck{\Sigma}$ admits a finite homotopy basis. A category~$\C$ is of \emph{finite derivation type} if it admits a finite coherent presentation.

\subsubsection{\pdf{3}-categories}

The definition of $3$-category is adapted from the one of $2$-category by replacement of the hom-categories and the composition functors by hom-$2$-categories and composition $2$-functors. In particular, in a $3$-category, the $3$-cells can be composed in three different ways:
\begin{itemize}
\item by~$\star_0$, along their $0$-dimensional boundary:
\[
\xymatrix @C=3em @!C{
x
   \ar @/^6ex/ [rr] ^{u} _{}="src1"
   \ar @/_6ex/ [rr] _{u'} ^{}="tgt1"
&& y
\ar@2 "src1"!<-15pt,-15pt>;"tgt1"!<-15pt,15pt> _{f}="srcA"
\ar@2 "src1"!<15pt,-15pt>;"tgt1"!<15pt,15pt> ^{f'}="tgtA"
\ar@3 "srcA"!<15pt,0pt>;"tgtA"!<-15pt,0pt> ^*+{A}
   \ar @/^6ex/ [rr] ^{v} _{}="src2"
   \ar @/_6ex/ [rr] _{v'} ^{}="tgt2"
&& z
\ar@2 "src2"!<-15pt,-15pt>;"tgt2"!<-15pt,15pt> _{g}="srcB"
\ar@2 "src2"!<15pt,-15pt>;"tgt2"!<15pt,15pt> ^{g'}="tgtB"
\ar@3 "srcB"!<15pt,0pt>;"tgtB"!<-15pt,0pt> ^*+{B}
}
\qquad\longmapsto\qquad
\xymatrix @C=4em @!C{
x
   \ar @/^6ex/ [rr] ^{uv} _{}="src1"
   \ar @/_6ex/ [rr] _{u'v'} _{}="tgt1"
&& z
\ar@2 "src1"!<-20pt,-15pt>;"tgt1"!<-20pt,15pt> _{fg}="srcA"
\ar@2 "src1"!<20pt,-15pt>;"tgt1"!<20pt,15pt> ^{f'g'}="tgtA"
\ar@3 "srcA"!<20pt,0pt>;"tgtA"!<-20pt,0pt> ^*+{AB}
}
\]
\item by~$\star_1$, along their $1$-dimensional boundary:
\[
\xymatrix @C=4em @!C{
x
   \ar @/^8ex/ [rr] ^{u} _{}="src1"
   \ar [rr] |{v} ^{}="tgt1" _{}="src2" 
   \ar @/_8ex/ [rr] _{w} _{}="tgt2"
&& y
\ar@2 "src1"!<-20pt,-10pt>;"tgt1"!<-20pt,0pt> _{f}="srcA"
\ar@2 "src1"!<20pt,-10pt>;"tgt1"!<20pt,0pt> ^{f'}="tgtA"
\ar@2 "src2"!<-20pt,0pt>;"tgt2"!<-20pt,10pt> _{g}="srcB"
\ar@2 "src2"!<20pt,0pt>;"tgt2"!<20pt,10pt> ^{g'}="tgtB"
\ar@3 "srcA"!<20pt,0pt>;"tgtA"!<-20pt,0pt> ^*+{A}
\ar@3 "srcB"!<20pt,0pt>;"tgtB"!<-20pt,0pt> _*+{B}
}
\qquad\longmapsto\qquad
\xymatrix @C=5em @!C{
x
   \ar @/^8ex/ [rr] ^{u} _{}="src1"
   \ar @/_8ex/ [rr] _{w} _{}="tgt1"
&& y
\ar@2 "src1"!<-20pt,-15pt>;"tgt1"!<-20pt,15pt> _{f\star_1 g} ^{}="srcA"
\ar@2 "src1"!<20pt,-15pt>;"tgt1"!<20pt,15pt> ^{f'\star_1 g'} _{}="tgtA"
\ar@3 "srcA"!<10pt,0pt>;"tgtA"!<-10pt,0pt> ^*+{A\star_1 B}
}
\]
\item by~$\star_2$, along their $2$-dimensional boundary:
\[
\xymatrix @C=4em @!C{
x
   \ar @/^6ex/ [rr] ^{u} _{}="src"
   \ar @/_6ex/ [rr] _{v} _{}="tgt"
&& y
\ar@2 "src"!<-30pt,-15pt>;"tgt"!<-30pt,15pt> _{f} ^{}="srcA"
\ar@2 "src"!<0pt,-10pt>;"tgt"!<0pt,10pt> |*+{g} _{} ^{}="srcB" _{}="tgtA"
\ar@2 "src"!<30pt,-15pt>;"tgt"!<30pt,15pt> ^{h} _{}="tgtB"
\ar@3 "srcA"!<10pt,0pt>;"tgtA"!<-10pt,0pt> ^*+{A}
\ar@3 "srcB"!<10pt,0pt>;"tgtB"!<-10pt,0pt> ^*+{B}
}
\qquad\longmapsto\qquad
\xymatrix @C=4em @!C{
x
   \ar @/^6ex/ [rr] ^{u} _{}="src1"
   \ar @/_6ex/ [rr] _{v} _{}="tgt1"
&& y
\ar@2 "src1"!<-20pt,-15pt>;"tgt1"!<-20pt,15pt> _{f} ^{}="srcA"
\ar@2 "src1"!<20pt,-15pt>;"tgt1"!<20pt,15pt> ^{h} _{}="tgtA"
\ar@3 "srcA"!<10pt,0pt>;"tgtA"!<-10pt,0pt> ^*+{A\star_2 B}
}
\]
\end{itemize}
A \emph{$(3,1)$-category} is a $3$-category whose $2$-cells are invertible for the composition~$\star_1$ and whose $3$-cells are invertible for the composition~$\star_2$. This implies that $3$-cells are also invertible for the composition~$\star_1$.

\subsubsection{Free \pdf{(3,1)}-categories}

Given a $(3,1)$-polygraph~$\Sigma$, the \emph{free $(3,1)$-category over~$\Sigma$} is denoted by~$\tck{\Sigma}$ and defined as follows:
\begin{itemize}
\item its underlying $2$-category is the free $(2,1)$-category~$\tck{\Sigma}_2$,
\item its $3$-cells are all the formal compositions by~$\star_0$, $\star_1$ and~$\star_2$ of $3$-cells of~$\Sigma$, of their inverses and of identities of $2$-cells, up to associativity, identity, exchange and inverse relations.
\end{itemize}
In particular, we get that~$\Sigma_3$ is a homotopy basis of~$\tck{\Sigma}_2$ if, and only if, for every pair~$(f,g)$ of parallel $2$-cells of~$\tck{\Sigma}_2$, there exists a $3$-cell $A:f\tfl g$ in~$\tck{\Sigma}$.

\subsection{The homotopy basis transfer theorem}

The objective of this section is to prove Theorem~\ref{Theorem:Squier2}: given two finite presentations of the same category, both are of finite derivation type or neither is. Towards this goal, we prove Theorem~\ref{Theorem:HomotopyBasesTransfer}, that allows transfers of homotopy bases between presentations of the same category.

\begin{lemma}
\label{Lemma:TietzeEquivalence}
Let~$\C$ be a category and let~$\Sigma$ and~$\Xi$ be presentations of~$\C$. There exist $2$-functors
\[
F:\tck{\Sigma}\fl\tck{\Xi}
\qquad\text{and}\qquad
G:\tck{\Xi}\fl\tck{\Sigma}
\]
and, for every $1$-cells~$u$ of~$\tck{\Sigma}$ and~$v$ of~$\tck{\Xi}$, there exist $2$-cells
\[
\sigma_u : GF(u) \dfl u
\qquad\text{and}\qquad
\tau_v : FG(v) \dfl v
\]
in~$\tck{\Sigma}$ and~$\tck{\Xi}$, such that the following conditions are satisfied:
\begin{itemize}
\item the $2$-functors~$F$ and~$G$ induce the identity through the canonical projections onto~$\C$:
\[
\xymatrix{
{\tck{\Sigma}}
	\ar@{->>} [r]^{\pi_\Sigma}
	\ar [d] _-{F}
	\ar@{} [dr] |{=}
& \C
	\ar [d] ^-{\id_{\C}} 
\\
{\tck{\Xi}} 
	\ar@{->>} [r]_{\pi_\Xi}
& \C
}
\qquad\qquad
\xymatrix{
{\tck{\Sigma}}
	\ar@{->>} [r]^{\pi_\Sigma}
	\ar@{} [dr] |{=}
& \C
\\
{\tck{\Xi}}
	\ar@{->>} [r]_{\pi_\Xi}
	\ar [u] ^-{G}
& \C
	\ar [u] _-{\id_{\C}}
}
\]
\item the $2$-cells~$\sigma_u$ and~$\tau_v$ are functorial in~$u$ and~$v$:
\[
\sigma_{1_x} = 1_{1_x},
\qquad 
\tau_{1_y} = 1_{1_y},
\qquad
\sigma_{uu'} = \sigma_u\sigma_{u'}
\quad\text{and}\quad
\tau_{vv'} = \tau_v\tau_{v'}.
\]
\end{itemize}
\end{lemma}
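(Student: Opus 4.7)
The plan is to build $F$ and $G$ by freely defining their values on the generating cells of the source polygraphs, using the fact that both $\Sigma$ and $\Upsilon$ present the same category $\C$ to guarantee the existence of suitable target cells in the codomain $(2,1)$-category.

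First I would construct $F : \tck{\Sigma} \to \tck{\Upsilon}$ as follows. Set $F(x) = x$ on $0$-cells. For each generating $1$-cell $a : x \fl y$ of $\Sigma_1$, the element $\cl{a}$ lies in $\C(x,y)$, and since $\Upsilon_1$ generates $\C$, there exists a $1$-cell $F(a) : x \fl y$ in $\Upsilon_1^*$ with $\pi_\Upsilon(F(a)) = \cl{a}$; pick one. Extend by concatenation to all $1$-cells of $\Sigma_1^*$, so that $F(uu') = F(u)F(u')$ and $F(1_x) = 1_x$. Now for each generating $2$-cell $\alpha : u \dfl v$ of $\Sigma_2$, the $1$-cells $F(u)$ and $F(v)$ are parallel in $\tck{\Upsilon}$ and satisfy $\pi_\Upsilon(F(u)) = \cl{u} = \cl{v} = \pi_\Upsilon(F(v))$ in $\C$; since $\Upsilon$ presents $\C$, the characterisation of equal $1$-cells in $\cl{\Upsilon}$ given at the end of the construction of $\tck{\Upsilon}$ yields a $2$-cell $F(\alpha) : F(u) \dfl F(v)$ in $\tck{\Upsilon}$, which we choose. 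The universal property of the free $(2,1)$-category $\tck{\Sigma}_2 = \tck{\Sigma}$ then extends these choices uniquely to a $2$-functor $F : \tck{\Sigma} \to \tck{\Upsilon}$, and by construction $\pi_\Upsilon \circ F = \pi_\Sigma$. The construction of $G : \tck{\Upsilon} \to \tck{\Sigma}$ is symmetric.

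Next I would define the $2$-cells $\sigma_u$, again by generation and extension. For each generating $1$-cell $a$ of $\Sigma_1$, the $1$-cells $GF(a)$ and $a$ are parallel in $\tck{\Sigma}$ and both project to $\cl{a}$ in $\C$ (using $\pi_\Sigma \circ G = \pi_\Upsilon$ and $\pi_\Upsilon \circ F = \pi_\Sigma$, so $\pi_\Sigma \circ GF = \pi_\Sigma$); hence, as above, there exists a $2$-cell $\sigma_a : GF(a) \dfl a$ in $\tck{\Sigma}$. Then for an arbitrary $1$-cell $u = a_1 \cdots a_n$ of $\tck{\Sigma}$, set
\[
\sigma_u \:=\: \sigma_{a_1} \star_0 \cdots \star_0 \sigma_{a_n},
\]
with $\sigma_{1_x} = 1_{1_x}$ when $u = 1_x$. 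Functoriality $\sigma_{uu'} = \sigma_u \star_0 \sigma_{u'}$ is immediate by construction, and the source of $\sigma_u$ is $GF(a_1)\cdots GF(a_n) = GF(u)$ since $GF$ is a $2$-functor and is the identity on $0$-cells. The definition of $\tau_v$ is entirely analogous.

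There is no genuine obstacle here: the construction relies only on the universal property of free $(2,1)$-categories and on the fact, already established in the paragraph preceding the lemma, that parallel $1$-cells of $\tck{\Upsilon}$ (resp.\ $\tck{\Sigma}$) with equal images in $\C$ are connected by a $2$-cell of $\tck{\Upsilon}$ (resp.\ $\tck{\Sigma}$). The only point requiring a touch of care is checking that $F$, $G$, $\sigma$, $\tau$ satisfy the stated compatibilities on the nose rather than merely up to higher cells; this is arranged by defining $\sigma$ and $\tau$ on generators and extending by $\star_0$-concatenation, which forces strict functoriality in $u$ and $v$.
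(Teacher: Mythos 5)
Your construction is correct and is essentially the same as the paper's proof: define $F$ and $G$ on generating $1$- and $2$-cells using the fact that each polygraph presents $\C$ (so that parallel $1$-cells with equal projections are joined by a $2$-cell in the free $(2,1)$-category), extend by functoriality, and define $\sigma$ and $\tau$ on generating $1$-cells before extending by $\star_0$-concatenation to force strict functoriality.
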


\begin{proof}
Let us define~$F$, the case of~$G$ being symmetric. On a $0$-cell~$x$ of~$\Sigma$, we take~$F(x)=x$. If $a:x\fl y$ is a $1$-cell of~$\Sigma$, we choose, in an arbitrary way, a $1$-cell $F(a):x\fl y$ in~$\tck{\Xi}$ such that $\pi_{\Xi}F(a)=\pi_{\Sigma}(a)$. Then, we extend~$F$ to every $1$-cell of~$\tck{\Sigma}$ by functoriality. Let $\alpha:u\dfl u'$ be a $2$-cell of~$\Sigma$. Since~$\Sigma$ is a presentation of~$\C$, we have $\pi_{\Sigma}(u)=\pi_{\Sigma}(u')$, so that $\pi_{\Xi}F(u)=\pi_{\Xi}F(u')$ holds. Using the fact that~$\Xi$ is a presentation of~$\C$, we arbitrarily choose a $2$-cell $F(\alpha):F(u)\dfl F(u')$ in~$\tck{\Xi}$. Then, we extend~$F$ to every $2$-cell of~$\tck{\Sigma}$ by functoriality.

Now, let us define~$\sigma$, the case of~$\tau$ being symmetric. Let~$a$ be a $1$-cell of~$\Sigma$. By construction of~$F$ and~$G$, we have:
\[
\pi_{\Sigma} GF(a) \:=\: \pi_{\Xi} F(a) \:=\: \pi_{\Sigma}(a).
\]
Since~$\Sigma$ is a presentation of~$\C$, there exists a $2$-cell $\sigma_a:GF(a)\dfl a$ in~$\tck{\Sigma}$. We extend~$\sigma$ to every $1$-cell~$u$ of~$\tck{\Sigma}$ by functoriality.
\end{proof}

\begin{theorem}
\label{Theorem:HomotopyBasesTransfer}
Let~$\C$ be a category, let~$\Sigma$ and~$\Xi$ be two presentations of~$\C$ and let~$F$, $G$ and~$\tau$ be chosen as in Lemma~\ref{Lemma:TietzeEquivalence}. If~$\Gamma$ is a homotopy basis of~$\tck{\Sigma}$, then
\[
\Delta \:=\: F(\Gamma) \:\amalg\: \tau_{\Xi}
\] 
is a homotopy basis of~$\tck{\Xi}$, where:
\begin{itemize}
\item the cellular extension~$F(\Gamma)$ contains one $3$-cell
\[
\xymatrix @C=6em{
F(u)
	\ar@2 @/^4ex/ [r] ^-{F(f)} _-{}="src"
	\ar@2 @/_4ex/ [r] _-{F(g)} ^-{}="tgt"
\ar@3 "src"!<0pt,-10pt>;"tgt"!<0pt,10pt> ^-{F(\gamma)}
& F(v)
}
\]
for every $3$-cell $\gamma:f\tfl g$ of~$\Gamma$,
\item the cellular extension~$\tau_{\Xi}$ contains one $3$-cell
\[
\xymatrix @!C @C=2.5em @R=1em{
& FG(v)
	\ar@2 @/^/ [dr] ^-{\tau_v}
\ar@3 []!<0pt,-15pt>;[dd]!<0pt,15pt> ^-{\tau_{\alpha}}
\\
FG(u) 
	\ar@2 @/^/ [ur] ^-{FG(\alpha)}
	\ar@2 @/_/ [dr] _-{\tau_u}
&& v	
\\
& u
	\ar@2 @/_/ [ur] _-{\alpha}
}
\]
for every $2$-cell $\alpha:u\dfl v$ of~$\Xi$.
\end{itemize}
\end{theorem}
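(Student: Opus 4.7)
The plan is to move any pair of parallel $2$-cells $f, g$ of $\tck{\Upsilon}$ across to $\tck{\Sigma}$ via $G$, connect them there with a $3$-cell coming from the homotopy basis $\Gamma$, and transport the connecting $3$-cell back via $F$, closing the loop with the $2$-cells $\tau_u$ and $\tau_v$ from Lemma~\ref{Lemma:TietzeEquivalence}. The only nontrivial ingredient is a ``pseudo-naturality'' property of $\tau$: the $3$-cells of $\tau_{\Upsilon}$ provide this at the level of generating $2$-cells of $\Upsilon$, and the bulk of the work consists in extending it to all $2$-cells of $\tck{\Upsilon}$.

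As a preliminary, I would extend $F$ to a $(3,1)$-functor on the free $(3,1)$-category generated by $(\Sigma, \Gamma)$, sending each $\gamma \in \Gamma$ to the corresponding generator of $F(\Gamma) \subseteq \Delta$; this is routine from the universal property of the free construction.

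The technical core is the following lemma: for every $2$-cell $h : u \dfl v$ of $\tck{\Upsilon}$ there exists a $3$-cell
\[
\tau_h \::\: FG(h) \star_1 \tau_v \:\tfl\: \tau_u \star_1 h
\]
in the free $(3,1)$-category generated by $(\Upsilon, \Delta)$. I would construct $\tau_h$ by induction on the structure of $h$ as a $2$-cell of the free $(2,1)$-category $\tck{\Upsilon}$. For $h = 1_u$, the $2$-functoriality of $FG$ and the identity laws reduce both sides to $\tau_u$, so $\tau_{1_u}$ can be taken to be an identity $3$-cell. For a whiskered generator $h = w\alpha w'$ with $\alpha \in \Upsilon_2$, the $\star_0$-functoriality of $\tau$ expands $\tau_{wvw'} = \tau_w \star_0 \tau_v \star_0 \tau_{w'}$, and the interchange law rewrites both sides as a horizontal composite of shape $\tau_w \star_0 (\,\cdot\,) \star_0 \tau_{w'}$, whose inner parts are bridged by the given $3$-cell $\tau_\alpha$. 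For a vertical composite $h = k \star_1 l$, I would stack $\tau_k$ and $\tau_l$ via $\star_1$-whiskering and $\star_2$-composition. For an inverse $h = \alpha^-$ of a generating $2$-cell, I would derive $\tau_{\alpha^-}$ from $\tau_\alpha$ by conjugation with $\alpha^-$ on both sides, exploiting the invertibility of $2$-cells in $\tck{\Upsilon}$ and of $3$-cells in the target.

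With the lemma in hand, the theorem follows. Given parallel $2$-cells $f, g : u \dfl v$ of $\tck{\Upsilon}$, the images $G(f), G(g)$ are parallel in $\tck{\Sigma}$, so the homotopy basis $\Gamma$ yields a $3$-cell $A : G(f) \tfl G(g)$, and the extended $F$ gives $F(A) : FG(f) \tfl FG(g)$. Whiskering $\tau_f$ and $\tau_g$ on the left by the $\star_1$-inverse $\tau_u^-$ (which exists since $\tck{\Upsilon}$ is a $(2,1)$-category) produces $3$-cells $\tau_u^- \star_1 FG(f) \star_1 \tau_v \tfl f$ and $\tau_u^- \star_1 FG(g) \star_1 \tau_v \tfl g$; the desired $3$-cell from $f$ to $g$ is then the $\star_2$-composite
\[
f \:\tfl\: \tau_u^- \star_1 FG(f) \star_1 \tau_v \:\overset{\tau_u^- \star_1 F(A) \star_1 \tau_v}{\tfl}\: \tau_u^- \star_1 FG(g) \star_1 \tau_v \:\tfl\: g,
\]
where the outer arrows come from $\star_2$-inverting the two $3$-cells just constructed. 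The main obstacle is the inductive construction of $\tau_h$: each case is conceptually straightforward, but verifying it requires a careful manipulation of whiskerings and the interchange law, and the inverse case in particular forces us to use the full $(2,1)$- and $(3,1)$-structures. The subtlest point is ensuring that the construction respects the defining relations of $\tck{\Upsilon}$, so that $\tau_h$ depends only on the $2$-cell $h$ and not on a chosen expression for it.
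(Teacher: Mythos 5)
Your proposal follows essentially the same route as the paper: you construct the same key $3$-cells $\tau_h : FG(h)\star_1\tau_v \tfl \tau_u\star_1 h$ by structural induction (identities, horizontal/vertical composites, inverses), note the same well-definedness issue with respect to the exchange relations, and assemble the final $3$-cell from $f$ to $g$ by conjugating $F(A)$ with $\tau_f$ and $\tau_g$ exactly as the paper does. The argument is correct.
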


\begin{proof}
Let us define, for every $2$-cell~$f$ of~$\tck{\Xi}$, a $3$-cell~$\tau_{f}$ of~$\tck{\Delta}$ with shape
\[
\xymatrix @!C @C=2.5em @R=1em{
& FG(v)
	\ar@2 @/^/ [dr] ^-{\tau_v}
\ar@3 []!<0pt,-15pt>;[dd]!<0pt,15pt> ^-{\tau_f}
\\
FG(u) 
	\ar@2 @/^/ [ur] ^-{FG(f)}
	\ar@2 @/_/ [dr] _-{\tau_u}
&& v
\\
& u
	\ar@2 @/_/ [ur] _-{f}
}
\]
We extend the notation~$\tau_{\alpha}$ in a functorial way, according to the formulas $
\tau_{1_u} = 1_{\tau_u}$, $\tau_{fg} = \tau_f \tau_g$,
\[
\tau_{f\star_1 g} \:=\quad 
\vcenter{\xymatrix @!C @C=2.5em @R=1em{
&& FG(w)
	\ar@2 @/^/ [dr] ^-{\tau_w}
\ar@3 []!<0pt,-15pt>;[dd]!<0pt,15pt> ^-{\tau_g}
\\
& FG(v)
	\ar@2 @/^/ [ur] ^-{FG(g)}
	\ar@2 [dr] |-*+{\tau_v}
\ar@3 []!<0pt,-15pt>;[dd]!<0pt,15pt> ^-{\tau_f}
&& w
\\
FG(u) 
	\ar@2 @/^/ [ur] ^-{FG(f)}
	\ar@2 @/_/ [dr] _-{\tau_u}
&& v
	\ar@2 @/_/ [ur] _-{g}
\\
& u
	\ar@2 @/_/ [ur] _-{f}
}}
\]
and
\[
\tau_{f^-} \:=\quad 
\vcenter{\xymatrix @!C @C=2.5em @R=1em{
&& u
	\ar@2 @/^/ [dr] ^-{f}
\ar@3 []!<0pt,-15pt>;[dd]!<0pt,15pt> ^-{\tau_f^-}
\\
FG(v) 
	\ar@2 [r] ^-{FG(f)^-}
& FG(u) 
	\ar@2 @/^/ [ur] ^-{\tau_u}
	\ar@2 @/_/ [dr] _-{FG(f)}
&& v
	\ar@2 [r] ^-{f^-}
& u
\\
&& FG(v)
	\ar@2 @/_/ [ur] _-{\tau_v}
}}
\]

One checks that the $3$-cells~$\tau_f$ are well-defined, i.e.\ that their definition is compatible with the relations on $2$-cells of~$\tck{\Xi}$, such as the exchange relation:
\[
\tau_{fg\star_1 hk} \:=\: \tau_{(f\star_1 h)(g\star_1 k)}.
\]
Now, let us consider parallel $2$-cells $f,g:u\dfl v$ of~$\tck{\Xi}$. The $2$-cells~$G(f)$ and~$G(g)$ of~$\tck{\Sigma}$ are parallel so that, since~$\Gamma$ is a homotopy basis of~$\tck{\Sigma}$, there exists a $3$-cell 
\[
\xymatrix @C=5em{
G(u)
	\ar@2 @/^4ex/ [r] ^-{G(f)} _-{}="src"
	\ar@2 @/_4ex/ [r] _-{G(g)} ^-{}="tgt"
\ar@3 "src"!<0pt,-10pt>;"tgt"!<0pt,10pt> ^-{A}
& G(v)
}
\]
in~$\tck{\Gamma}$. An application of~$F$ to~$A$ gives the $3$-cell
\[
\xymatrix @C=5em{
FG(u)
	\ar@2 @/^4ex/ [r] ^-{FG(f)} _-{}="src"
	\ar@2 @/_4ex/ [r] _-{FG(g)} ^-{}="tgt"
\ar@3 "src"!<0pt,-10pt>;"tgt"!<0pt,10pt> ^-{F(A)}
& FG(v)
}
\]
of~$\tck{\Xi}$, which, by definition of~$\Delta$ and functoriality of~$F$, is in~$\tck{\Delta}$. Using the $3$-cells~$F(A)$, $\tau_f$ and~$\tau_g$, we get the following $3$-cell from~$f$ to~$g$ in~$\tck{\Delta}$:
\[
\xymatrix @!C @C=4em {
{\sm u}
	\ar@2 @/^12ex/ [rrr] ^(0.75){f} _{}="src1"
	\ar@2 [r] ^-{\tau_u^-} 
	\ar@2 @/_12ex/ [rrr] _(0.75){g} ^{}="tgt3"
& {\sm FG(u)}
	\ar@2 @/^4ex/ [r] ^(0.8){FG(f)} ^{}="tgt1" _{}="src2"
	\ar@2 @/_4ex/ [r] _(0.8){FG(g)} ^{}="tgt2" _{}="src3"
& {\sm FG(v)}
	\ar@2 [r] ^-{\tau_v}
& {\sm v}
		\ar@3 "src1"!<-15pt,-10pt>;"tgt1"!<-15pt,10pt> ^-{\tau_u^-\star_1\tau_f^-}
		\ar@3 "src2"!<-5pt,-10pt>;"tgt2"!<-5pt,10pt> ^-{F(A)}
		\ar@3 "src3"!<-15pt,-10pt>;"tgt3"!<-15pt,10pt> ^-{\tau_u^-\star_1\tau_g}
}
\]
This concludes the proof that $\Delta=F(\Gamma) \amalg \tau_{\Xi}$ is a homotopy basis of the $(2,1)$-category~$\tck{\Xi}$.
\end{proof}

We deduce from Theorem~\ref{Theorem:HomotopyBasesTransfer} the following result.

\begin{theorem}[{\cite[Theorem~4.3]{Squier94}}]
\label{Theorem:Squier2}
Let~$\Sigma$ and~$\Xi$ be finite presentations of the same category. Then~$\Sigma$ is of finite derivation type if, and only if, $\Xi$ is of finite derivation type.
\end{theorem}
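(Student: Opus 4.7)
The statement is essentially a corollary of Theorem~\ref{Theorem:HomotopyBasesTransfer}, so my plan is to reduce it to an application of that theorem together with a cardinality check. By symmetry in $\Sigma$ and $\Upsilon$, it suffices to prove only one direction: assuming $\Sigma$ is of finite derivation type and $\Upsilon$ is finite, I will exhibit a finite homotopy basis of $\tck{\Upsilon}$.

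The first step is to invoke Lemma~\ref{Lemma:TietzeEquivalence} on the two presentations $\Sigma$ and $\Upsilon$ of the common category $\C$, producing $2$-functors $F:\tck{\Sigma}\fl\tck{\Upsilon}$ and $G:\tck{\Upsilon}\fl\tck{\Sigma}$ together with the families of $2$-cells $\sigma_u$ and $\tau_v$. Since $\Sigma$ is of finite derivation type, by definition the $(2,1)$-category $\tck{\Sigma}$ admits a finite homotopy basis $\Gamma$. I then plug $\Gamma$, $F$, $G$ and $\tau$ into Theorem~\ref{Theorem:HomotopyBasesTransfer} to obtain the homotopy basis
\[
\Delta \:=\: F(\Gamma) \:\sqcup\: \tau_{\Upsilon}
\]
of $\tck{\Upsilon}$.

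The second, and only remaining, step is the cardinality check. The cellular extension $F(\Gamma)$ contains exactly one $3$-cell $F(\gamma)$ for each $\gamma\in\Gamma$, so $|F(\Gamma)|=|\Gamma|$ is finite by hypothesis on $\Sigma$. The cellular extension $\tau_{\Upsilon}$ contains exactly one $3$-cell $\tau_{\alpha}$ for each generating $2$-cell $\alpha$ of $\Upsilon$, so $|\tau_{\Upsilon}|=|\Upsilon_2|$ is finite because $\Upsilon$ is a finite $2$-polygraph. Hence $\Delta$ is a finite homotopy basis of $\tck{\Upsilon}$, and since $\Upsilon$ is itself finite, $\Upsilon$ is of finite derivation type.

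There is no real obstacle, since all the technical content has been absorbed into Theorem~\ref{Theorem:HomotopyBasesTransfer} (and Lemma~\ref{Lemma:TietzeEquivalence} before it): the cornerstone is the functorial construction of the transferred basis, whose size is controlled purely by the sizes of $\Gamma$ and of $\Upsilon_2$. The only thing worth being careful about is the symmetry argument at the outset, which requires that the roles of $\Sigma$ and $\Upsilon$ in Lemma~\ref{Lemma:TietzeEquivalence} can indeed be interchanged; this is immediate since the lemma is stated symmetrically in the two presentations.
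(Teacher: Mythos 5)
Your proof is correct and is exactly the route the paper takes: it deduces the theorem from Theorem~\ref{Theorem:HomotopyBasesTransfer} (the paper leaves the deduction implicit, while you spell out the finiteness of $F(\Gamma)$ and of $\tau_{\Upsilon}$, which is the only content to check). The symmetry remark and the observation that $\abs{\tau_{\Upsilon}}=\abs{\Upsilon_2}$ is finite because $\Upsilon$ is a finite $2$-polygraph are precisely what is needed.
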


The following proposition is useful to prove that a presentation admits no finite homotopy basis.

\begin{proposition}
\label{Proposition:FiniteBasisExtracted}
Let~$\Sigma$ be a $2$-polygraph and let~$\Gamma$ be a homotopy basis of~$\tck{\Sigma}$. If~$\Sigma$ admits a finite homotopy basis, then there exists a finite subset of~$\Gamma$ that is a homotopy basis of~$\tck{\Sigma}$.
\end{proposition}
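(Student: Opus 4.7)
The plan is to use the finite homotopy basis to extract a finite subset from $\Gamma$ by a compactness-style argument: each $3$-cell in a free $(3,1)$-category uses only finitely many generators.

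Let $\Gamma' = \{\gamma'_1, \ldots, \gamma'_n\}$ denote a finite homotopy basis of $\tck{\Sigma}$, with $\gamma'_i : f_i \tfl g_i$. Since $\Gamma$ is also a homotopy basis of $\tck{\Sigma}$ and each pair $(f_i, g_i)$ is parallel in $\tck{\Sigma}_2$, there exists, for every $i \in \{1, \ldots, n\}$, a $3$-cell $A_i : f_i \tfl g_i$ in the free $(3,1)$-category $\tck{(\Sigma_2, \Gamma)}$. By the explicit description of free $(3,1)$-categories recalled in the excerpt, each $A_i$ is a formal composition, via $\star_0$, $\star_1$ and $\star_2$, of finitely many generators of $\Gamma$, their inverses, and identities of $2$-cells. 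Let $\Gamma_i \subseteq \Gamma$ be the finite set of generators of $\Gamma$ occurring in this expression of $A_i$, and set
\[
\Gamma_0 \:=\: \Gamma_1 \cup \cdots \cup \Gamma_n,
\]
which is a finite subset of $\Gamma$.

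I would then verify that $\Gamma_0$ is a homotopy basis of $\tck{\Sigma}$. By construction, every $A_i$ is a $3$-cell of the subcategory $\tck{(\Sigma_2, \Gamma_0)}$, so $f_i \equiv_{\Gamma_0} g_i$ for each $i$, which means that the congruence $\equiv_{\Gamma_0}$ on parallel $2$-cells of $\tck{\Sigma}_2$ contains every generating relation of $\equiv_{\Gamma'}$. By minimality of the congruence generated by $\Gamma'$, this gives the inclusion $\equiv_{\Gamma'} \;\subseteq\; \equiv_{\Gamma_0}$. But $\Gamma'$ is a homotopy basis, so $\equiv_{\Gamma'}$ already identifies all pairs of parallel $2$-cells, hence so does $\equiv_{\Gamma_0}$. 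Therefore $\Gamma_0$ is a homotopy basis of $\tck{\Sigma}$, finite and contained in $\Gamma$, as required.

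There is no genuine obstacle here: the only point to be careful about is the finite-support property for $3$-cells of the free $(3,1)$-category, which is immediate from the fact that such $3$-cells are constructed as finite formal composites. Everything else reduces to the general observation that if one homotopy basis is absorbed into the congruence generated by a set of cells, that set is itself a homotopy basis.
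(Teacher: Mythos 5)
Your proof is correct and follows essentially the same route as the paper: pick, for each cell of the finite basis, a $3$-cell of $\tck{(\Sigma_2,\Gamma)}$ filling its boundary, collect the finitely many generators of $\Gamma$ it uses, and conclude that this finite subset already forces all identifications. The only cosmetic difference is that the paper packages the last step as an explicit $3$-functor $\tck{\Delta}\fl\tck{\Gamma}$ sending each basis cell to its chosen filler, whereas you invoke minimality of the congruence generated by the finite basis; the two arguments are interchangeable.
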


\begin{proof}
Let~$\Delta$ be a finite homotopy basis of~$\Sigma$ and let~$\delta$ be a $3$-cell of~$\Delta$. Since~$\Gamma$ is a homotopy basis of~$\Sigma$, there exists a $3$-cell~$A_{\delta}$ in~$\tck{\Gamma}$ with boundary~$(s(\delta),t(\delta))$. This induces a $3$-functor 
\[
F \::\: \tck{\Delta} \:\fl\: \tck{\Gamma}
\] 
that is the identity on~$\Sigma$ and such that $F(\delta)=A_{\delta}$ for every $3$-cell~$\delta$ of~$\Delta$. Let~$\Gamma_{\Delta}$ be the subset of~$\Gamma$ that contains all the generating $3$-cells that appear in the $3$-cells~$A_{\delta}$, for every~$\delta$ in~$\Delta$. Since~$\Delta$ is finite and each~$A_{\delta}$ contains finitely many $3$-cells of~$\Gamma$, we have that~$\Gamma_{\Delta}$ is finite. Finally, let us consider a $2$-sphere~$(f,g)$ of~$\tck{\Sigma}$. By hypothesis, there exists a $3$-cell~$A$ in~$\tck{\Delta}$ with boundary~$(f,g)$. By application of~$F$, one gets a $3$-cell~$F(A)$ in~$\tck{\Gamma}$ whose boundary is~$(f,g)$. Moreover, the $3$-cell~$F(A)$ is a composite of cells~$A_{\delta}$: hence, the $3$-cell~$F(A)$ is in~$\tck{\Gamma}_{\Delta}$. As a consequence, one gets~$f\equiv_{\Gamma_{\Delta}}g$, so that~$\Gamma_{\Delta}$ is a finite homotopy basis of~$\tck{\Sigma}$, which concludes the proof.
\end{proof}

\subsection{Squier completion for convergent presentations}
\label{Section:SquierCompletion}

Squier completion provides a way to extend a convergent presentation of a category~$\C$ into a coherent presentation of~$\C$. 

\subsubsection{Squier completion}

For~$\Sigma$ a $2$-polygraph, a \emph{family of generating confluences of~$\Sigma$} is a cellular extension of~$\tck{\Sigma}$ that contains exactly one $3$-cell  
\[
\xymatrix @R=1em@C=3em @!C{
& {v}
	\ar@2 @/^/ [dr] ^{f'}
	\ar@3 []!<0pt,-15pt>;[dd]!<0pt,15pt> 
\\
{u}
	\ar@2 @/^/ [ur] ^{f}
	\ar@2 @/_/ [dr] _{g}
&& {u'}
\\
& {w}
	\ar@2 @/_/ [ur] _{g'}
}
\]
for every critical branching~$(f,g)$ of~$\Sigma$. We note that, if~$\Sigma$ is confluent, it always admits a family of generating confluences. However, such a family is not necessarily unique, since the $3$-cell can be directed in the reverse way and, for a given branching~$(f,g)$, we can have several possible $2$-cells~$f'$ and~$g'$ with the required shape. Normalisation strategies provide a deterministic way to construct a family of generating confluences, see~\cite[4.3.2]{GuiraudMalbos12advances}.

For a convergent $2$-polygraph~$\Sigma$, \emph{Squier completion of~$\Sigma$} is the $(3,1)$-polygraph denoted by~$\Sr(\Sigma)$ and defined by $\Sr(\Sigma)=(\Sigma,\Gamma)$, where~$\Gamma$ is a chosen family of generating confluences of~$\Sigma$. By the following result, if~$\Sigma$ is a convergent presentation of a category~$\C$, then Squier completion~$\Sr(\Sigma)$ is a coherent presentation of~$\C$.

\begin{theorem}[{\cite[Theorem~5.2]{Squier94}}]
\label{Theorem:Squier1}
Let $\Sigma$ be a convergent $2$-polygraph. Every family of generating confluences of~$\Sigma$ is a homotopy basis of~$\tck{\Sigma}$.
\end{theorem}

\begin{proof}
We fix a family of generating confluences of~$\Sigma$ and consider the corresponding Squier completion~$\Sr(\Sigma)$. We proceed in three steps. 

\medskip\noindent
\textbf{Step~1.} 
We prove that, for every local branching $(f,g):u\dfl(v,w)$ of~$\Sigma$, there exist $2$-cells $f':v\dfl u'$ and $g':w\dfl u'$ in~$\Sigma^*$ and a $3$-cell $A:f\star_1 f'\tfl g\star_1 g'$ in~$\tck{\Sr(\Sigma)}$, as in the following diagram:
\[
\xymatrix @R=1em@C=3em @!C{
& {v}
	\ar@2 @/^/ [dr] ^{f'}
	\ar@3 []!<0pt,-15pt>;[dd]!<0pt,15pt> ^*+{A}
\\
{u}
	\ar@2 @/^/ [ur] ^{f}
	\ar@2 @/_/ [dr] _{g}
&& {u'}
\\
& {w}
	\ar@2 @/_/ [ur] _{g'}
}
\]
As we have seen in the study of confluence of local branchings, in the case of an aspherical or Peiffer branching, we can choose~$f'$ and~$g'$ such that $f\star_1 f'=g\star_1 g'$: an identity $3$-cell is enough to link them. Moreover, if we have an overlapping branching~$(f,g)$ that is not critical, we have $(f,g)=(uhv,ukv)$ with~$(h,k)$ critical. We consider the $3$-cell $\alpha:h\star_1 h'\tfl k\star_1 k'$ of~$\Sr(\Sigma)$ corresponding to the critical branching~$(h,k)$ and we conclude that the following $2$-cells~$f'$ and~$g'$ and $3$-cell~$A$ satisfy the required conditions:
\[
f' \:=\: uh'v
\qquad
g' \:=\: uk'v
\qquad
A \:=\: u\alpha v.
\]

\medskip\noindent
\textbf{Step~2.} 
We prove that, for every parallel $2$-cells~$f$ and~$g$ of~$\Sigma^*$ whose common target is a normal form, there exists a $3$-cell from~$f$ to~$g$ in~$\tck{\Sr(\Sigma)}$. We proceed by noetherian induction on the common source~$u$ of~$f$ and~$g$, using the termination of~$\Sigma$. Let us assume that~$u$ is a normal form: then, by definition, both $2$-cells~$f$ and~$g$ must be equal to the identity of~$u$, so that $1_{1_u}:1_u\tfl 1_u$ is a $3$-cell of~$\tck{\Sr(\Sigma)}$ from~$f$ to~$g$. 

Now, let us fix a $1$-cell~$u$ of~$\Sigma^*$ with the following property: for every $1$-cell~$v$ of~$\Sigma^*$ such that~$u$ rewrites into~$v$ in one step, and for every parallel $2$-cells $f,g:v\dfl\rep{v}=\rep{u}$ of~$\Sigma^*$, there exists a $3$-cell from~$f$ to~$g$ in~$\tck{\Sr(\Sigma)}$. Let us consider parallel $2$-cells $f,g:u\dfl\rep{u}$ and let us prove the result by progressively constructing the following composite $3$-cell from~$f$ to~$g$ in~$\tck{\Sr(\Sigma)}$:
\[
\xymatrix@R=2.5em{
&& {u_1}
	\ar@2[dr] |-{f'_1}
	\ar@2 @/^2ex/[rrrd] ^-{f_2} _-{}="5"
        \ar@3 []!<-10pt,-30pt>;[dd]!<-10pt,30pt> ^*+{A}
\\
{u}
	\ar@2 @/^14ex/[rrrrr] ^-{f} ^-{}="1"
	\ar@2 @/_14ex/[rrrrr] _-{g} _-{}="3"
	\ar@2[urr] |-{f_1}
	\ar@2[drr] |-{g_1}
&&& {u'}
	\ar@2[rr] |-{h}
&& {\rep{u}}
\\
&& {v_1}
	\ar@2[ru] |-{g'_1}
	\ar@2 @/_2ex/[rrru] _-{g_2} ^-{}="8"
\ar@2{} "1";"1,3"!<17.5pt,2.5pt> |{=}
\ar@2{} "3";"3,3"!<17.5pt,-2.5pt> |{=}
\ar@3 "5"!<-9.55pt,-6.5pt>;"2,4"!<8pt,6.5pt> ^*+{B}
\ar@3 "2,4"!<8pt,-9.5pt>;"8"!<-9.2pt,4.5pt> ^*+{C}
}
\]
Since~$u$ is not a normal form, we can decompose $f=f_1\star_1 f_2$ and $g=g_1\star_1 g_2$ so that~$f_1$ and~$g_1$ are rewriting steps. They form a local branching~$(f_1,g_1)$ and we build the $2$-cells~$f_1'$ and~$g_1'$ of~$\Sigma^*$ together with the $3$-cell~$A$ of~$\tck{\Sr(\Sigma)}$, as in the first part of the proof. Then, we consider a $2$-cell~$h:u'\dfl\rep{u}$ in~$\Sigma^*$, that must exist by confluence of~$\Sigma$ and since~$\rep{u}$ is a normal form. We apply the induction hypothesis to the parallel $2$-cells~$f_2$ and~$f'_1\star_1 h$ in order to get~$B$ and, symmetrically, to the parallel $2$-cells~$g'_1\star_1 h$ and~$g_2$ to get~$C$.

\medskip\noindent
\textbf{Step 3.} We prove that every $2$-sphere of~$\tck{\Sigma}$ is the boundary of a $3$-cell of~$\tck{\Sr(\Sigma)}$. First, let us consider a $2$-cell $f:u\dfl v$ in~$\Sigma^*$. Using the confluence of~$\Sigma$, we choose $2$-cells
\[
\sigma_u \::\: u \:\dfl\: \rep{u}
\qquad\text{and}\qquad
\sigma_v \::\: v\:\dfl\: \rep{v} = \rep{u}
\]
in~$\Sigma^*$. By construction, the $2$-cells~$f\star_1\sigma_v$ and~$\sigma_u$ are parallel and their common target~$\rep{u}$ is a normal form. Thus, there exists a $3$-cell in~$\tck{\Sr(\Sigma)}$ from $f\star_1\sigma_v$ to~$\sigma_u$ or, equivalently, a $3$-cell~$\sigma_f$ from~$f$ to $\sigma_u\star_1\sigma_v^-$ in~$\tck{\Sr(\Sigma)}$, as in the following diagram:
\[
\xymatrix@C=2em @!C @R=1.5em {
u 
	\ar@2@/^3ex/ [rr] ^*+{f} _{}="src"
	\ar@2@/_/ [dr] _{\sigma_u}
&& v
\\	
& {\rep{u}}
	\ar@2@/_/ [ur] _{\sigma_v^-} 
		\ar@3 "src"!<0pt,-10pt>;[]!<0pt,15pt> ^*+{\sigma_f}
}
\]
Moreover, the $(3,1)$-category~$\tck{\Sr(\Sigma)}$ contains a $3$-cell~$\sigma_{f^-}$ from~$f^-$ to $\sigma_v\star_1\sigma_u^-$, given as the following composite:
\[
\xymatrix@C=2em @!C @R=1.5em {
&& {\rep{u}}
	\ar@2@/^/ [dr] ^{\sigma_v^-} 
\\
v 
	\ar@2[r] ^{f^-}
& u
	\ar@2@/_3ex/ [rr] _*+{f} _{}="tgt"
	\ar@2@/^/ [ur] ^{\sigma_u}
&& v
	\ar@2[r] ^{\sigma_v}
& {\rep{u}}
	\ar@2[r] ^{\sigma_u^-}
& u
\ar@3 "1,3"!<0pt,-15pt>;"tgt"!<0pt,10pt> ^*+{\sigma_f^-}
}
\]
Now, let us consider a general $2$-cell $f:u\dfl v$ of~$\tck{\Sigma}$. By construction of~$\tck{\Sigma}$, the $2$-cell~$f$ can be decomposed (in general in a non-unique way) into a zigzag
\[
\xymatrix @C=2em {
u 
	\ar@2[r] ^-{f_1}
& v_1
	\ar@2[r] ^-{g_1^-}
& u_2 
	\ar@2[r] ^-{f_2}
& (\cdots)
	\ar@2[r] ^-{g_{n-1}^-}
& u_n
	\ar@2[r] ^-{f_n}
& v_n 
	\ar@2[r] ^-{g_n^-}
& v
}
\]
where each~$f_i$ and~$g_i$ is a $2$-cell of~$\Sigma^*$. We define~$\sigma_f$ as the following composite $3$-cell of~$\tck{\Sr(\Sigma)}$, with source~$f$ and target $\sigma_u\star_1\sigma_v^-$: 
\[
\xymatrix @C=2em @!C{
u 
	\ar@2[rr] ^{f_1} _{}="f1"
	\ar@2@/_/ [dr] _{\sigma_{u}}
&& v_1
	\ar@2[rr] ^{g_1^-} _{}="g1"
	\ar@2[dr] |{\sigma_{v_1}}
&& (\cdots)
	\ar@2[rr] ^{f_n} _{}="fn"
	\ar@2[dr] |{\sigma_{u_n}}
&& v_n
	\ar@2[rr] ^{g_n^-} _{}="gn"
	\ar@2[dr] |{\sigma_{v_n}}
&& v
\\
& {\rep{u}}
	\ar@2[ur] |{\sigma_{v_1}^-}
	\ar@{=} [rr] ^{}="id1"
&& {\rep{u}}
	\ar@2[ur] |{\sigma_{u_2}^-}
	\ar@{=} [r] 
& (\cdots)
	\ar@{=} [r] 
& {\rep{u}}
	\ar@2[ur] |{\sigma_{v_n}^-}
	\ar@{=} [rr] ^{}="idn"
&& {\rep{u}}
	\ar@2@/_/ [ur] _{\sigma_{v}^-}
\ar@3 "f1"!<-5pt,-10pt>;"2,2"!<-5pt,15pt> ^{\sigma_{f_1}}
\ar@3 "g1"!<-5pt,-10pt>;"2,4"!<-5pt,15pt> ^{\sigma_{g_1^-}}
\ar@3 "fn"!<-5pt,-10pt>;"2,6"!<-5pt,15pt> ^{\sigma_{f_n}}
\ar@3 "gn"!<-5pt,-10pt>;"2,8"!<-5pt,15pt> ^{\sigma_{g_n^-}}
\ar@{} "1,3"!<0pt,-15pt>;"id1"!<0pt,10pt> |{=}
\ar@{} "1,7"!<0pt,-15pt>;"idn"!<0pt,10pt> |{=}
}
\]
We proceed similarly for any other $2$-cell $g:u\dfl v$ of~$\tck{\Sigma}$, to get a $3$-cell~$\sigma_g$ from~$g$ to $\sigma_u\star_1\sigma_v^-$ in~$\tck{\Sr(\Sigma)}$. Thus, the composite $\sigma_f\star_2\sigma_g^-$ is a $3$-cell of~$\tck{\Sr(\Sigma)}$ from~$f$ to~$g$, concluding the proof.
\end{proof}

Theorem~\ref{Theorem:Squier1} is extended to higher-dimensional polygraphs in~\cite[Proposition~4.3.4]{GuiraudMalbos09}. In the special case of presentations of monoids, we recover the original result of Squier.

\begin{theorem}[{\cite[Theorem~5.3]{Squier94}}]
\label{Theorem:Squier}
If a monoid admits a finite convergent presentation, then it is of finite derivation type.
\end{theorem}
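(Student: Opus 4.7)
The plan is to derive this statement directly from Theorem~\ref{Theorem:Squier1} together with the observation that a finite $2$-polygraph has only finitely many critical branchings. Let $\Sigma$ be a finite convergent presentation of the monoid~$\M$, viewed as a $2$-polygraph with a single $0$-cell. By Theorem~\ref{Theorem:Squier1}, Squier's completion $\Sr(\Sigma)=(\Sigma,\Gamma)$ is a coherent presentation of~$\M$, where $\Gamma$ is any family of generating confluences, containing exactly one $3$-cell per critical branching of~$\Sigma$. Thus $\Gamma$ is a homotopy basis of $\tck{\Sigma}$, and it remains to observe that $\Gamma$ can be chosen to be finite.

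The key point is that a finite $2$-polygraph admits only finitely many critical branchings. Indeed, a critical branching is an overlapping local branching $(\alpha v, u\beta)$ (or a similar minimal overlap) arising from two generating $2$-cells $\alpha:u_\alpha\dfl v_\alpha$ and $\beta:u_\beta\dfl v_\beta$ of $\Sigma_2$ whose sources share a common factor at a specified position, minimally with respect to $\preccurlyeq$. Since $\Sigma_2$ is finite, there are only finitely many ordered pairs $(\alpha,\beta)$, and for each such pair the possible minimal overlaps of the finite words $u_\alpha$ and $u_\beta$ form a finite set (roughly, overlaps of a suffix of $u_\alpha$ with a prefix of $u_\beta$, plus inclusions of one source inside the other). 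Hence the set of critical branchings of $\Sigma$ is finite, and so $\Gamma$ is a finite cellular extension of $\tck{\Sigma}$.

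Putting these observations together: $\Sigma$ is finite and $\tck{\Sigma}$ admits the finite homotopy basis~$\Gamma$, so by definition $\Sigma$ is a finite presentation of finite derivation type. Therefore the monoid~$\M$ it presents is of finite derivation type. The main (and essentially only) obstacle is the combinatorial check that critical branchings are finite; this is routine for string rewriting but deserves to be stated explicitly since it is the only place where finiteness of $\Sigma$ actually enters. Everything else is a direct invocation of Theorem~\ref{Theorem:Squier1}, and the extension from categories with one $0$-cell to arbitrary monoids is immediate via the correspondence between monoids and one-object categories recalled at the start of Section~\ref{Section:twoDimensionalPolygraphs}.
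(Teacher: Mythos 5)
Your proof is correct and takes essentially the same route as the paper: Theorem~\ref{Theorem:Squier1} shows that Squier's completion is a coherent presentation, and the finiteness of the set of critical branchings of a finite $2$-polygraph makes the chosen family of generating confluences a finite homotopy basis. Your explicit combinatorial check that a finite string rewriting system has only finitely many critical branchings is the one point the paper leaves implicit, and it is stated correctly.
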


\subsubsection{Generating confluences in the reduced case}

Theorem~\ref{Theorem:Squier1} holds for any choice of family of generating confluences. If~$\Sigma$ is a reduced convergent $2$-polygraph, we can construct an explicit such family as follows. Let~$\sigma$ be the leftmost normalisation strategy of~$\Sigma$. Since~$\Sigma$ is reduced, every critical branching of~$\Sigma$ has the form
\[
\xymatrix{
\strut 
	\ar[r] _-{\rep{u}}
	\ar@/^6ex/ [rrr] ^{u'} _-{}="t1"
& \strut
	\ar[rr] ^-{\rep{w}} _(0.75){}="s2" ^(0.25){}="s1"
	\ar@/_6ex/ [rrr] _{v'} ^-{}="t2"
&& \strut
	\ar[r] ^-{\rep{v}}
& \strut
\ar@2 "s1"!<0pt,7.5pt>;"t1"!<0pt,-7.5pt> ^-{\alpha}
\ar@2 "s2"!<0pt,-7.5pt>;"t2"!<0pt,7.5pt> ^-{\beta}
}
\]
where~$\alpha$ and~$\beta$ are $2$-cells of~$\Sigma$ and where~$\rep{u}$, $\rep{w}$ and~$\rep{v}$ are non-identity normal forms. Let us note that~$\alpha\rep{v}$ is the leftmost reduction step of~$\rep{u}\rep{w}\rep{v}$ and that~$\rep{u}\beta$ is its rightmost reduction step. In particular, we have 
\[
\sigma(\rep{u}\rep{w}\rep{v}) \:=\: \alpha\rep{v} \star_1 \sigma(u'\rep{v}).
\]
We define~$\Sigma_3$ as the cellular extension of~$\tck{\Sigma}$ made of one $3$-cell with the following shape, for every critical branching $b=(\alpha\rep{v},\rep{u}\beta)$ of~$\Sigma$:
\[
\xymatrix @!C @C=2em @R=2em {
& {\rep{u}v'}
	\ar@2@/^/ [dr] ^-{\sigma(\rep{u}v')}
\\
{\rep{u}\rep{w}\rep{v}}
	\ar@2@/^/ [ur] ^-{\rep{u}\beta}
	\ar@2@/_/ [rr] _-{\sigma(\rep{u}\rep{w}\rep{v})} ^-{}="tgt"
&& {\rep{uwv}}
\ar@3 "1,2"!<-5pt,-20pt>;"tgt"!<-5pt,15pt> ^-{\omega_b}
}
\]

\begin{example}

The \emph{standard presentation} of a category~$\C$ is the $2$-polygraph~$\Std_2(\C)$ defined as follows. The $0$-cells and $1$-cells of~$\Std_2(\C)$ are the ones of~$\C$, with~$\rep{u}$ denoting a $1$-cell~$u$ of~$\C$ when seen as a $1$-cell of~$\Std_2(\C)$. The $2$-polygraph~$\Std_2(\C)$ contains a $2$-cell 
\[
\xymatrix{
& y
	\ar@/^/ [dr] ^-{\rep{v}}
\\
x
	\ar@/^/ [ur] ^-{\rep{u}}
	\ar@/_/ [rr] _-{\rep{uv}} _-{}="tgt"
&& z
\ar@2 "1,2"!<0pt,-15pt>;"tgt"!<0pt,10pt> ^-{\gamma_{u,v}}	
}
\]
for all $1$-cells $u:x\fl y$ and $v:y\fl z$ of~$\C$, and a $2$-cell
\[
\xymatrix@C=3em{
x 
	\ar@/^3ex/ [r] ^-{1_x} ^-{}="src"
	\ar@/_3ex/ [r] _-{\rep{1}_x} _-{}="tgt"
& x
\ar@2 "src"!<0pt,-10pt>;"tgt"!<0pt,10pt> ^-{\iota_x}	
}
\]
for every $0$-cell~$x$ of~$\C$. The \emph{standard coherent presentation of~$\C$} is the $(3,1)$-polygraph denoted by~$\Std_3(\C)$ and obtained by extension of~$\Std_2(\C)$ with the homotopy basis made of the following $3$-cells: 
\begin{itemize}
\item for all $1$-cells $u:x\fl y$, $v:y\fl z$ and $w:z\fl t$ of~$\C$, one $3$-cell
\[
\xymatrix @C=3em @R=1em {
& {\rep{uv}\rep{w}}
	\ar@2 @/^/ [dr] ^-{\gamma_{uv,w}}
\\
{\rep{u}\rep{v}\rep{w}}
	\ar@2 @/^/ [ur] ^-{\gamma_{u,v}\rep{w}}
	\ar@2 @/_/ [dr] _-{\rep{u}\gamma_{v,w}}
&& {\rep{uvw}}
\\
& {\rep{u}\rep{vw}}
	\ar@2 @/_/ [ur] _-{\gamma_{u,vw}}
\ar@3 "1,2"!<0pt,-15pt>;"3,2"!<0pt,15pt> ^-{\alpha_{u,v,w}}
}
\]
\item for every $1$-cell $u:x\fl y$ of~$\C$, two $3$-cells
\[
\xymatrix{
& {\rep{1}_x\rep{u}}
	\ar@2@/^/ [dr] ^-{\gamma_{1_x,u}}
\\
{\rep{u}}
	\ar@2@/^/ [ur]  ^{\iota_x \rep{u}}
	\ar@2{=}@/_/ [rr] _-{}="tgt"
&& {\rep{u}}
\ar@3 "1,2"!<0pt,-15pt>;"tgt"!<0pt,10pt> ^-{\lambda_u}	
}
\qquad\qquad
\xymatrix{
& {\rep{u}\rep{1}_y}
	\ar@2@/^/ [dr] ^-{\gamma_{u,1_y}}
\\
{\rep{u}}
	\ar@2@/^/ [ur] ^-{\rep{u}\iota_y}
	\ar@2{=}@/_/ [rr] _-{}="tgt"
&& {\rep{u}}
\ar@3 "1,2"!<0pt,-15pt>;"tgt"!<0pt,10pt> ^-{\rho_u}	
}
\] 
\end{itemize}
Let us prove that~$\Std_3(\C)$ is, indeed, a coherent presentation of~$\C$. The standard presentation~$\Std_2(\C)$ is not terminating: indeed, for every $0$-cell~$x$ of~$\C$, the $2$-cell~$\iota_x$ creates infinite rewriting sequences  
\[
1_x \:\dfl\: \rep{1}_x \:\dfl\: \rep{1}_x\rep{1}_x \:\dfl\: \rep{1}_x\rep{1}_x\rep{1}_x \:\dfl\: \cdots
\]
However, we get a convergent presentation of~$\C$ by reversing all the $2$-cells~$\iota_x$ into~$\iota_x^-$. Indeed, for termination, we consider the size of the $1$-cells (the number of generators they contain) and we check that each $2$-cell~$\gamma_{u,v}$ has source of size~$2$ and target of size~$1$, while each $2$-cell~$\iota^-_x$ has source of size~$1$ and target of size~$0$. As a consequence, for every non-identity $2$-cell $f:u\dfl v$ of the free $2$-category, the size of~$u$ is strictly greater than the size of~$v$. For confluence, we study the critical branchings, divided into three families:
\begin{itemize}
\item for all $1$-cells $u:x\fl y$, $v:y\fl z$ and $w:z\fl t$ of~$\C$, one critical branching $(\gamma_{u,v}\rep{w}, \rep{u}\gamma_{v,w})$, giving the $3$-cell
\[
\xymatrix @!C @C=3em @R=1em {
& {\rep{uv}\rep{w}}
	\ar@2 @/^/ [dr] ^-{\gamma_{uv,w}}
        \ar@3 []!<0pt,-15pt>;[dd]!<0pt,15pt> ^-{\alpha_{u,v,w}}
\\
{\rep{u}\rep{v}\rep{w}}
	\ar@2 @/^/ [ur] ^-{\gamma_{u,v}\rep{w}}
	\ar@2 @/_/ [dr] _-{\rep{u}\gamma_{v,w}}
&& {\rep{uvw}}
\\
& {\rep{u}\rep{vw}}
	\ar@2 @/_/ [ur] _-{\gamma_{u,vw}}
}
\]
\item for every $1$-cell $u:x\fl y$ of~$\C$, two critical branchings $(\gamma_{1_x,u}, \iota^-_x\rep{u})$ and $(\gamma_{u,1_y}, \rep{u} \iota^-_y)$, producing the $3$-cells
\[
\xymatrix @C=4em @!C {
{\rep{1}_x\rep{u}}
	\ar@2@/^4ex/ [r] ^{\gamma_{1_x,u}} _{}="src"
	\ar@2@/_4ex/ [r] _{\iota_x^-\rep{u}} ^{}="tgt"
	\ar@3 "src"!<-2.5pt,-12.5pt>;"tgt"!<-2.5pt,12.5pt> ^-*+{\lambda'_u}
& {\rep{u}}
}
\qquad\qquad
\xymatrix @C=4em @!C {
{\rep{u}\rep{1}_y}
	\ar@2@/^4ex/ [r] ^{\gamma_{u,1_y}} _{}="src"
	\ar@2@/_4ex/ [r] _{\rep{u}\iota_y^-} ^{}="tgt"
	\ar@3 "src"!<-2.5pt,-12.5pt>;"tgt"!<-2.5pt,12.5pt> ^-*+{\rho'_u}
& {\rep{u}}
}
\]
\end{itemize}
Since considering the $2$-cells~$\iota_x$ or~$\iota^-_x$ as generators does not change the generated $(2,1)$-category, we get that those three families of $3$-cells form a homotopy basis of~$\Std_2(\C)$. We replace~$\lambda'_u$ by $\lambda_u=\iota_x\rep{u}\star_1 \lambda_u$ and~$\rho'_u$ by $\rho_u=\rep{u}\iota_y\star_1\rho_u$ to get the result.
\end{example}

\begin{example}
\label{Example:HomotopicalCompletion}

Let us consider the monoid~$\M$ presented by the $2$-polygraph
\[
\Sigma \:=\: 
\bigpres{x, y}{xyx \odfll{\alpha} yy}.
\]
We prove that~$\Sigma$ terminates with the deglex order generated by~$x<y$. The $2$-polygraph~$\Sigma$ has one, non confluent critical branching $(\alpha yx, xy\alpha)$. Knuth-Bendix completion~$\check{\Sigma}$ of~$\Sigma$ is obtained by adjunction of the following $2$-cell $\beta:yyyx\dfl xyyy$:
\[
\xymatrix @!C @C=3em @R=1.5em {
& {yyyx}
	\ar@2{.>}@/^/ [dd] ^-{\beta}
\\
{xyxyx}
	\ar@2@/^/ [ur] ^-{\alpha yx} 
	\ar@2@/_/ [dr] _-{xy\alpha} 
\\
& {xyyy}
}
\]
Then, Squier completion~$\Sr(\check{\Sigma})$ extends~$\check{\Sigma}$ with the following two $3$-cells:
\[
\xymatrix @!C @C=3em @R=1.5em {
& {yyyx}
	\ar@2@/^/ [dd] ^-{\beta}
\\
{xyxyx}
	\ar@2@/^/ [ur] ^-{\alpha yx} _(0.66){}="src"
	\ar@2@/_/ [dr] _-{xy\alpha} ^(0.66){}="tgt"
		\ar@3 "src"!<0pt,-15pt>;"tgt"!<0pt,15pt> ^-{A}
\\
& {xyyy}
}
\qquad\qquad
\xymatrix @!C @C=2em @R=1.5em {
& {xyyyyx}
	\ar@2@/^/ [dr] ^-{xy\beta}
		\ar@3 []!<0pt,-20pt>;[dd]!<0pt,20pt> ^-{B}
\\
{yyyxyx}
	\ar@2@/^/ [ur] ^-{\beta yx}
	\ar@2@/_/ [dr] _-{yyy\alpha}
&& {xyxyyy}
	\ar@2@/^/ [dl] ^-{\alpha yyy}
\\
& {yyyyy}
}
\]
In fact, the $3$-cell~$A$ is sufficient to get a homotopy basis of~$\check{\Sigma}$, as witnessed by the following $3$-sphere of the $(3,1)$-category~$\tck{\Sr(\check{\Sigma})}$:
\[
\xymatrix @C=2em{
&& {\sm yyyxyx}
	\ar@2@/^/ [dr] ^-{\sm yyy\alpha}
	\ar@2 [d] |-{\sm \beta yx} _(0.4){}="tgt1"
&&&&& {\sm yyyxyx}
	\ar@2@/^/ [dr] ^-{\sm yyy\alpha}
\\
{\sm xyxyxyx}
	\ar@2@/^2ex/ [urr] ^-{\sm \alpha yxyx}
	\ar@2 [rr] |-{\sm xy\alpha yx}
	\ar@2@/_2ex/ [drr] _-{\sm xyxy\alpha}
&& {\sm xyyyyx}
	\ar@2 [d] |-{\sm xy\beta} _(0.6){}="tgt2"
	\ar@{} [r] |-{B}
	\ar@{} "2,1";"tgt1" |(0.6){Ayx}
	\ar@{} "2,1";"tgt2" |(0.6){xyA}
& {\sm yyyyy}
& \strut
	\ar@4 [r] ^-*+{\omega}
&& {\sm xyxyxyx}
	\ar@2@/^/ [ur] ^-{\sm \alpha yxyx}
	\ar@{} [rr] |-{1_{\alpha y \alpha}}
	\ar@2@/_/ [dr] _-{\sm xyxy\alpha}
&& {\sm yyyyy}
\\
&& {\sm xyxyyy}
		\ar@2@/_/ [ur] _-{\sm \alpha yyy}
&&&&& {\sm yyyxyx}
	\ar@2@/_/ [ur] _-{\sm \alpha yyy}
}
\]
Indeed, the $3$-sphere~$\omega$ proves that the boundary of~$B$ is also the boundary of a $3$-cell of the $(3,1)$-category $\tck{(\Sr(\check{\Sigma})\setminus\ens{B})}$. This elimination mechanism, based on the study of the triple critical branchings of~$\check{\Sigma}$ is part of the \emph{homotopical reduction procedure} introduced in~\cite{GaussentGuiraudMalbos}. This construction coherently eliminates pairs of redundant cells of a coherent presentation. On this particular example, it yields that the $(2,1)$-category~$\tck{\Sigma}$ admits an empty homotopy basis, i.e.\ that the $(3,1)$-polygraph~$(\Sigma,\emptyset)$ is a coherent presentation of the monoid~$\M$.
\end{example}

\section{A homological finiteness condition}
\label{Section:homologicalFinitenessCondition}

\subsection{Monoids of finite homological type}

\subsubsection{Resolutions}

Let~$\M$ be a monoid. We denote by~$\Zb\M$ the ring generated by~$\M$, that is, the free abelian group over~$\M$, equipped with the canonical extension of the product of~$\M$:
\[
\big(\sum_{u\in\M} \lambda_u u\big)
\big(\sum_{v\in\M} \lambda_v v\big)
\:=\:
\sum_{u,v\in\M} \lambda_u\lambda_v uv
\:=\:
\sum_{w\in \M}
\sum_{uv=w}
\lambda_u\lambda_v w.
\]
Given a (left) $\Zb\M$-module~$M$, a \emph{resolution of~$M$} is an exact sequence of $\Zb\M$-modules
\[
\xymatrix{
(\cdots) \ar[r]
& M_{n} \ar[r] ^-{d_n} 
& M_{n-1} \ar[r]
& (\cdots) \ar[r] 
& M_2 \ar[r] ^-{d_2} 
& M_1 \ar[r] ^-{d_1}
& M_0 \ar[r] ^-{d_0}
& M \ar[r]
& 0
}
\]
that is, a sequence $(M_n)_{n\in\Nb}$ of $\Zb\M$-modules, together with a sequence $(d_n)_{n\in\Nb}$ of morphisms of $\Zb\M$-modules, called the \emph{boundary maps}, such that~$d_0$ is surjective and
\[
\im d_{n+1} = \ker d_n
\] 
holds for every natural number~$n$. In particular, the sequence $(M_n,d_n)_{n\in\Nb}$ is a \emph{(chain) complex of $\Zb\M$-modules}, that is, we have the inclusion $\im d_{n+1} \subseteq \ker d_n$ or, equivalently, the relation $d_nd_{n+1}=0$ holds for every natural number~$n$. Such a resolution is called \emph{projective} (resp. \emph{free}) if all the modules~$M_n$ are projective (resp. free). Given a natural number~$n$, a \emph{partial resolution of length~$n$ of~$\M$} is defined in a similar way but with a bounded sequence $(M_k)_{0\leq k\leq n}$ of $\Zb\M$-modules.

\subsubsection{Contracting homotopies}

Given a complex of $\Zb\M$-modules
\[
\xymatrix{
(\cdots) \ar[r]
& M_{n+1} \ar[r] ^-{d_{n+1}} 
& M_n \ar[r] ^-{d_n}
& M_{n-1} \ar[r] 
& (\cdots) \ar[r]
& M_1 \ar[r] ^-{d_1}
& M_0 \ar[r] ^-{d_0}
& M \ar[r]
& 0
}
\]
a method to prove that such a complex is a resolution of~$\M$ is to construct a \emph{contracting homotopy}, that is a sequence of morphisms of $\Zb$-modules
\[
\xymatrix{
(\cdots)
& M_{n+1} \ar[l]
& M_{n} \ar[l] _-{i_{n+1}}
& M_{n-1} \ar[l] _-{i_n} 
& (\cdots) \ar[l] 
& M_1 \ar[l]
& M_0 \ar[l] _-{i_1}
& M \ar[l] _-{i_0}
}
\]
such that $d_0 i_0=\id_{M}$ and, for every~$n$, we have
\[
d_{n+1} i_{n+1} + i_n d_n \:=\: \id_{M_n}.
\]
Indeed, in that case, we have that~$d_0$ is surjective. Moreover, for every natural number~$n$  and every~$x$ in~$\ker d_n$, the equality $d_{n+1} i_{n+1}(x)=x$ holds, proving that~$x$ is in~$\im d_{n+1}$, so that $\ker d_n \subseteq \im d_{n+1}$ holds. As a consequence, the considered complex is a resolution of~$M$.

\subsubsection{Homological type left-\pdf{\FP_n}}
\label{Definition:LeftFPn}

If~$\M$ is a monoid, the \emph{trivial $\Zb\M$-module} is the abelian group~$\Zb$ equipped with the trivial action~$un=n$, for every~$u$ in~$\M$ and~$n$ in~$\Zb$. A monoid~$\M$ is of \emph{homological type left-$\FP_n$}, for a natural number~$n$, if there exists a partial resolution of length~$n$ of the trivial $\Zb\M$-module~$\Zb$ by projective, finitely generated $\Zb\M$-modules:
\[
\xymatrix{
P_{n} \ar[r] ^-{d_n}
& P_{n-1} \ar[r] ^-{d_{n-1}}
& (\cdots) \ar[r] ^-{d_2}
& P_1 \ar[r] ^-{d_1} 
& P_0 \ar[r] ^-{d_0} 
& \Zb \ar[r]
& 0.
}
\]
A monoid~$\M$ is of \emph{homological type left-$\FP_{\infty}$} if there exists a resolution of~$\Zb$ by projective, finitely generated $\Zb\M$-modules.

\begin{lemma}
\label{lemme_pl_n}
Let~$\M$ be a monoid and let~$n$ be a natural number. The following assertions are equivalent:  

\begin{enumerate}[\bf i)]
\item The monoid~$\M$ is of homological type left-$\FP_n$.
\item There exists a free, finitely generated partial resolution of the trivial $\Zb\M$-module~$\Zb$ of length~$n$
\[
\xymatrix{
F_n \ar[r] 
& F_{n-1} \ar[r] 
& (\cdots) \ar[r] 
& F_0 \ar[r] 
& \Zb \ar[r]
& 0.
}
\]
\item For every $0\leq k<n$ and every projective, finitely generated partial resolution of the trivial  $\Zb\M$-module~$\Zb$ of length~$k$
\[
\xymatrix{
P_k \ar[r] ^-{d_k} 
& P_{k-1} \ar[r]^{d_{k-1}}
& (\cdots) \ar[r] 
& P_0 \ar[r]^{d_0} 
& \Zb \ar[r]
& 0,
}
\]
the $\Zb\M$-module~$\ker d_k$ is finitely generated.  
\end{enumerate}
\end{lemma}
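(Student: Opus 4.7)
The plan is to prove the cycle $(ii)\Rightarrow(i)\Rightarrow(iii)\Rightarrow(ii)$. The implication $(ii)\Rightarrow(i)$ is immediate: every free module is projective, so the resolution given by $(ii)$ fulfils the definition of left-$\FP_n$ recalled in~\ref{Definition:LeftFPn}.

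For $(iii)\Rightarrow(ii)$, I would build a free finitely generated partial resolution of length $n$ step by step. I start with $F_0=\Zb\M$ and $d_0\colon\Zb\M\fl\Zb$ the augmentation sending every element of $\M$ to $1$; this is a free finitely generated partial resolution of length $0$. Assuming I have built a free finitely generated partial resolution
\[
\xymatrix{
F_k \ar[r]^-{d_k}
& F_{k-1} \ar[r]
& (\cdots) \ar[r]
& F_0 \ar[r]^-{d_0}
& \Zb \ar[r]
& 0
}
\]
of length $k<n$, hypothesis $(iii)$ applied to it (noting that a free finitely generated module is projective and finitely generated) ensures that $\Ker d_k$ is finitely generated. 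Choosing a finite generating family $g_1,\ldots,g_r$ of $\Ker d_k$, I let $F_{k+1}$ be the free $\Zb\M$-module on $r$ generators $e_1,\ldots,e_r$ and define $d_{k+1}\colon F_{k+1}\fl F_k$ by $d_{k+1}(e_i)=g_i$, extending the partial resolution by one step. After $n$ iterations, this yields $(ii)$.

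The heart of the argument is $(i)\Rightarrow(iii)$, which I would deduce from the iterated form of Schanuel's lemma: given two exact sequences
\[
\xymatrix@C=1em{
0 \ar[r] & K \ar[r] & P_k \ar[r] & (\cdots) \ar[r] & P_0 \ar[r] & \Zb \ar[r] & 0
}
\]
and
\[
\xymatrix@C=1em{
0 \ar[r] & K' \ar[r] & Q_k \ar[r] & (\cdots) \ar[r] & Q_0 \ar[r] & \Zb \ar[r] & 0
}
\]
in which every $P_i$ and $Q_i$ is projective and finitely generated, the two modules
\[
K \oplus Q_k \oplus P_{k-1} \oplus Q_{k-2} \oplus (\cdots)
\qquad\text{and}\qquad
K' \oplus P_k \oplus Q_{k-1} \oplus P_{k-2} \oplus (\cdots)
\]
are isomorphic, so that $K$ is finitely generated if and only if $K'$ is. From hypothesis $(i)$, I fix a projective finitely generated partial resolution of length $n$. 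For any $0\leq k<n$, truncating it at step $k$ yields an exact sequence of the above form whose kernel $\Im d_{k+1}$ is a quotient of the finitely generated module $P_{k+1}$, hence finitely generated. Comparing with an arbitrary projective finitely generated partial resolution of length $k$ through Schanuel's lemma then shows that its kernel is also finitely generated, which is exactly $(iii)$.

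The only delicate point will be a clean invocation of the iterated Schanuel's lemma; it is a classical result, obtained either by splicing the two resolutions along iterated pullbacks or by induction on $k$ from the one-step Schanuel lemma applied to successive syzygies. Beyond that, the argument is pure formal bookkeeping and does not rely on any of the specifically polygraphic material developed earlier in the paper.
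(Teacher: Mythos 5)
Your proposal is correct and follows essentially the same route as the paper, which derives the lemma from the iterated (generalised) Schanuel's lemma exactly as you do for $(i)\Rightarrow(iii)$, the remaining implications being the standard formal bookkeeping you describe. The paper only sketches this argument, and your filled-in details — the augmentation-based induction for $(iii)\Rightarrow(ii)$ and the truncation-plus-Schanuel comparison for $(i)\Rightarrow(iii)$ — are accurate.
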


Lemma~\ref{lemme_pl_n} is a consequence of the following generalisation of Schanuel's lemma. If
\[
\xymatrix{
0\ar[r]
& Q \ar[r] 
& P_n \ar[r] 
& (\cdots) \ar[r] 
& P_0 \ar[r] 
& \Zb \ar[r]
& 0
}
\]
and 
\[
\xymatrix{
0\ar[r]
& Q' \ar[r] 
& P'_n \ar[r] 
& (\cdots) \ar[r] 
& P'_0 \ar[r] 
& \Zb \ar[r]
& 0
}
\]
are exact sequences of $\Zb\M$-modules, with each~$P_k$ and~$P'_k$ finitely generated and projective, then the $\Zb\M$-module~$Q$ is finitely generated if, and only if, the $\Zb\M$-module~$Q'$ is finitely generated. 

\subsection{Monoids of homological type left-\pdf{\FP_2}}

\subsubsection{Presentations and partial resolutions of length~\pdf{2}}
\label{Section:ReidemesterFoxJacobian}

Let~$\M$ be a monoid and let~$\Sigma$ be a presentation of~$\M$. Let us define a partial resolution of length~$2$ of~$\Zb$ by free $\Zb\M$-modules
\[
\xymatrix{
\Zb\M[\Sigma_2]
	\ar [r] ^-{d_2}
& \Zb\M[\Sigma_1]
	\ar [r] ^-{d_1}
& \Zb\M
	\ar [r] ^-{\epsilon}
& \Zb
	\ar [r]
& 0. 
}
\]
The $\Zb\M$-modules $\Zb\M[\Sigma_1]$ and $\Zb\M[\Sigma_2]$ are the free $\Zb\M$-modules over~$\Sigma_1$ and~$\Sigma_2$, respectively: they contain the formal sums of elements denoted by~$u[x]$, where~$u$ is an element of~$\M$ and~$x$ is a $1$-cell of~$\Sigma$ or a $2$-cell of~$\Sigma$. Let us note that~$\Zb\M$ is isomorphic to the free $\Zb\M$-module over the singleton~$\Sigma_0$. The boundary maps are defined, on generators, by
\[
\begin{array}{r c l}
\Zb\M &\:\ofll{\epsilon}\: &\Zb \\
u &\:\longmapsto\: &1
\end{array}
\qquad\qquad
\begin{array}{r c l}
{\Zb\M[\Sigma_1]} &\:{\ofll{d_1}}\: &{\Zb\M} \\
{[x]} &\:{\longmapsto}\: &{\cl{x} - 1}
\end{array}
\qquad\qquad
\begin{array}{r c l}
{\Zb\M[\Sigma_2]} &\:{\ofll{d_2}}\: &{\Zb\M[\Sigma_1]} \\
{[\alpha]} &\:{\longmapsto}\: &{[s(\alpha)] - [t(\alpha)]}
\end{array}
\]
The maps~$\epsilon$ and~$d_2$ are respectively called the \emph{augmentation map} and the \emph{Reidemester-Fox Jacobian of~$\Sigma$}. In the definition of~$d_2$, the bracket~$[\cdot]$ is extended to the $1$-cells of~$\tck{\Sigma}$ thanks to the relation
\[
[1] \:=\: 0 
\qquad\text{and}\qquad
[uv] \:=\: [u] + \cl{u}[v],
\]
for all $1$-cells~$u$ and~$v$ of~$\Sigma$.

\begin{proposition}
\label{Proposition:LowSquierResolution}
Let~$\M$ be a monoid and let~$\Sigma$ be a presentation of~$\M$. The sequence of $\Zb\M$-modules
\[
\xymatrix{
\Zb\M[\Sigma_2]
	\ar [r] ^-{d_2}
& \Zb\M[\Sigma_1]
	\ar [r] ^-{d_1}
& \Zb\M
	\ar [r] ^-{\epsilon}
& \Zb
	\ar [r]
& 0
}
\]
is a partial free resolution of length~$2$ of~$\Zb$.
\end{proposition}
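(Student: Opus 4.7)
The plan is to verify, in order, surjectivity of $\epsilon$, the chain complex property, exactness at $\Zb\M$, and finally exactness at $\Zb\M[\Sigma_1]$. Freeness of each module is immediate by construction. For surjectivity, $\epsilon(1) = 1$ generates $\Zb$. On generators, $\epsilon \circ d_1([x]) = \epsilon(\cl{x}-1) = 0$, and $d_1 \circ d_2([\alpha]) = (\cl{s(\alpha)} - 1) - (\cl{t(\alpha)} - 1)$ vanishes because $\cl{s(\alpha)} = \cl{t(\alpha)}$ in $\M$, $\Sigma$ being a presentation.

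For exactness at $\Zb\M$, the ideal $\Ker \epsilon$ is generated as an abelian group by the family $\{u - 1 \mid u \in \M\}$, so it suffices to exhibit each $u - 1$ in $\Im d_1$. Picking a representative $\rep{u} = y_1 \cdots y_n$ in $\Sigma_1^*$ of $u$, the relation $[uv] = [u] + \cl{u}[v]$ applied recursively gives $[\rep{u}] = \sum_{k=1}^{n} \cl{y_1\cdots y_{k-1}}[y_k]$ in $\Zb\M[\Sigma_1]$. Applying $d_1$ yields a telescoping sum equal to $\cl{y_1 \cdots y_n} - 1 = u - 1$, as desired.

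The main step is exactness at $\Zb\M[\Sigma_1]$. The central sublemma is: if $u, v \in \Sigma_1^*$ are parallel with $\cl{u} = \cl{v}$ in $\M$, then $[u] - [v] \in \Im d_2$. Since $\Sigma_2$ generates the congruence defining $\M$, such a pair is connected by a zig-zag of elementary rewrites $w \cdot s(\alpha) \cdot w' \leftrightarrow w \cdot t(\alpha) \cdot w'$ with $\alpha \in \Sigma_2$. Using $[uv] = [u] + \cl{u}[v]$ and $\cl{s(\alpha)} = \cl{t(\alpha)}$, each elementary step contributes exactly $\pm\cl{w}\bigl([s(\alpha)] - [t(\alpha)]\bigr) = \pm d_2(\cl{w}[\alpha])$, and summing along the zig-zag proves the sublemma. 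Given $\xi = \sum_i \lambda_i u_i[x_i] \in \Ker d_1$, I rewrite $u_i[x_i] = [\rep{u_i} x_i] - [\rep{u_i}]$ and apply the sublemma to the parallel pair $\rep{u_i} x_i$ and $\rep{u_i \cl{x_i}}$ to obtain
\[
\xi \;\equiv\; \sum_i \lambda_i \bigl([\rep{u_i\cl{x_i}}] - [\rep{u_i}]\bigr) \pmod{\Im d_2}.
\]
Grouping the right-hand side by elements of $\M$, the coefficient of each $[\rep{v}]$ is $\sum_{u_i \cl{x_i} = v} \lambda_i - \sum_{u_i = v} \lambda_i$, which vanishes for every $v$ precisely because $d_1(\xi) = \sum_i \lambda_i(u_i\cl{x_i} - u_i) = 0$ in $\Zb\M$. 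Hence $\xi \in \Im d_2$. The principal obstacle is the sublemma just described: converting the syntactic Thue congruence $\equiv_{\Sigma_2}$ into an algebraic $\Im d_2$-equivalence, for which the derivation identity $[uv] = [u] + \cl{u}[v]$ is essential; the surrounding arguments are then bookkeeping.
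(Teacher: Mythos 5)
Your proof is correct. It departs from the paper's argument in its top-level strategy: the paper establishes exactness by constructing an explicit contracting homotopy $(s_0,s_1,s_2)$, with $s_1(u)=[\rep{u}]$ and $s_2(u[x])=[\sigma(\rep{u}x)]$ for a chosen normalisation strategy $\sigma$, the bracket being extended to $2$-cells of $\tck{\Sigma}$ so that $d_2[f]=[s(f)]-[t(f)]$; the identities $d_1s_1+s_0\epsilon=\id$ and $d_2s_2+s_1d_1=\id$ then yield exactness at $\Zb\M$ and at $\Zb\M[\Sigma_1]$ in one stroke. You instead verify $\Ker\epsilon\subseteq\Im d_1$ and $\Ker d_1\subseteq\Im d_2$ directly, the latter through the sublemma that $\cl{u}=\cl{v}$ forces $[u]-[v]\in\Im d_2$, followed by the grouping argument driven by $d_1(\xi)=0$. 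The two proofs are computationally equivalent — your zig-zag sublemma is precisely the paper's identity $d_2[f]=[s(f)]-[t(f)]$ applied to a $2$-cell $f:u\dfl v$ of $\tck{\Sigma}$, and your telescoping evaluation of $d_1[\rep{u}]$ is the paper's $d_1s_1(u)=u-1$ — but your route is the more elementary one: it needs only the existence of some rewriting zig-zag (that is, the definition of the Thue congruence) rather than a chosen normalisation strategy, and it never mentions the free $(2,1)$-category. What the contracting-homotopy packaging buys is uniformity and constructiveness: the same pattern extends verbatim to length~$3$ in Proposition~\ref{Proposition:LowSquierResolution2}, where a chosen $3$-cell $\sigma(\rep{u}\alpha)$ plays the role that your zig-zag plays here, whereas a direct kernel computation in higher degree would be substantially more delicate.
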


\begin{proof}
We first note that the sequence is a chain complex. Indeed, the augmentation map is surjective by definition. Moreover, we have 
\[
\epsilon d_1[x] \:=\: \epsilon (\cl{x}) - \epsilon(1) \:=\: 1 - 1 \:=\: 0,
\]
for every $1$-cell~$x$ of~$\Sigma$. In order to check that $d_1d_2 = 0$, we first prove, by induction on the length, that we have $d_1[u]=\cl{u}-1$ for every $1$-cell~$u$ of~$\tck{\Sigma}$. For the unit, we have $d_1[1]=d_1(0)=0$ and $\cl{1}-1=0$. Then, for a composite $1$-cell~$uv$ such that the result holds for both~$u$ and~$v$, we get
\[
d_1[uv] \:=\: d_1[u] + \cl{u} d_1[v] \:=\: \cl{u} - 1 + \cl{uv} - \cl{u} \:=\: \cl{uv} - 1.
\]
As a consequence, we have
\[
d_1d_2 [\alpha] \:=\: d_1[s(\alpha)] - d_1[t(\alpha)] \:=\: \cl{s(\alpha)} - \cl{t(\alpha)} \:=\: 0,
\]
for every $2$-cell~$\alpha$ of~$\Sigma$, where the last equality comes from  $\cl{s(\alpha)}=\cl{t(\alpha)}$, that holds since~$\Sigma$ is a presentation of~$\M$. 

The rest of the proof consists in defining contracting homotopies~$i_0$, $i_1$, $i_2$:
\[
\xymatrix{
\Zb\M[\Sigma_2]
	\ar@<+0.5ex> [r] ^-{d_2}
& \Zb\M[\Sigma_1]
	\ar@<+0.5ex> [r] ^-{d_1}
        \ar@<+0.5ex> [l] ^-{i_2}
& \Zb\M
	\ar@<+0.5ex> [r] ^-{\epsilon}
         \ar@<+0.5ex> [l] ^-{i_1}
& \Zb
         \ar@<+0.5ex> [l] ^-{i_0}
}
\]
We choose a representative~$\rep{u}$ in~$\Sigma_1^*$ for every element~$u$ of~$\M$, with $\rep{1}_x=1_x$ for every $0$-cell~$x$ of~$\Sigma$, and we fix a normalisation strategy~$\sigma$ for~$\Sigma$. Then we define the morphisms of $\Zb$-modules~$i_0$, $i_1$ and~$i_2$ by their values on generic elements
\[
i_0(1) \:=\: 1,
\qquad\qquad 
i_1(u) \:=\: [\rep{u}],
\qquad\qquad 
i_2(u[x]) \:=\: [\sigma(\rep{u}x)],
\]
where the bracket~$[\cdot]$ is extended to every $2$-cell of~$\tck{\Sigma}$ thanks to the relations
\[
[1_u] \:=\: 0, 
\qquad
[u f v] \:=\: \cl{u}[f]
\qquad\text{and}\qquad
[f\star_1 g] \:=\: [f] + [g],
\]
for all $1$-cells~$u$ and~$v$ and $2$-cells~$f$ and~$g$ of~$\tck{\Sigma}$ such that the composites~$ufv$ and~$f\star_1 g$ are defined. 

First, we have $\epsilon i_0 = \id_{\Zb}$. Next, for every~$u$ in~$\M$, we have $i_0\epsilon(u) = 1$ and
\[
d_1i_1 (u) \:=\: d_1[\rep{u}] = u - 1.
\]
Thus $d_1 i_1+i_0\epsilon = \id_{\Zb\M}$. Finally, we have, on the one hand,
\[
i_1 d_1(u[x]) \:=\: i_1(u\cl{x} - u) \:=\:  [\rep{ux}]  - [\rep{u}]
\]
and, on the other hand, 
\[
d_2 i_2 (u[x]) \:=\: d_2[\sigma(\rep{u}x)] \:=\: [\rep{u}x] - [\rep{ux}] \:=\: u[x] + [\rep{u}] - [\rep{ux}].
\]
For this equality, we check that $d_2[f]=[s(f)]-[t(f)]$ holds for every $2$-cell~$f$ of~$\tck{\Sigma}$ by induction on the size of~$f$. Hence we have $d_2 i_2 + i_1d_1=\id_{\Zb\M[\Sigma_1]}$, thus concluding the proof.
\end{proof}

From Proposition~\ref{Proposition:LowSquierResolution}, we deduce the following result:

\begin{theorem}
\label{Theorem:FP1FP2}
The following properties hold.
\begin{enumerate}[{\bf i)}]
\item Every monoid is of homological type left-$\FP_0$. 
\item Every finitely generated monoid is of homological type left-$\FP_1$.
\item Every finitely presented monoid is of homological type left-$\FP_2$. 
\end{enumerate}
\end{theorem}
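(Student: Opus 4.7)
The plan is to deduce all three statements directly from Proposition~\ref{Proposition:LowSquierResolution} by appropriately choosing the presentation and then truncating the length-$2$ partial free resolution it provides. The key observation is that exactness at $\Zb$ and at $\Zb\M$ does not rely on any finiteness hypothesis on $\Sigma_1$ or $\Sigma_2$, and exactness at $\Zb\M[\Sigma_1]$ uses $\Sigma_2$ only through the definition of $d_2$. So each finiteness assumption on $\M$ gives us exactly one more finitely generated term we can exhibit.

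For (i), no hypothesis is needed: the augmentation $\epsilon \colon \Zb\M \to \Zb$ sends every element of $\M$ to $1$ and is surjective, so $\Zb\M \xrightarrow{\epsilon} \Zb \to 0$ is a partial free resolution of length $0$, and $\Zb\M$ is finitely generated (by $1$) as a $\Zb\M$-module. For (ii), pick a finite generating $1$-polygraph $\Sigma_1$ of $\M$ and extend it to any presentation $\Sigma = (\Sigma_1,\Sigma_2)$ of $\M$ (for example, take $\Sigma_2$ to be all pairs of parallel $1$-cells of $\Sigma_1^\ast$ with equal image in $\M$). Proposition~\ref{Proposition:LowSquierResolution} yields in particular the exact sequence
\[
\Zb\M[\Sigma_1] \ofll{d_1} \Zb\M \ofll{\epsilon} \Zb \to 0,
\]
and since $\Sigma_1$ is finite, $\Zb\M[\Sigma_1]$ is finitely generated free. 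For (iii), apply Proposition~\ref{Proposition:LowSquierResolution} directly to a finite presentation $\Sigma = (\Sigma_1,\Sigma_2)$ of $\M$: both $\Zb\M[\Sigma_1]$ and $\Zb\M[\Sigma_2]$ are then finitely generated free $\Zb\M$-modules, and the length-$2$ partial resolution gives left-$\FP_2$.

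There is essentially no obstacle; the work has been done in Proposition~\ref{Proposition:LowSquierResolution}. The only subtlety worth mentioning is that for (ii) one must invoke the existence of \emph{some} presentation extending a given generating set, which is automatic (one may always adjoin all relations that hold in $\M$). One could alternatively give a direct proof of (ii) by noting that the augmentation ideal $\Ker\epsilon$ is generated as a $\Zb\M$-module by the elements $\cl{x} - 1$ for $x \in \Sigma_1$, using the identity $[uv] = [u] + \cl{u}[v]$ that appears in the definition of $d_2$; but invoking Proposition~\ref{Proposition:LowSquierResolution} is shorter and uniform with the other two parts.
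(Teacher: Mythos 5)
Your proof is correct and follows exactly the paper's route: the paper likewise deduces all three parts from Proposition~\ref{Proposition:LowSquierResolution} (stating only ``From Proposition~\ref{Proposition:LowSquierResolution}, we deduce the following result''), and your truncation argument, including the observation that a finite generating set can always be completed to a presentation for part (ii), is the intended justification spelled out in full.
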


\subsubsection{Examples}

Let us consider the monoid~$\M$ presented by the $2$-polygraph
\[
\Sigma \:=\: \bigpres {a,c,t} {at^{n+1} \odfll{\alpha_n}ct^n \,,\, n\in\Nb}.
\]
The monoid~$\M$ is finitely generated and, thus, it is of homological type left-$\FP_1$. However, for every natural number~$n$, we have 
\begin{align*}
d_2[\alpha_{n+1}] 
	\:&=\: [at^{n+2}] - [ct^{n+1}], \\
	\:&=\: [at^{n+1}] + \cl{at^{n+1}}[t] - [ct^n] - \cl{ct^n}[t], \\
	\:&=\: d_2[\alpha_n] + (\cl{at^{n+1}} - \cl{ct^n}) [t].
\end{align*}	
The equality $\cl{at^{n+1}} =  \cl{ct^n}$ holds in~$\M$ by definition, yielding $d_2[\alpha_{n+1}]  = d_2[\alpha_{n}] $. As a consequence, the $\Zb\M$-module~$\ker d_2$ is generated by the elements $[\alpha_n]-[\alpha_0]$. Since the $\Zb\M$-module~$\ker d_1$ is equal to~$\im d_2$, hence isomorphic to $\Zb\M[\Sigma_2]/\ker d_2$, it follows that~$\ker d_1$ is generated by~$[\alpha_0]$ only, so that, by Lemma~\ref{lemme_pl_n}, the monoid~$\M$ is of homological type left-$\FP_2$. This can also be obtained by simply observing that~$\M$ admits the finite presentation $\pres{a,c,t}{\alpha_0}$.

Now, let us consider the monoid~$\M$ presented by the $2$-polygraph
\[
\Sigma \:=\: \bigpres {a,b,t} {at^nb \odfll{\alpha_n} 1 \,,\, n\in\Nb}.
\]
The monoid~$\M$ is of homological type left-$\FP_1$, but not left-$\FP_2$. This is proved by showing that~$\ker d_1$ is not finitely generated as a $\Zb\M$-module, which is tedious by direct computation in this case. Another way to conclude is to extend the partial resolution of Proposition~\ref{Proposition:LowSquierResolution} by one dimension: it will then be sufficient to compute~$\im d_3$, which is trivial in this case because~$\Sigma$ has no critical branching, so that $\ker d_2=0$ and, as a consequence, $\ker d_1$ is isomorphic to $\Zb\M[\Sigma_2]$. Convergent presentations provide a method to obtain such a length-three partial resolution.

\subsection{Squier's homological theorem}
\label{SquierHomologicalTheorem}

\subsubsection{Coherent presentations and partial resolutions of length~\pdf{3}}

Let~$\M$ be a monoid and let~$\Sigma$ be a coherent presentation of~$\M$. Let us extend the partial resolution of \ref{Proposition:LowSquierResolution} into the resolution of length~$3$
\[
\xymatrix{
\Zb\M[\Sigma_3]
	\ar [r] ^-{d_3}
& \Zb\M[\Sigma_2]
	\ar [r] ^-{d_2}
& \Zb\M[\Sigma_1]
	\ar [r] ^-{d_1}
& \Zb\M
	\ar [r] ^-{\epsilon}
& \Zb
	\ar [r]
& 0,
}
\]
where the $\Zb\M$-module $\Zb\M[\Sigma_3]$ is the free $\Zb\M$-module over~$\Sigma_3$, formed by the linear combination of elements~$u[\gamma]$,  with~$u$ in~$\M$ and~$\gamma$ a $3$-cell of~$\Sigma$. The boundary map~$d_3$ is defined, for every $3$-cell~$\gamma$ of~$\Sigma$, by
\[
d_3[\gamma] \:=\: [s_2(\gamma)] - [t_2(\gamma)].
\]
The bracket~$[\cdot]$ is extended to $3$-cells of~$\tck{\Sigma}$ thanks to the relations
\[
[uAv] \:=\: \cl{u}[A],
\qquad
[A\star_1 B] \:=\: [A] + [B],
\qquad
[A\star_2 B] \:=\: [A] + [B],
\]
for all $1$-cells~$u$ and~$v$ and $3$-cells~$A$ and~$B$ of~$\tck{\Sigma}$ such that the composites are defined. In particular, the latter relation implies $[1_f]=0$ for every $2$-cell~$f$ of~$\tck{\Sigma}$.
We check, by induction on the size, that $d_3[A] = [s_2(A)] - [t_2(A)]$ holds for every $3$-cell~$A$ of~$\tck{\Sigma}$.

\begin{proposition}
\label{Proposition:LowSquierResolution2}
Let~$\M$ be a monoid and let~$\Sigma$ be a coherent presentation of~$\M$. The sequence of $\Zb\M$-modules 
\[
\xymatrix{
\Zb\M[\Sigma_3]
	\ar [r] ^-{d_3}
& \Zb\M[\Sigma_2]
	\ar [r] ^-{d_2}
& \Zb\M[\Sigma_1]
	\ar [r] ^-{d_1}
& \Zb\M
	\ar [r] ^-{\epsilon}
& \Zb
	\ar [r]
& 0
}
\]
is a partial free resolution of length~$3$ of~$\Zb$.
\end{proposition}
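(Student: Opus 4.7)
The plan is to follow the template of Proposition~\ref{Proposition:LowSquierResolution}: exactness at $\Zb\M$ and $\Zb\M[\Sigma_1]$ is inherited from that proposition, so only two things remain to check, namely that $d_2 d_3 = 0$ and that exactness holds at $\Zb\M[\Sigma_2]$.

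For the complex condition, I would first prove by induction on the size of a $2$-cell $f$ of $\tck{\Sigma}_2$ that $d_2[f] = [s(f)] - [t(f)]$: the base case is the definition of $d_2$ on generators of $\Sigma_2$; $\star_1$-composition telescopes by $[f\star_1 g] = [f]+[g]$; whiskering is handled by $[ufv]=\cl{u}[f]$, using that $\cl{s(f)}=\cl{t(f)}$ under left multiplication by $\cl{u}$; and $[\alpha^-]=-[\alpha]$ is forced by $d_2[\alpha\star_1 \alpha^-]=0$. Since $s_2(\gamma)$ and $t_2(\gamma)$ are parallel $2$-cells of $\tck{\Sigma}_2$ for every generating $3$-cell $\gamma$ of $\Sigma_3$, this immediately yields $d_2 d_3[\gamma] = 0$.

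For exactness at $\Zb\M[\Sigma_2]$, I would extend the contracting homotopy of Proposition~\ref{Proposition:LowSquierResolution} by a $\Zb$-linear map $s_3 : \Zb\M[\Sigma_2] \fl \Zb\M[\Sigma_3]$ satisfying $d_3 s_3 + s_2 d_2 = \id_{\Zb\M[\Sigma_2]}$. On a $\Zb$-basis element $u[\alpha]$ with $\alpha : p \dfl q$ in $\Sigma_2$ and $p = p_1\cdots p_n$, unfolding $[p] = \sum_i \cl{p_{<i}}[p_i]$ and applying $s_2$ termwise identifies $s_2(\cl{u}[p])$ with the bracket $[f_p]$ of the $2$-cell
\[
f_p \:=\: \sigma(\rep{u}p_1)\,p_{>1} \star_1 \sigma(\rep{up_1}p_2)\,p_{>2} \star_1 \cdots \star_1 \sigma(\rep{up_{<n}}p_n) \::\: \rep{u}p \dfl \rep{up},
\]
since right-whiskerings do not affect the bracket; symmetrically $s_2(\cl{u}[q]) = [f_q]$ with $f_q : \rep{u}q \dfl \rep{up}$. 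Since $[\rep{u}\alpha] = \cl{u}[\alpha] = u[\alpha]$, the $2$-cells $\rep{u}\alpha \star_1 f_q$ and $f_p$ are parallel in $\tck{\Sigma}_2$, so the hypothesis that $\Sigma_3$ is a homotopy basis of $\tck{\Sigma}_2$ yields a $3$-cell $A_{u,\alpha} : \rep{u}\alpha \star_1 f_q \tfl f_p$ in $\tck{\Sigma}$. I would set $s_3(u[\alpha]) := [A_{u,\alpha}]$ and extend $\Zb$-linearly. The identity $d_3[A]=[s_2(A)]-[t_2(A)]$ then delivers
\[
d_3 s_3(u[\alpha]) \:=\: u[\alpha] + [f_q] - [f_p] \:=\: u[\alpha] - s_2 d_2(u[\alpha]),
\]
which is the desired homotopy identity, whence $\Ker d_2 \subseteq \Im d_3$.

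The main obstacle will be the bookkeeping step matching $s_2(\cl{u}[p])$ with $[f_p]$: one has to unfold the recursive bracket on a path of length $n$, track the right-whiskerings $p_{>i}$ so that the successive sources and targets of the factors of $f_p$ align for $\star_1$-composition, and verify that the total bracket reduces exactly to $\sum_i [\sigma(\rep{up_{<i}}p_i)]$. Once this calculation is in place, the existence of the bridging $3$-cell $A_{u,\alpha}$ is immediate from the homotopy basis hypothesis and the remaining identities are automatic.
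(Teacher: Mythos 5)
Your proposal is correct and follows essentially the same route as the paper: show $d_2d_3=0$ from the parallelism of the source and target of each generating $3$-cell, then extend the contracting homotopy by sending $u[\alpha]$ to the bracket of a $3$-cell, supplied by the homotopy-basis hypothesis, filling the square between $\rep{u}\alpha$ composed with a normalising $2$-cell of its target and a normalising $2$-cell of its source. The only difference is cosmetic: you make explicit the composite $f_p$ whose bracket realises $s_2(u[p])$, a bookkeeping point the paper's proof leaves implicit by writing $\sigma(\rep{u}v)$ directly.
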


\begin{proof}
We proceed with the same notations as the ones of the proof of Proposition~\ref{Proposition:LowSquierResolution}, with the extra hypothesis that~$\sigma$ is a left normalisation strategy for~$\Sigma$. This implies that $i_2(u[v])=[\sigma(\rep{u}v)]$ holds for all~$u$ in~$\M$ and~$v$ in~$\Sigma_1^*$, by induction on the length of~$v$.

We have $d_2d_3=0$ because $s_1s_2=s_1t_2$ and $t_1s_2=t_1t_2$. Then, we define the following morphism of $\Zb$-modules
\[
\begin{array}{r c l}
{\Zb\M[\Sigma_2]} &\:{\ofll{i_3}}\: &{\Zb\M[\Sigma_3]} \\
{u[\alpha]} &\:\longmapsto\: &{[\sigma(\rep{u}\alpha)]}
\end{array}
\]
where $\sigma(\rep{u}\alpha)$ is a $3$-cell of~$\tck{\Sigma}$ with the following shape, with $v=s(\alpha)$ and $w=t(\alpha)$:
\[
\xymatrix @!C @C=2em @R=2em {
& {\rep{u}w}
	\ar@2@/^/ [dr] ^-{\sigma(\rep{u}w)}
\\
{\rep{u}v}
	\ar@2@/^/ [ur] ^-{\rep{u}\alpha}
	\ar@2@/_/ [rr] _-{\sigma(\rep{u}v)} ^-{}="tgt"
&& {\rep{uv}}
\ar@3 "1,2"!<-10pt,-20pt>;"tgt"!<-10pt,15pt> ^-{\sigma(\rep{u}\alpha)}
}
\]
Let us note that such a $3$-cell necessarily exists in~$\tck{\Sigma}$ because~$\Sigma_3$ is a homotopy basis of~$\tck{\Sigma}$. Then we have, on the one hand, 
\[
i_2d_2(u[\alpha]) \:=\: i_2(u[v]-u[w]) \:=\: [\sigma(\rep{u}v)] - [\sigma(\rep{u}w)]
\]
and, on the other hand,
\begin{align*}
d_3i_3(u[\alpha]) 
	&\:=\: [\rep{u}\alpha \star_1 \sigma(\rep{u}w)] - [\sigma(\rep{u}v)], \\
	&\:=\: u[\alpha] + [\sigma(\rep{u}w)] - [\sigma(\rep{u}v)]. \\
\end{align*}
Hence $d_3 i_3 + i_2 d_2=\id_{\Zb\M[\Sigma_2]}$, concluding the proof.
\end{proof}

\subsubsection{Remark}

The proof of Proposition~\ref{Proposition:LowSquierResolution2} uses the fact that~$\Sigma_3$ is a homotopy basis to produce, for every $2$-cell~$\alpha$ of~$\Sigma$ and every~$u$ in~$\M$, a $3$-cell $\sigma(\rep{u}\alpha)$ with the required shape. The hypothesis on~$\Sigma_3$ could thus be modified to only require the existence of such a $3$-cell in~$\tck{\Sigma}$: however, it is proved in~\cite{GuiraudMalbos12advances} that this implies that~$\Sigma_3$ is a homotopy basis.

\medskip
From Proposition~\ref{Proposition:LowSquierResolution2}, we deduce

\begin{theorem}[{\cite[Theorem~3.2]{CremannsOtto94}}, {\cite[Theorem~3]{Lafont95}}, \cite{Pride95}]
\label{Theorem:TDFimpliesFP3}
Let~$\M$ be a finitely presented monoid. If~$\M$ is of finite derivation type, then it is of homological type left-$\FP_3$.
\end{theorem}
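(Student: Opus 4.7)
The plan is to reduce directly to Proposition~\ref{Proposition:LowSquierResolution2}, exploiting finiteness of the coherent presentation coming from finite derivation type.

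First, I would unpack the two hypotheses. Since $\M$ is finitely presented and of finite derivation type, the definition tells us that \emph{every} finite presentation of $\M$ admits a finite homotopy basis (this is exactly the content enabled by the invariance Theorem~\ref{Theorem:Squier2}). So I can choose a finite $2$-polygraph $\Sigma=(\Sigma_0,\Sigma_1,\Sigma_2)$ presenting $\M$, together with a finite cellular extension $\Sigma_3$ of $\tck{\Sigma}$ that is a homotopy basis. The pair $(\Sigma,\Sigma_3)$ is then a finite coherent presentation of $\M$ in the sense of Section~\ref{Section:threeDimensionalPolygraphs}.

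Next, I would feed this coherent presentation into Proposition~\ref{Proposition:LowSquierResolution2}, which supplies a partial free resolution of length $3$
\[
\xymatrix{
\Zb\M[\Sigma_3] \ar[r]^-{d_3}
& \Zb\M[\Sigma_2] \ar[r]^-{d_2}
& \Zb\M[\Sigma_1] \ar[r]^-{d_1}
& \Zb\M \ar[r]^-{\epsilon}
& \Zb \ar[r] & 0
}
\]
of the trivial $\Zb\M$-module $\Zb$. Because $\Sigma_1$, $\Sigma_2$ and $\Sigma_3$ are all finite sets, each of the free $\Zb\M$-modules $\Zb\M$, $\Zb\M[\Sigma_1]$, $\Zb\M[\Sigma_2]$, $\Zb\M[\Sigma_3]$ is finitely generated. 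Free modules are in particular projective, so this is a projective, finitely generated partial resolution of length~$3$, which is exactly the definition of homological type left-$\FP_3$ from~\ref{Definition:LeftFPn}.

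There is essentially no real obstacle: the structural work has already been done in Proposition~\ref{Proposition:LowSquierResolution2}, whose proof constructs the boundary $d_3[\gamma]=[s_2(\gamma)]-[t_2(\gamma)]$ and a contracting homotopy $s_3(u[\alpha])=[\sigma(\rep{u}\alpha)]$, the existence of the latter $3$-cell being precisely where the homotopy basis hypothesis on $\Sigma_3$ is used. The only point worth a sentence of comment is that "finite derivation type for $\M$" must be interpreted via Theorem~\ref{Theorem:Squier2} so that \emph{our chosen} finite presentation $\Sigma$ (not some other one) supports a finite homotopy basis; once this is noted, the conclusion is immediate.
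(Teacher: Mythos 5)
Your proposal is correct and follows essentially the same route as the paper, which derives the theorem directly from Proposition~\ref{Proposition:LowSquierResolution2} applied to a finite coherent presentation of~$\M$, observing that finiteness of $\Sigma_1$, $\Sigma_2$ and $\Sigma_3$ makes the free modules in the length-$3$ resolution finitely generated. The only minor remark is that the detour through Theorem~\ref{Theorem:Squier2} is unnecessary, since the definition of finite derivation type for a category already asserts the existence of \emph{some} finite coherent presentation, which is all the argument needs.
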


By Theorem~\ref{Theorem:Squier}, this implies

\begin{theorem}[{\cite[Theorem~4.1]{Squier87}}]
\label{Theorem:SquierAb1}
If a monoid admits a finite convergent presentation, then it is of homological type left-$\FP_3$.
\end{theorem}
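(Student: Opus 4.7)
\medskip

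The plan is to assemble the result directly from the machinery already developed in the excerpt, using the construction described in the paragraph immediately preceding the statement. Let $\M$ be a monoid admitting a finite convergent presentation $\Sigma$. First I would reduce to the case where $\Sigma$ is \emph{reduced}: by Métivier--Squier's reduction (Theorem in Section~2), $\Sigma$ is Tietze-equivalent to a finite reduced convergent presentation, which we still denote~$\Sigma$. This step is painless but useful because it gives the normal form of every critical branching the clean shape $(\alpha\rep{v},\rep{u}\beta)$ on $\rep{u}\rep{w}\rep{v}$ displayed just before the theorem.

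Next, I form the $(3,1)$-polygraph $\Sr(\Sigma)=(\Sigma,\Sigma_3)$ of Squier's completion, where $\Sigma_3$ contains exactly one $3$-cell $\omega_b$ filling each critical branching $b$. Finiteness of $\Sigma$ implies it has only finitely many overlaps of left-hand sides of rules, so the set of critical branchings is finite, hence $\Sigma_3$ is finite. By Theorem~\ref{Theorem:Squier1}, $\Sr(\Sigma)$ is a coherent presentation of $\M$, i.e., $\Sigma_3$ is a homotopy basis of $\tck{\Sigma}$.

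Now I invoke Proposition~\ref{Proposition:LowSquierResolution2} applied to the coherent presentation $\Sr(\Sigma)$: the sequence
\[
\xymatrix{
\Zb\M[\Sigma_3] \ar [r]^-{d_3} & \Zb\M[\Sigma_2] \ar [r]^-{d_2} & \Zb\M[\Sigma_1] \ar [r]^-{d_1} & \Zb\M \ar [r]^-{\epsilon} & \Zb \ar [r] & 0
}
\]
is a partial free resolution of length~$3$ of the trivial $\Zb\M$-module~$\Zb$. Since $\Sigma_0$, $\Sigma_1$, $\Sigma_2$ and $\Sigma_3$ are all finite, each of the free $\Zb\M$-modules $\Zb\M[\Sigma_k]$ (for $k=0,1,2,3$) is finitely generated. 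By the characterisation of Lemma~\ref{lemme_pl_n}, this means precisely that $\M$ is of homological type left-$\FP_3$.

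There is, strictly speaking, no hard step here: all the substantive work has already been done, in the confluence analysis that produces the family of generating confluences (Theorem~\ref{Theorem:Squier1}) and in the construction of the contracting homotopy $s_3$ in the proof of Proposition~\ref{Proposition:LowSquierResolution2}. The only point requiring any care is the finiteness of $\Sigma_3$: this relies on the classical fact that a finite string rewriting system has finitely many minimal overlaps between left-hand sides, which is immediate since each critical branching corresponds to a nontrivial overlap of two words from the finite set $\{s(\alpha)\mid\alpha\in\Sigma_2\}$. Hence a finite convergent presentation yields a finite coherent presentation, which in turn yields the desired finitely generated free partial resolution of length~$3$.
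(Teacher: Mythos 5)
Your proof is correct and follows essentially the same route as the paper: pass to a (reduced) finite convergent presentation, take Squier's completion to get a finite homotopy basis indexed by the finitely many critical branchings (Theorem~\ref{Theorem:Squier1}), and then apply Proposition~\ref{Proposition:LowSquierResolution2} to obtain a finitely generated free partial resolution of length~$3$. The paper's only additional content is the explicit description, via the leftmost normalisation strategy on a reduced presentation, of the shape of the generating $3$-cells $\omega_b$, which your argument does not need.
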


\subsubsection{Example}
\label{Example:Monoidaa}

Let us consider the monoid~$\M$ with the convergent presentation
\[
\pres {a} {aa\odfll{\mu}a}.
\]
With the leftmost normalisation strategy~$\sigma$, we get, writing the $2$-cell~$\mu$ as a string diagram~\twocell{mu}:
\[
\sigma(a) \:=\: 1_a
\qquad
\sigma(aa) \:=\: \twocell{mu}
\qquad
\sigma(aaa) \:=\: \mu a \star_1 \mu \:=\: \twocell{(mu *0 1) *1 mu} \,.
\]
The presentation has exactly one critical branching, whose corresponding generating confluence can be written in the  two equivalent ways
\[
\vcenter{\xymatrix @!C @R=1em @C=1em {
& aa
  \ar@2@/^/ [dr]^{\twocell{mu}}
  \ar@3 []!<-7.5pt,-12.5pt>;[dd]!<-7.5pt,12.5pt> ^-*+{\twocell{asso}}
\\
aaa 
  \ar@2@/^/ [ur]^{\twocell{1}\:\twocell{mu}} 
  \ar@2@/_/ [dr]_{\twocell{mu}\:\twocell{1}}
&& a 
\\
& aa
  \ar@2@/_/ [ur]_{\twocell{mu}}
}}
\qquad\text{or}\qquad
\vcenter{\xymatrix@C=3em{
{\twocell{(1 *0 mu) *1 mu}}  
	\ar@3 [r] ^-*+{\twocell{asso}}
& {\twocell{(mu *0 1) *1 mu}}
}}\;.
\]
The $\Zb\M$-module~$\ker d_2$ is generated by
\begin{align*}
d_3\big[\twocell{asso}\big] &=
\big[\;\twocell{(1 *0 mu) *1 mu}\;\big]
- 
\big[\;\twocell{(mu *0 1) *1 mu}\;] \\
&=
\big[\twocell{1}\:\twocell{mu} \big] + \big[\twocell{mu}\big] 
- 
\big[\twocell{mu}\:\twocell{1}\big] - \big[\twocell{mu}\big]\\
&= a \big[\twocell{mu}\big] - \big[\twocell{mu}\big].
\end{align*}

\subsubsection{Resolutions from convergent presentations}

In~\cite{GuiraudMalbos12advances}, the results presented here are generalised to produce a free resolution 
\[
\xymatrix{
(\cdots)
	\ar [r] ^-{d_{n+1}}
& \Zb\M[\Sigma_n]
	\ar [r] ^-{d_n}
& \Zb\M[\Sigma_{n-1}]
	\ar [r] ^-{d_{n-1}}
& (\cdots)
	\ar [r] ^-{d_2}
& \Zb\M[\Sigma_1]
	\ar [r] ^-{d_1}
& \Zb\M[\Sigma_0]
	\ar [r] ^-{\epsilon}
& \Zb
	\ar [r]
& 0
}
\]
of the trivial $\Zb\M$-module~$\Zb$ from a convergent presentation of a monoid~$\M$. For~$k\geq 4$, the $\Zb\M$-module $\Zb\M[\Sigma_k]$ is defined as the free $\Zb\M$-module over a family~$\Sigma_k$ of $k$-cells obtained from the $(k-1)$-fold critical branchings.  For example, the $4$-cell
\[
\xymatrix @C=2em @R=1em{
& { \twocell{(1 *0 mu *0 1) *1 (1 *0 mu) *1 mu}}
	\ar@3 [rr] 
		^-*+{\twocell{(1 *0 mu *0 1) *1 asso}} _-{}="src"
&& { \twocell{(1 *0 mu *0 1) *1 (mu *0 1) *1 mu}}
	\ar@3 [dr] ^-{\twocell{(asso *0 1) *1 mu}} 
\\
{ \twocell{(2 *0 mu) *1 (1 *0 mu) *1 mu}}
	\ar@3 [ur] ^-{\twocell{(1 *0 asso) *1 mu}}
	\ar@3 [drr] _-*+{\twocell{(2 *0 mu)*1 asso}}
&&&& { \twocell{(mu *0 2) *1 (mu *0 1) *1 mu}}
\\
&& { \twocell{(mu *0 mu) *1 mu}}
	\ar@3 [urr] _-*+{\twocell{(mu *0 2)*1 asso}}
\ar@4{ ->} "src"!<0pt,-30pt>; "3,3"!<0pt,35pt>  ^-*+{\twocell{penta}}
}
\]
is the only element of~$\Sigma_4$ in the case of the monoid of Example~\ref{Example:Monoidaa}.
A related resolution is obtained in~\cite{Kobayashi90} using antichains instead of $k$-fold critical branchings.

\subsubsection{Other homological finiteness conditions}
\label{AnotherHomologicalType}

In the definition \ref{Definition:LeftFPn} of homological type left-$\FP_n$ for a monoid~$\M$, the replacement of left modules by right modules, bimodules or natural systems gives the definitions of the homological types right-$\FP_n$, bi-$\FP_n$ and~$\FP_n$, for every~$0\leq n\leq\infty$. We refer the reader to~\cite[Section 5.2]{GuiraudMalbos12advances} for the relations between these different finiteness conditions. In particular, for~$n=3$, all of these homotopical conditions are consequences of the finite derivation type property. The proof is similar to the one for the left-$\FP_3$ property in Section~\ref{SquierHomologicalTheorem}: for example, in the case of the right-$\FP_3$ property, we consider right modules and, to get the contracting homotopy, we construct a right normalisation strategy~$\sigma$ by defining a $3$-cell $\sigma(\alpha\rep{u})$ with shape
\[
\xymatrix @!C @C=2em @R=2em {
& {w\rep{u}}
	\ar@2@/^/ [dr] ^-{\sigma(w\rep{u})}
\\
{v\rep{u}}
	\ar@2@/^/ [ur] ^-{\alpha\rep{u}}
	\ar@2@/_/ [rr] _-{\sigma(v\rep{u})} ^-{}="tgt"
&& {\rep{vu}}
\ar@3 "1,2"!<-10pt,-17.5pt>;"tgt"!<-10pt,12.5pt> ^-{\sigma(\alpha\rep{u})}
}
\]
for any generating $2$-cell $\alpha:v\dfl w$ and~$u$ in the monoid.

\section{Squier's example and variant}

\subsection{Squier's example}
\label{Example:MonoidSk}

In \cite{Squier87}, Squier defines, for every~$k\geq 1$, the monoid~$\S_k$ presented by
\[
\bigpres
	{a, b ,t , x_1, \dots, x_k, y_1, \dots, y_k}
	{(\alpha_n)_{n\in\Nb},\; (\beta_i)_{1\leq i\leq k},\; (\gamma_i)_{1\leq i\leq k},\; (\delta_i)_{1\leq i\leq k},\; (\epsilon_i)_{1\leq i\leq k}}
\]
with
\[
at^nb \odfll{\alpha_n} 1,\quad
x_ia \odfll{\beta_i} atx_i,\quad
x_it \odfll{\gamma_i} tx_i,\quad
x_ib \odfll{\delta_i} bx_i,\quad
x_iy_i \odfll{\epsilon_i} 1.
\]
In~\cite{Squier94}, Squier proves the following properties for~$\S_1$. With similar arguments, the result extends to every monoid~$\S_k$, for~$k\geq 1$. 

\begin{theorem}[{\cite[Theorem~6.7, Corollary~6.8]{Squier94}}]
\label{Theorem:MonoidS1}
For every~$k\geq 1$, the monoid~$\S_k$ satisfies the following properties:
\begin{enumerate}[{\bf i)}]
\item it is finitely presented,
\item it has a decidable word problem,
\item it is not of finite derivation type,
\item it admits no finite convergent presentation.
\end{enumerate}
\end{theorem}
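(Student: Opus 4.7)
The plan is to treat the four items in sequence; (iv) will follow immediately from (iii) via Theorem~\ref{Theorem:Squier}, so the real work lies in (i)--(iii), with (iii) being the main obstacle. For (i), I exhibit an explicit finite presentation $\Sigma$ of $\S_k$, namely the one obtained from $\Sigma^{\mathrm{Sq}_k}$ by keeping only $\alpha_0 : ab \dfl 1$ from the infinite family $(\alpha_n)_{n\in\Nb}$. The key lemma is that $at^n b \equiv 1$ for every $n\geq 0$ modulo $\Sigma$, proved by induction on $n$: the base case is $\alpha_0$, and for the induction step, using $\beta_i, \gamma_i, \delta_i$ one computes
\[
x_i \:\equiv\: x_i \cdot at^n b \:\equiv\: at x_i t^n b \:\equiv\: at^{n+1} x_i b \:\equiv\: at^{n+1} b x_i,
\]
so that right-multiplication by $y_i$ followed by $\epsilon_i$ yields $1 \equiv at^{n+1} b$. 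Iterated elementary Tietze transformations then eliminate each $\alpha_n$ with $n\geq 1$ and identify $\Sigma$ with $\Sigma^{\mathrm{Sq}_k}$ as presentations of the same monoid $\S_k$.

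For (ii), I would verify that $\Sigma^{\mathrm{Sq}_k}$ with its given orientation is a convergent (infinite) $2$-polygraph in the sense of~\ref{Section:NormalFormAlgorithm}: termination uses a well-founded weight combining word length with a count of inversions of each $x_i$ past letters in $\{a,t,b\}$, while confluence reduces to a uniform check of critical branchings---overlaps among $\alpha_n, \beta_i, \gamma_i, \delta_i, \epsilon_i$, all described by finitely many schemas parametrised by $n$, each closing routinely. Although the rule set is infinite, for a fixed input word $w$ only the rules $\alpha_n$ with $n \leq \abs{w}$ can ever apply, so detecting a redex in $w$ and choosing a rewriting step are effective operations; the normal-form procedure therefore terminates and decides the word problem.

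The core of the theorem is (iii). By Theorem~\ref{Theorem:Squier2} it is enough to show that the finite presentation $\Sigma$ from (i) admits no finite homotopy basis. For each index $i$ and each $n \geq 0$ I would isolate a $2$-sphere $\Omega_{n,i} = (\phi_{n,i}, \psi_{n,i})$ in $\tck{\Sigma}$, where $\phi_{n,i} : x_i a t^n b \dfl x_i$ first pushes $x_i$ rightwards through $at^n b$ via $\beta_i, \gamma_i, \delta_i$ (landing on $at^{n+1} b\, x_i$) and then contracts using the derived reduction $at^{n+1} b \equiv 1$, while $\psi_{n,i}$ simply contracts via the derived $at^n b \equiv 1$ on the right. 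The plan is to construct an abelian-group-valued invariant $\Psi$ on the $2$-cells of $\tck{\Sigma}$, compatible with $\star_0$, $\star_1$ and $(2,1)$-inversion, vanishing on the source-target pair of every $3$-cell in an arbitrarily chosen finite cellular extension $\Gamma$, yet such that the elements $\Psi(\phi_{n,i}) - \Psi(\psi_{n,i})$ span an infinite-rank subgroup as $n$ varies. Informally, $\Psi$ records, for each application of $\beta_i$ inside a $2$-cell, the ``depth'' of the ambient $at^n b$-pattern being traversed, in the spirit of the Reidemeister--Fox-type invariants developed in Section~\ref{Section:homologicalFinitenessCondition}. The main difficulty is twofold: making $\Psi$ descend to the quotient by the exchange and inversion relations defining $\tck{\Sigma}$, and simultaneously checking that a finite $\Gamma$ can only annihilate finitely many of the classes $\Psi(\Omega_{n,i})$. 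Once this is done, Proposition~\ref{Proposition:FiniteBasisExtracted} forbids any finite homotopy basis, and (iv) follows directly from Theorem~\ref{Theorem:Squier}, since a finite convergent presentation would imply finite derivation type, which has just been excluded.
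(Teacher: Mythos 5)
Your items (i), (ii) and (iv) follow the paper's route: the finite presentation obtained by discarding the $\alpha_n$ for $n\geq 1$ is exactly Proposition~\ref{propS1finitetype}, the convergence of the infinite presentation together with the regular-language redex test is Propositions~\ref{propS1convergent} and~\ref{propS1decidable}, and (iv) is the same immediate consequence of Theorem~\ref{Theorem:Squier}. One caveat on (ii): your termination order (word length combined with a count of inversions of the $x_i$ past letters of $\{a,t,b\}$) does not obviously work, since an application of $\beta_i$ creates a new letter $t$ that increases the inversion count of every $x_j$ occurring to its left; the paper uses an exponential, derivation-based weight precisely to absorb this effect. This is repairable, but not routine.

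The genuine gap is in (iii). Your plan is to build an abelian-group-valued invariant $\Psi$ of the $2$-cells of $\tck{\Sigma}$, compatible with $\star_0$, $\star_1$ and inversion, such that the differences $\Psi(\phi_{n,i})-\Psi(\psi_{n,i})$ span an infinite-rank subgroup while any finite homotopy basis would force them all to vanish. You defer the construction of $\Psi$, but the problem is not only that the hard step is missing: for $k=1$ no such invariant can exist. An invariant of parallel $2$-cells that is additive under the compositions and inversion factors through the abelianised module of $2$-spheres, which is precisely the module $\Ker d_2$ of homological $2$-syzygies of Section~\ref{Section:homologicalFinitenessCondition}; exhibiting infinitely many independent classes $\Psi(\phi_{n,i})-\Psi(\psi_{n,i})$ would show that this module is not finitely generated, i.e.\ that $\S_1$ is not of homological type left-$\FP_3$. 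But $\S_1$ \emph{is} of type left-$\FP_\infty$ (see the remark following Proposition~\ref{Proposition:MonoidSk}); the whole interest of this example is that its failure of finite derivation type is invisible to Reidemeister--Fox-style abelian invariants. The paper's proof of Proposition~\ref{propS1nottdf} therefore works differently: it first uses the transfer theorem to produce the explicit infinite homotopy basis $(\pi(A_n))$, then invokes the rigidity of parallel $3$-cells (Proposition~\ref{Proposition:CriticalTriples}, available because $\Sigma^{\mathrm{Sq}_1}$ is reduced and has no triple critical branching) to derive the exact identity~\eqref{piAn}, in which $A_{n+1}$ occurs with the non-invertible coefficient $1-\cl{yx}$; only after this genuinely $3$-dimensional step does a linearisation --- now of $3$-cells over the known infinite basis, as a free right module --- produce the contradiction $\cl{yx}=1$. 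Your strategy could be salvaged homologically for $k\geq 2$ via Proposition~\ref{Proposition:MonoidSk}, but for $k=1$ this extra coherence input is indispensable, and without it the proof does not go through.
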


In~\cite{Squier87}, Squier had already proved 

\begin{proposition}[{\cite[Example 4.5.]{Squier87}}]
\label{Proposition:MonoidSk}
For~$k\geq 2$, the monoid~$\S_k$ is not of finite homological type left-$\FP_3$ and, as a consequence, it does not admit a finite convergent presentation.
\end{proposition}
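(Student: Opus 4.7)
The plan is to apply Lemma~\ref{lemme_pl_n}: from the finite presentation
\[
\Sigma \:=\: \big\langle\; a, b, t, x_1, \ldots, x_k, y_1, \ldots, y_k \;\big|\; \alpha, \beta_i, \gamma_i, \delta_i, \epsilon_i \;\big\rangle
\]
of $\S_k$ given in the Introduction, Proposition~\ref{Proposition:LowSquierResolution} produces a free and finitely generated partial resolution
\[
\xymatrix{
\Zb\S_k[\Sigma_2] \ar[r]^-{d_2}
& \Zb\S_k[\Sigma_1] \ar[r]^-{d_1}
& \Zb\S_k \ar[r]^-{\epsilon}
& \Zb \ar[r]
& 0
}
\]
of length $2$, so $\S_k$ is of homological type left-$\fp_3$ if and only if the $\Zb\S_k$-module $\Ker d_2$ is finitely generated. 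My goal is to exhibit, for every $k \geq 2$, an infinite family in $\Ker d_2$ that cannot be reduced to finitely many generators.

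To build that family, I would transport the critical branchings of the Tietze-equivalent infinite presentation $\Sigma^{\mathrm{Sq}_k}$, in which the relations $\alpha_n : at^n b \dfl 1$ are explicit, back to $\tck{\Sigma}$. Each equality $at^n b = 1$ holds in $\S_k$ (derivable inductively by conjugating $ab=1$ through $\beta_i, \gamma_i, \delta_i, \epsilon_i$), hence is witnessed by a $2$-cell $f_n : at^n b \dfl 1$ of $\tck{\Sigma}$; likewise, an explicit $2$-cell $g_{i, n} : at x_i t^n b \dfl x_i$ of $\tck{\Sigma}$ is obtained by sliding $x_i$ past $t^n b$ using $\gamma_i$ and $\delta_i$ and then applying $f_{n+1}$ in context. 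The critical branching $(\beta_i t^n b, x_i \alpha_n)$ of $\Sigma^{\mathrm{Sq}_k}$ then translates into
\[
c_{i, n} \:=\: [\beta_i t^n b \star_1 g_{i, n}] \:-\: [x_i f_n]
\]
in $\Zb\S_k[\Sigma_2]$. Using the bracket identities $[ufv] = \cl{u}[f]$ and $[f \star_1 g] = [f] + [g]$ together with $[x_i a t^n b] = [x_i] + x_i [a t^n b]$, one checks that both summands have $d_2$-image $[x_i a t^n b] - [x_i]$, so $c_{i, n} \in \Ker d_2$.

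The core of the proof, and the main obstacle, is then to show that the family $\{c_{i, n} : 1 \leq i \leq k,\, n \in \Nb\}$ does not lie in any finitely generated $\Zb\S_k$-submodule of $\Ker d_2$. My approach is to push the whole resolution along the abelianisation ring morphism
\[
\phi \::\: \Zb\S_k \:\to\: R \:=\: \Zb[a^{\pm 1}, x_1^{\pm 1}, \ldots, x_k^{\pm 1}], \qquad b \mapsto a^{-1},\ t \mapsto 1,\ y_i \mapsto x_i^{-1},
\]
which is well-defined because every relation of $\Sigma$ becomes a true equation in $R$, and then compute $\phi(c_{i, n}) \in R[\Sigma_2]$ explicitly in order to locate a quotient of $R[\Sigma_2]$ in which the images of the $c_{i, n}$ still span an infinitely generated $R$-module. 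The delicate point is that coarse invariants are insufficient: the augmentation kills the dependence on $n$, and the $[\alpha]$-components of $\phi(c_{i, n})$ all lie in the principal ideal $(x_1 - x_i) R$. The separating invariant must therefore exploit the dependence of the $[\gamma_i]$- and $[\delta_i]$-components of $c_{i, n}$ on the index $i$, which is precisely where the hypothesis $k \geq 2$ is essential — recall that, for $k = 1$, the Introduction records that $\S_1$ is of type left-$\fp_3$. The explicit construction of such an invariant, and the verification that the images $\phi(c_{i, n})$ are not contained in any finitely generated $R$-submodule of its range, is Squier's original computation in~\cite{Squier87}, to which I would refer for the detailed algebra.
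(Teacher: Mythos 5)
The paper does not actually prove Proposition~\ref{Proposition:MonoidSk}: it is recorded as a citation of Squier's original computation \cite{Squier87}, so there is no internal proof to compare yours against. Judged on its own terms, your reduction is sound: by Lemma~\ref{lemme_pl_n}~(iii) applied to the free, finitely generated length-$2$ resolution of Proposition~\ref{Proposition:LowSquierResolution}, the statement is equivalent to $\Ker d_2$ not being a finitely generated $\Zb\S_k$-module; your elements $c_{i,n}$ are genuinely in $\Ker d_2$, since the two $2$-cells in each pair are parallel in $\tck{\Sigma}$ and $d_2[f]=[s(f)]-[t(f)]$ holds for every $2$-cell of $\tck{\Sigma}$; and starting from the finite presentation with the single cell $\alpha$ is legitimate (the argument of Proposition~\ref{propS1finitetype} eliminates the higher $\alpha_n$ for every $k$).

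The genuine gap is at exactly the point you yourself flag as ``the core of the proof'': you never construct the invariant that separates the family $\ens{c_{i,n}}$ from every finitely generated submodule. The abelianisation $\phi$ is well defined, but you give no computation of $\phi(c_{i,n})$, no identification of the quotient of $R[\Sigma_2]$ in which the images are to be examined, and no verification that those images span an infinitely generated $R$-module; all of this is delegated back to \cite{Squier87}. Since this is precisely where the hypothesis $k\geq 2$ must do its work (for $k=1$ the analogous family \emph{is} absorbed into a finitely generated module, as $\S_1$ is left-$\FP_\infty$), and since it constitutes the entire mathematical content of the proposition, what you have is a correct framing with its decisive step outsourced rather than a proof. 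To close the gap you would need to carry out the explicit computation of the $[\gamma_i]$- and $[\delta_i]$-components of $\phi(c_{i,n})$, exhibit the separating functional, and prove the non-finite generation of its image --- or, alternatively, adapt to left modules the kind of direct argument used in Proposition~\ref{propS1nottdf}, where an explicit free-module computation (there, the coefficient $\cl{yx}-1$) produces the contradiction.
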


Proposition~\ref{Proposition:MonoidSk} does not hold for~$\S_1$. Indeed, this monoid is of homological type left-$\FP_{\infty}$, {\cite[Example 4.5]{Squier87}}. This proves that, if finite derivation type implies left-$\FP_3$, the reverse implication does not hold for general monoids. However, in the special case of groups, the property of having finite derivation type is equivalent to the homological finiteness condition left-$\FP_3$~\cite{CremannsOtto96}. The latter result is based on the Brown-Huebschmann isomorphism between homotopical and homological syzygies~\cite{BrownHuebschmann82}.

\subsection{Proof of Theorem~\ref{Theorem:MonoidS1}}

Let us prove the result in the case of the monoid~$\S_1$, with the following infinite presentation:
\[
\Sq \:=\: \pres {a,b,t,x,y} {(\alpha_n)_{n\in \Nb},\; \beta,\; \gamma,\; \delta, \;\epsilon}
\]
with
\[
at^nb \odfll{\alpha_n} 1,\quad
xa\odfll{\beta} atx,\quad 
xt\odfll{\gamma} tx,\quad
xb\odfll{\delta} bx,\quad
xy\odfll{\epsilon} 1.
\]
In what follows, we denote by $\gamma_n:xt^n\dfl t^n x$ the $2$-cell of $\Sq^*$ defined by induction on~$n$ as follows:
\[
\gamma_0 \:=\: 1_x
\qquad
\text{and}
\qquad
\gamma_{n+1} \:=\: \gamma t^n \star_1 t \gamma_n.
\]
For every~$n$, we write $f_n:xat^b\dfl at^{n+1}bx$ the $2$-cell of $\Sq^*$ defined as the following composite: 
\[
\xymatrix @C=3em {
xat^nb
	\ar@2 [r] ^-{\beta t^n b} 
& atxt^n b
	\ar@2 [r] ^-{at\gamma_n b}
& at^{n+1}xb 
	\ar@2 [r] ^-{at^{n+1}\delta}
& at^{n+1}bx.
}
\]
We note that~$f_n$ contains no $2$-cell~$\alpha_k$.

\begin{proposition}
\label{propS1finitetype}
The monoid~$\S_1$ admits the finite presentation $\tilde{\Sq} = \pres {a, b, t, x, y} {\alpha_0, \beta, \gamma, \delta, \epsilon}$.
\end{proposition}

\begin{proof}
For every natural number~$n$, we consider the following $2$-sphere of~$\tck{\Sq}$:
\begin{equation}
\label{An}
\vcenter{\xymatrix @!C @C=0em @R=2em {
& at^{n+1}bxy
	\ar@2 [rr] ^-{at^{n+1}b\epsilon} 
&& at^{n+1}b
	\ar@2@/^/ [dr] ^-{\alpha_{n+1}}
\\
xat^nby 
	\ar@2@/^/ [ur] ^-{f_n y}
	\ar@2@/_/ [drr] _-{x\alpha_n y} 
&&&& 1
\\
&& xy
	\ar@2@/_/ [urr] _-{\epsilon}
}}
\end{equation}
Thus, the $2$-cell~$\alpha_{n+1}$ is parallel to the composite $2$-cell
\begin{equation}
\label{alphan}
at^{n+1}b\epsilon^- \star_1 f_n^- y \star_1 x\alpha_n y \star_1 \epsilon.
\end{equation}
Since~$f_n$ contains no~$\alpha_k$, this proves the result by induction on~$n$.
\end{proof}

\begin{proposition}
\label{propS1convergent}
The $2$-polygraph~$\Sq$ is convergent and Squier completion of~$\Sq$ contains a $3$-cell~$A_n$ with shape
\[
\xymatrix @!C @C=2em {
& at^{n+1}bx
	\ar@2@/^/ [dr] ^-{\alpha_{n+1}x}
\\
xat^nb
	\ar@2@/^/ [ur] ^-{f_n}
	\ar@2@/_/ [rr] _-{x\alpha_n} ^{}="tgt"
\ar@3 "1,2"!<0pt,-15pt>;"tgt"!<0pt,10pt> ^-*+{A_n}
&& x
}
\]
for every natural number~$n$.
\end{proposition}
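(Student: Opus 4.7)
My plan is to verify termination and confluence of $\Sigma^{\mathrm{Sq}_1}$ separately and then to identify the $3$-cells $A_n$ as the generating confluences of the critical branchings. For termination I would use a polynomial interpretation, assigning to each generating $1$-cell a strictly monotone function $\Nb \fl \Nb$ and interpreting concatenation as composition. The choice $[x](n) = 2n+1$, $[y](n) = n$, $[a](n) = n+2$, $[t](n) = n+1$ and $[b](n) = n+1$ works: every rule strictly decreases this interpretation in any context, e.g.\ $[xa](n) = 2n+5 > 2n+4 = [atx](n)$ for $\beta$, $[xt](n) = 2n+3 > 2n+2 = [tx](n)$ for $\gamma$ (and similarly for $\delta$), $[xy](n) = 2n+1 > n$ for $\epsilon$, and $[at^nb](m) = m+n+3 > m$ for $\alpha_n$.

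For confluence I would enumerate the critical branchings of $\Sigma^{\mathrm{Sq}_1}$ by examining prefix--suffix and inclusion overlaps among the left-hand sides $xa$, $xt$, $xb$, $xy$ and $at^nb$. The non-$x$ suffixes $t$, $b$, $y$, $t^k b$ do not begin any left-hand side, and no left-hand side is a proper factor of another (since $at^nb$ contains no $x$ and $at^mb$ is never a proper factor of $at^nb$), so the only non-trivial overlap is between the suffix $a$ of $xa$ and the prefix $a$ of $at^nb$. Hence, for each $n \in \Nb$, there is exactly one critical branching $(\beta t^nb,\, x\alpha_n)$ with source $xat^nb$. Closing it on the $\beta$-side with the composite $f_n$ defined just above the statement and then with $\alpha_{n+1}x$ yields the common reduct $x$, whereas $x\alpha_n$ reaches $x$ in a single step. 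The resulting confluence diagram has exactly the shape of the required $3$-cell $A_n$, which can therefore be chosen as the generating confluence of this critical branching in $\Sr(\Sigma^{\mathrm{Sq}_1})$. Confluence of all critical branchings together with termination then gives convergence by Newman's lemma.

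The main obstacle is termination, because the deglex order fails here: each of $\beta$, $\gamma$ and $\delta$ strictly increases word length, so one must resort to a polynomial (or equivalent weighted) interpretation in which $[x]$ grows linearly fast enough to absorb the letters $a$, $t$, $b$ inserted to the left of $x$ by $\beta$. Once termination is secured, the critical-pair analysis is short thanks to the very specific shape of the left-hand sides, and for each $n$ the confluence diagram is essentially the definition of $f_n$ followed by $\alpha_{n+1}x$.
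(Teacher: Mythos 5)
Your proof is correct, and its overall architecture (termination, then exhaustive critical-branching analysis, then Newman's lemma and the choice of $A_n$ as the generating confluence) is the same as the paper's: the enumeration of overlaps, the identification of the unique family of critical branchings $(\beta t^n b, x\alpha_n)$ with source $xat^nb$, and the closure of each one via $f_n$ followed by $\alpha_{n+1}x$ against the single step $x\alpha_n$ match the paper exactly. The one genuine difference is the termination argument. The paper builds a two-component order from a pair of maps $u_*:\Nb\fl\Nb$ and a ``derivation'' $\partial(u):\Nb\fl\Nb$ with exponential weights $\partial(a)(n)=3^n$, $\partial(b)(n)=\partial(y)(n)=2^n$, comparing parallel $1$-cells by $u_*\leq v_*$ and $\partial(u)<\partial(v)$; you instead use a single strictly monotone linear interpretation $[x](n)=2n+1$, $[a](n)=n+2$, $[t](n)=[b](n)=n+1$, $[y](n)=n$. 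Your computations check out on every rule, including the length-increasing ones $\beta$, $\gamma$, $\delta$, where the factor $2$ in $[x]$ absorbs the letters inserted to the left of $x$; strict monotonicity of each generator's interpretation propagates the strict decrease through arbitrary contexts, and evaluating at $0$ yields a strictly decreasing natural number along any rewriting sequence, hence termination. So your interpretation is a legitimate and arguably more elementary replacement for the paper's derivation-based order: it avoids exponentials and is quicker to verify, at the cost of being a more ad hoc choice, whereas the paper's construction instantiates a general method (imported from the $3$-polygraph setting of~\cite{GuiraudMalbos09}) that is designed to scale to higher-dimensional rewriting.
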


\begin{proof}
Let us prove that~$\Sq$ terminates. For that, we build a termination order based on derivations, similar to the method of~\cite[Theorem~4.2.1]{GuiraudMalbos09} for $3$-polygraphs. We associate, to every $1$-cell~$u$ of~$\Sq^*$, two maps
\[
u_* \::\: \Nb \:\fl\: \Nb
\qquad\text{and}\qquad
\partial(u) \::\: \Nb \:\fl\: \Nb
\]
as follows. First, we define them on the $1$-cells of~$\Sq$:
\[
x_*(n) = n+1,
\qquad a_*(n) = b_*(n) = t_*(n) = y_*(n) = n,
\]
\[
\partial(a)(n) = 3^n,
\qquad \partial(b)(n) = \partial(y)(n) = 2^n,
\qquad \partial(t)(n) = \partial(x)(n) = 0.
\]
Then, we extend the mappings to every $1$-cell of~$\Sq^*$ thanks to the following relations:
\[
1_*(n) = n,
\qquad (uv)_*(n) = v_*(u_*(n)),
\qquad \partial(1)(n) = 0,
\qquad \partial(uv)(n) = \partial(u)(n) + \partial(v)(u_*(n)).
\]
We compare parallel $1$-cells of~$\Sq^*$ by the order generated by~$u<v$ if~$u_*\leq v_*$ and~$\partial(u)<\partial(v)$. The defining relations of~$(\cdot)_*$ and~$\partial$ imply that the composition of $1$-cells of~$\Sq^*$ is strictly monotone in both arguments. The natural order on~$\Nb$ implies that every decreasing family of parallel $1$-cells of~$\Sq^*$ is stationary. To get a termination order, hence the termination of~$\Sq$, there remains to check that~$u>v$ for every $2$-cell~$u\dfl v$ of~$\Sq$. Indeed, we check that the following (in)equalities are satisfied:
\[
(at^k b)_*(n) = n = 1_*(n),
\qquad
(xa)_*(n) = n+1 = (atx)_*(n), 
\qquad
(xt)_*(n) = n+1 = (tx)_*(n),
\]
\[
(xb)_*(n) = n+1 = (bx)_*(n),
\qquad
(xy)_*(n) = n+1 > n = 1_*(n),
\]
and
\[
\partial(at^k b)(n) = 3^n + 2^n > 0 = \partial(1)(n),
\qquad
\partial(xa)(n) = 3^{n+1} > 2^n + 3^n = \partial(atx)(n),
\]
\[
\partial(xt)(n) = 2^{n+1} > 2^n = \partial(tx)(n),
\qquad
\partial(xb)(n) = 2^{n+1} > 2^n = \partial(bx)(n),
\]
\[
\partial(xy)(n) = 2^{n+1} > 0 = \partial(1)(n).
\]
Let us prove that~$\Sq$ is confluent and compute Squier completion of~$\Sq$. The $2$-polygraph~$\Sq$ has exactly one critical branching $(\beta t^n b,x\alpha_n)$ for every natural number~$n$, and each of those critical branchings is confluent, yielding the $3$-cell~$A_n$. We conclude thanks to Theorem~\ref{Theorem:Squier1}.
\end{proof}

\begin{proposition}
\label{propS1decidable}
The monoid~$\S_1$ has a decidable word problem.
\end{proposition}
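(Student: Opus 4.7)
The plan is to exploit the convergent presentation $\Sigma^{\mathrm{Sq}_1}$ from Proposition~\ref{propS1convergent} and apply the normal-form procedure of Section~\ref{Section:NormalFormAlgorithm}, in spite of $\Sigma^{\mathrm{Sq}_1}$ being infinite. By convergence, each $1$-cell $u$ of $(\Sigma^{\mathrm{Sq}_1})_1^*$ admits a unique normal form $\rep{u}$, and two $1$-cells $u$ and $v$ satisfy $\cl{u}=\cl{v}$ in $\S_1$ if, and only if, $\rep{u}=\rep{v}$ in $(\Sigma^{\mathrm{Sq}_1})_1^*$. Thus the problem reduces to exhibiting an algorithm that computes $\rep{u}$ from $u$.

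The key observation is that, although the set $\Sigma^{\mathrm{Sq}_1}_2$ of rewriting rules is infinite because of the family $(\alpha_n)_{n\in\Nb}$, the set of rules applicable to a fixed word $u$ is effectively computable. Indeed, the sources of the generating $2$-cells are the words $at^nb$ for $n\in\Nb$ together with the finitely many words $xa$, $xt$, $xb$ and $xy$. Given a word $u$, it is decidable whether $u$ contains a factor of the shape $at^nb$ for some $n\geq 0$, by scanning $u$ for each occurrence of the letter $a$ and testing whether it is followed by a (possibly empty) block of $t$'s ended by a $b$ before another letter appears; only factors with $n$ at most the length of $u$ can occur. The four remaining patterns are checked directly. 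Hence one can algorithmically list the rewriting steps applicable to $u$.

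The normal-form procedure then consists in iterating any such rewriting step until no rule applies. Termination of $\Sigma^{\mathrm{Sq}_1}$, established in Proposition~\ref{propS1convergent}, guarantees that this process halts after finitely many steps, and confluence guarantees that the result is the unique normal form $\rep{u}$, independently of the strategy chosen at each step. Applying this procedure to two given $1$-cells $u$ and $v$ of $(\Sigma^{\mathrm{Sq}_1})_1^*$ and comparing the resulting words letter by letter decides the equality $\cl{u}=\cl{v}$ in $\S_1$, which solves the word problem for $\S_1$.

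There is no serious obstacle: the only point that requires care is the effectivity of recognising the infinitely many sources $at^nb$, and this is a straightforward pattern-matching argument. Everything else is a direct application of the normal-form procedure of~\ref{Section:NormalFormAlgorithm}, which only requires the presentation to be convergent and the test ``is this word a normal form?'' to be decidable, both of which hold here.
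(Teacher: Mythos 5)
Your proposal is correct and takes essentially the same approach as the paper: both note that the only obstacle to the normal-form procedure is the infinite family $(\alpha_n)_{n\in\Nb}$, whose sources form the pattern $at^nb$ (the regular language $at^*b$), so that reducibility of any given word is decidable by a finite scan, after which termination and confluence of $\Sigma^{\mathrm{Sq}_1}$ from Proposition~\ref{propS1convergent} yield unique computable normal forms and hence a decision procedure.
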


\begin{proof}
The convergent presentation~$\Sq$ of~$\S_1$ is infinite, so that the normal-form procedure cannot be applied. However, the sources of the $2$-cells~$\alpha_n$ are exactly the elements of the regular language~$at^*b$. This implies that the sources of the $2$-cells of~$\Sq$ form a regular language over the finite set~$\ens{a,b,t,x,y}$: by~\cite[Proposition~3.6]{OttoKatsuraKobayashi98}, this implies that the word problem of~$\S_1$ is decidable.
\end{proof}

In order to show that~$\S_1$ is not of finite derivation type, by Theorem~\ref{Theorem:Squier2}, it is sufficient to check that the finite presentation~$\tilde{\Sq}$ of~$\S_1$ given in Proposition~\ref{propS1finitetype} admits no finite homotopy basis. We denote by 
\[
\pi : \tck{\Sq} \longrightarrow \tck{\tilde{\Sq}}
\]
the projection that sends the $2$-cells~$\beta$, $\gamma$, $\delta$ and~$\epsilon$ to themselves and whose value on~$\alpha_n$ is given by induction on~$n$, thanks to~\eqref{alphan}, i.e.\
\[
\pi(\alpha_0) \:=\: \alpha
\qquad\text{and}\qquad
\pi(\alpha_{n+1}) \:=\: g_n^- \star_1 x\pi(\alpha_n)y \star_1 \epsilon
\]
where 
\[
g_n \:=\: f_n y \star_1 at^{n+1}b\epsilon.
\]
By application of the homotopy basis transfer theorem~\ref{Theorem:HomotopyBasesTransfer} to~$\Sq$ and~$\tilde{\Sq}$, with~$F$ the canonical inclusion of~$\tck{\tilde{\Sq}}$ into~$\tck{\Sq}$, with~$G=\pi$ and with~$\tau$ mapping each $1$-cell~$u$ of~$\tck{\tilde{\Sq}}$ to~$1_u$, we obtain

\begin{lemma}
The monoid~$\S_1$ admits the coherent presentation $\cohpres {a, b, t, x, y}{\alpha_0, \beta, \gamma, \delta, \epsilon}{(\tilde{A}_n)_{n\in\Nb}}$ where~$\tilde{A}_n$ is the $3$-cell
\[
\xymatrix @!C @C=2em {
& at^{n+1}bx
	\ar@2@/^/ [dr] ^-{\pi(\alpha_{n+1})x}
\\
xat^nb
	\ar@2@/^/ [ur] ^-{f_n}
	\ar@2@/_/ [rr] _-{x\pi(\alpha_n)} ^{}="tgt"
\ar@3 "1,2"!<0pt,-15pt>;"tgt"!<0pt,10pt> ^-*+{\tilde{A}_n}
&& x
}
\]
\end{lemma}

Let us now deduce that~$\S_1$ is not of finite derivation type. A direct proof is given in~\cite[Theorem~6.7]{Squier87}, see also~\cite[§6]{Lafont95}. However, we can use an argument coming from the dimension above, to explicit as $4$-cells the relations between the $3$-cells. We choose this proof to incite the reader to continue the exploration of the higher dimensions of rewriting, e.g.\ with~\cite{Metayer03,GuiraudMalbos12advances}. Precisely, we use the fact that the triple critical branchings (the minimum overlaps of three rewriting steps) of a reduced convergent $2$-polygraph~$\Sigma$ induce a homotopy basis of Squier completion~$\Sr(\Sigma)$: all the parallel $3$-cells of~$\tck{\Sr(\Sigma)}$ are equal up to the $4$-cells generated by the triple critical branchings~\cite[Proposition~4.4.4]{GuiraudMalbos12advances}. This implies

\begin{proposition}
\label{Proposition:CriticalTriples}
Let~$\Sigma$ be a reduced convergent $2$-polygraph with no triple critical branching. Then all the parallel $3$-cells of the free $(3,1)$-category~$\tck{\Sr(\Sigma)}$ are equal.
\end{proposition}

We can now conclude with

\begin{proposition}
\label{propS1nottdf}
The monoid~$\S_1$ is not of finite derivation type.
\end{proposition}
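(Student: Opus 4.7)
The plan is to prove that the finite presentation $\Upsilon$ of $\S_1$ from Proposition~\ref{propS1finitetype} admits no finite homotopy basis; by Theorem~\ref{Theorem:Squier2} this is equivalent to $\S_1$ not being of finite derivation type. The obstruction will come from a derivation on the $2$-cells of $\tck{\Upsilon}$ valued in a $\Zb\S_1$-bimodule, chosen to isolate the infinite family of critical branchings hidden behind the finite presentation.

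For each $n \geq 0$, let $\tilde\alpha_n$ be the $2$-cell of $\tck{\Upsilon}$ from $at^nb$ to $1$ defined recursively by $\tilde\alpha_0 = \alpha_0$ and
\[
\tilde\alpha_{n+1} \:=\: at^{n+1}b\,\epsilon^- \star_1 f_n^- y \star_1 x\tilde\alpha_n y \star_1 \epsilon,
\]
as in~\eqref{alphan}. Introduce the derivation $d : \tck{\Upsilon}_2 \fl M$ where $M = \Zb\S_1 \otimes_\Zb \Zb\S_1$ is equipped with its $\Zb\S_1$-bimodule structure by left and right multiplication, by setting $d(\alpha_0) = 1 \otimes 1$ and $d(\beta) = d(\gamma) = d(\delta) = d(\epsilon) = 0$, and extending by $d(u\phi v) = \cl{u} \cdot d(\phi) \cdot \cl{v}$, $d(f \star_1 g) = d(f) + d(g)$ and $d(f^-) = -d(f)$. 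The crucial well-definedness check is that the exchange relations and inverse identities of $\tck{\Upsilon}$ preserve these values: this holds because $d$ only depends on the $\S_1$-classes of the whiskering $1$-cells, and every generating $2$-cell $\phi : v \dfl v'$ of $\Upsilon$ satisfies $\cl{v} = \cl{v'}$ in $\S_1$. An induction on $n$ based on the defining recursion then yields $d(\tilde\alpha_n) = \cl{x^n} \otimes \cl{y^n}$.

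For every $n \geq 0$, the $2$-sphere $\Pi_n = (s_n, t_n)$ of $\tck{\Upsilon}$ with $s_n = f_n \star_1 \tilde\alpha_{n+1}x$ and $t_n = x\tilde\alpha_n$, both going from $xat^nb$ to $x$, is the lift to $\tck{\Upsilon}$ of the critical branching $(\beta t^nb, x\alpha_n)$ of $\Sigma^{\mathrm{Sq}_1}$. Since $d(f_n)=0$, a direct computation gives
\[
c_n \:=\: d(s_n) - d(t_n) \:=\: \cl{x^{n+1}} \otimes \bigl(\cl{y^{n+1}x} - \cl{y^n}\bigr),
\]
which is nonzero because $\cl{y^{n+1}x}$ and $\cl{y^n}$ are distinct normal forms of $\S_1$ under~$\Sigma^{\mathrm{Sq}_1}$.

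Suppose for contradiction that $\Gamma_\Upsilon = \{B_1, \ldots, B_k\}$ is a finite homotopy basis of $\tck{\Upsilon}$. Additivity of $d$ under the $(3,1)$-category compositions $\star_0, \star_1, \star_2$ and its sign-reversal under inverses force, for every $3$-cell $C$ of $\tck{\Gamma_\Upsilon}$, the difference $d(s(C)) - d(t(C))$ to lie in the sub-$\Zb\S_1$-bimodule $N \subset M$ generated by $\{d(s(B_i)) - d(t(B_i))\}_{i=1}^k$; since each $\Pi_n$ must be filled, this would give $c_n \in N$ for every $n$. To reach the contradiction I would project through $\S_1 \longtwoheadrightarrow B = \angle{x,y \mid xy=1}$ (sending $a, b, t$ to $1$): the images $\pi(c_n) = x^{n+1} \otimes (y^{n+1}x - y^n)$ in $\Zb B \otimes \Zb B$ feature leading terms $x^{n+1} \otimes y^{n+1}x$ of unboundedly growing complexity, whereas a careful analysis of the bimodule action $u(a \otimes b)v$ in the bicyclic monoid $B$---exploiting the asymmetry $xy=1$ while $yx$ remains a normal form---shows that only bounded leading-term growth is accessible from a finitely generated sub-bimodule, forcing $\pi(c_n) \notin \pi(N)$ for $n$ sufficiently large. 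This leading-term analysis in the bicyclic monoid, controlling how $\Zb B$-bimodule generators can or cannot produce arbitrary tensor pairs $(x^{n+1}, y^{n+1}x)$, is the main obstacle of the proof.
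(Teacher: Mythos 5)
Your strategy --- a $\Zb\S_1$-bimodule-valued derivation $d$ on the $2$-cells of $\tck{\Upsilon}$ recording the occurrences of $\alpha_0$ together with their left and right contexts --- is genuinely different from the paper's argument, which works at the level of $3$-cells: the paper lifts everything to the infinite convergent presentation $\Sigma^{\mathrm{Sq}_1}$, uses the absence of triple critical branchings (Proposition~\ref{Proposition:CriticalTriples}) to obtain an actual equality of $3$-cells, and then linearises into a free \emph{right} $\Zb\S_1$-module to extract the contradiction $\cl{yx}=1$. Your preliminary steps are sound: $d$ is well defined (exchange and inversion are respected because every generating $2$-cell has source and target with the same image in $\S_1$), the computations $d(\tilde\alpha_n)=\cl{x^n}\otimes\cl{y^n}$ and $c_n=\cl{x^{n+1}}\otimes\bigl(\cl{y^{n+1}x}-\cl{y^n}\bigr)$ are correct, and the defect $d(s(C))-d(t(C))$ of any $3$-cell of $\tck{\Gamma_\Upsilon}$ does lie in the sub-bimodule $N$ generated by the defects of the $B_i$.

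The gap is in the final step, and it is not merely an unfinished computation: the claimed obstruction is false as stated. The bimodule $M=\Zb\S_1\otimes_\Zb\Zb\S_1$ is \emph{free of rank one} over $\Zb\S_1$ as a bimodule, generated by $1\otimes 1$; hence $M$ itself is a finitely generated sub-bimodule containing every $c_n$, and no amount of leading-term analysis can show that "finitely generated" alone is an obstruction. What is needed is the conjunction of two facts: (i) every defect of every $2$-sphere of $\tck{\Upsilon}$ --- in particular each $\delta(B_i)$ --- lies in $N_\infty:=\langle c_n : n\in\Nb\rangle$, so that $N\subseteq\langle c_0,\dots,c_m\rangle$ for some $m$; and (ii) $c_{m+1}\notin\langle c_0,\dots,c_m\rangle$. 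Fact (i) is exactly the assertion that $3$-cells filling your spheres $\Pi_n$ form a homotopy basis of $\tck{\Upsilon}$; this is not free --- it is the transfer theorem (Theorem~\ref{Theorem:HomotopyBasesTransfer}) applied to $\Sigma^{\mathrm{Sq}_1}$ and $\Upsilon$, via the convergence of $\Sigma^{\mathrm{Sq}_1}$ and Theorem~\ref{Theorem:Squier1} --- and you use it implicitly without establishing it. Once (i) is available, (ii) does close along the lines you sketch: projecting to the bicyclic monoid and then onto the $x^{m+2}\otimes(-)$ component reduces (ii) to showing $y^{m+1}(yx-1)\notin\sum_{j\le m}y^j(yx-1)\Zb B$, which holds because $y^j(yx-1)y^ix^k$ vanishes for $i\ge 1$ and equals $y^{j+1}x^{k+1}-y^jx^k$ for $i=0$, while $y^{m+2}x$ and $y^{m+1}$ are untouched by these differences and are distinct normal forms. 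So the plan is completable, but as written it omits the homotopy-basis transfer step that makes the bimodule obstruction meaningful and leaves the decisive non-membership unproved.
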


\begin{proof}
We note that the $3$-cell $A_n y\star_1 \epsilon$ of~$\tck{\Sr(\Sq)}$ has the $2$-sphere~\eqref{An} as boundary:
\[
\xymatrix @!C @C=0em @R=2em {
& at^{n+1}bxy
	\ar@2 [rr] ^-{at^{n+1}b\epsilon} 
	\ar@2 [ddr] |(0.6){\alpha_{n+1}xy}
&& at^{n+1}b
	\ar@2@/^/ [dr] ^-{\alpha_{n+1}}
\\
xat^nby 
	\ar@2@/^/ [ur] ^-{f_n y}
	\ar@2@/_/ [drr] _-{x\alpha_n y} ^{}="tgt"
\ar@3 "1,2"!<-15pt,-25pt>;"tgt"!<-13.18pt,20pt>  ^(0.4)*+{A_n y}
&&&& 1
\\
&& xy
	\ar@2@/_/ [urr] _-{\epsilon}
	\ar@{} [uur] |(0.55){=}
}
\]
As a consequence, for every natural number~$n$, the $3$-cell $g_n^- \star_1 A_n y \star_1 \epsilon$ has source~$\alpha_{n+1}$ and target $g_n^-\star_1 x\alpha_n y \star_1 \epsilon$. We define the $3$-cell~$B_n$ of~$\tck{\Sr(\Sq)}$ by induction on~$n$ as
\[
B_0 \:=\: 1_{\alpha_0}
\qquad\text{and}\qquad 
B_{n+1} \:=\: \big( g_n ^- \star_1 A_n y \star_1 \epsilon \big) \star_2 B_n
\]
so that~$B_n$ has source~$\alpha_n$ and target~$\pi(\alpha_n)$, by definition of~$\pi$. As a consequence, the $3$-cell~$\tilde{A}_n$, when seen as a $3$-cell of~$\tck{\Sr(\Sq)}$ through the canonical inclusion, is parallel to the following composite $3$-cell:
\begin{equation}
\label{composite}
\vcenter{\xymatrix @!C @C=3em @R=3em {
& at^{n+1}bx
	\ar@2@/^8ex/ [dr] ^(0.6){\pi(\alpha_{n+1})x} _(0.3){}="src1"
	\ar@2 [dr] |-{\alpha_{n+1}x} ^(0.3){}="tgt1"
\ar@3 "src1"!<-5pt,-10pt>;"tgt1"!<5pt,10pt> ^-{B_{n+1}^-x}
\\
xat^nb
	\ar@2@/^/ [ur] ^-{f_n}
	\ar@2 [rr] |-{x\alpha_n} ^{}="tgt" _{}="src2"
	\ar@2@/_8ex/ [rr] _-{x\pi(\alpha_n)} ^{}="tgt2"
\ar@3 "1,2"!<0pt,-20pt>;"tgt"!<0pt,15pt> ^-*+{A_n}
\ar@3 "src2"!<0pt,-15pt>;"tgt2"!<0pt,15pt> ^-*+{xB_n}
&& x
}}
\end{equation}
We observe that the convergent $3$-polygraph is reduced and has no critical triple branching. Thus, as a consequence of Proposition~\ref{Proposition:CriticalTriples}, all the parallel $3$-cells of~$\tck{\Sr(\Sq)}$ are equal. This implies that~$\tilde{A}_n$ is equal to the composite~\eqref{composite}. Expanding the definition of~$B_{n+1}$, we get:
\begin{equation}
\label{piAn}
\tilde{A}_n \:=\: \big( f_n \star_1 B_n^- x \big) \star_2 \big( f_n \star_1 g_n^-x \star_1 A_n^- yx \star_1 \epsilon x \big) \star_2 A_n \star_2 xB_n.
\end{equation}

Now, let us assume that~$\tilde{\Sq}$ admits a finite homotopy basis. By Proposition~\ref{Proposition:FiniteBasisExtracted}, there exists a natural number~$n$ such that the $3$-cells~$\tilde{A}_0$, \dots, $\tilde{A}_n$ form a homotopy basis of~$\tck{\tilde{\Sq}}$. In particular, the $3$-cell~$\tilde{A}_{n+1}$ is parallel to a composite~$W$ of the $3$-cells~$\tilde{A}_0$, \dots, $\tilde{A}_n$, hence it is equal to~$W$ in~$\tck{\Sq}$. 

Thus, on the one hand, by application of~\eqref{piAn} to each of~$\tilde{A}_0$, \dots, $\tilde{A}_n$, and by definition of $B_0$, \dots, $B_n$, we get that~$\tilde{A}_{n+1}$ is a composite of the $3$-cells~$A_0$, \dots, $A_n$. But, on the other hand, the relation~\eqref{piAn} tells us that~$\tilde{A}_{n+1}$ is equal to a composite 
\begin{equation}
\label{Equation:sansLesCrochets}
\tilde{A}_{n+1} \:=\: C \star_2 (h \star_1 A_{n+1}^-yx \star_1 k) \star_2 A_{n+1} \star_2 D
\end{equation}
where the $3$-cells~$C$ and~$D$ contain~$A_0$, \dots, $A_n$ only. 

To prove that this leads to a contradiction, let us consider the free right $\Zb\S_1$-module $\Zb\S_1[\Gamma]$ over the homotopy basis $\Gamma=\ens{A_n, n\in \Nb}$. We define a map
\[
[\cdot] : \tck{\Sr(\Sq)} \longrightarrow \Zb\S_1[\Gamma]
\]
thanks to the relations
\[
[uAv] \:=\: [A]\cl{v},
\qquad
[A\star_1 B] \:=\: [A] + [B],
\qquad
[A\star_2 B] \:=\: [A] + [B],
\]
for all $1$-cells~$u$ and~$v$ and $3$-cells~$A$ and~$B$ of~$\tck{\Sr(\Sq)}$ such that the composites are defined.
From~\eqref{Equation:sansLesCrochets}, we deduce that 
\begin{equation}
\label{Equation:avecLesCrochets}
[\tilde{A}_{n+1}] = [C] - [A_{n+1}]\cl{yx} + [A_{n+1}] + [D]
\end{equation}
holds in $\Zb\S_1[\Gamma]$.
Since the $3$-cell~$\tilde{A}_{n+1}$ is a composite of the $3$-cells~$A_0$, \dots, $A_n$, we have that $[\tilde{A}_{n+1}]$ is a linear combination of~$[A_0]$, \dots, $[A_n]$. Since the right $\Zb\S_1$-module $\Zb\S_1[\Gamma]$ is free, it follows from~\eqref{Equation:avecLesCrochets} that
\[
\cl{yx} = 1
\]
holds in~$\S_1$. However, the $1$-cells~$yx$ and~$1$ are distinct normal forms of the convergent presentation~$\Sq$ of~$\S_1$. This means that~$\cl{yx}$ is distinct from~$1$ in~$\S_1$ and, thus, leads to a contradiction, so that we conclude that~$\tck{\tilde{\Sq}}$ does not admit a finite homotopy basis.
\end{proof}

Finally, by Theorem~\ref{Theorem:Squier}, we get:

\begin{corollary}
The monoid~$\S_1$ admits no finite convergent presentation.
\end{corollary}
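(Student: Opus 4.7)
The plan is essentially to invoke the contrapositive of Theorem~\ref{Theorem:Squier} once we have Proposition~\ref{propS1nottdf} in hand, so there is no real work left to do. Concretely: Theorem~\ref{Theorem:Squier} asserts that any monoid admitting a finite convergent presentation must be of finite derivation type, and Proposition~\ref{propS1nottdf} establishes that $\S_1$ is \emph{not} of finite derivation type. The two statements combine immediately to yield the corollary.

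The only subtle point is that Theorem~\ref{Theorem:Squier} applies to a monoid rather than to one of its specific presentations, which is exactly the reason Theorem~\ref{Theorem:Squier2} was proved beforehand: finite derivation type is an intrinsic property of the presented monoid, independent of the choice of finite presentation. So the argument is: if $\S_1$ admitted some finite convergent presentation $\Sigma'$, then by Theorem~\ref{Theorem:Squier} applied to $\Sigma'$, the monoid $\S_1$ would be of finite derivation type, contradicting Proposition~\ref{propS1nottdf}. Hence no finite convergent presentation of $\S_1$ can exist.

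There is no genuine obstacle here, since all the heavy lifting (the construction of the homotopy bases, the transfer theorem, the explicit computation showing that the finite presentation $\Upsilon$ admits no finite homotopy basis, and the homotopical invariance of finite derivation type) has already been carried out in Propositions~\ref{propS1finitetype}--\ref{propS1nottdf} and Theorems~\ref{Theorem:Squier2} and~\ref{Theorem:Squier}. The corollary is therefore a one-line deduction and in the written proof I would simply state ``This follows by contraposition from Theorem~\ref{Theorem:Squier} together with Proposition~\ref{propS1nottdf}.''
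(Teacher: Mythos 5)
Your proposal is correct and is exactly the paper's argument: the corollary is deduced by contraposition from Theorem~\ref{Theorem:Squier} together with Proposition~\ref{propS1nottdf}. Nothing further is needed.
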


\subsection{A variant of Squier's example}

Let us consider the monoid~$\M$ presented by the following $2$-polygraph from~\cite{LafontProute91,Lafont95}:
\[
\LP \:=\: \bigpres {a,b,c,d,d'} {ab\odfll{\alpha_0} a,\; da\odfll{\beta} ac,\; d'a\odfll{\gamma} ac}.
\]
The monoid~$\M$ has similar properties to Squier's example: it admits a finite presentation, it has a decidable word problem, yet it is not of finite derivation type and, as a consequence, it does not admit a finite convergent presentation. 

To prove these facts, the $2$-polygraph~$\LP$ is completed, by Knuth-Bendix procedure, into the infinite convergent $2$-polygraph 
\[
\widecheck{\LP} \:=\: 
	\bigpres {a,b,c,d,d'} {(ac^nb\odfll{\alpha_n} ac^n)_{n\in \Nb},\; da\odfll{\beta} ac,\; d'a\odfll{\beta'} ac}.
\]
Squier completion of~$\widecheck{\LP}$ has two infinite families of $3$-cells:
\[
\xymatrix@!C @R=2em@C=2em{
& ac^{n+1}b
	\ar@2@/^/ [dr] ^{\alpha_{n+1}} 
	\ar@3 []!<0pt,-20pt>;[dd]!<0pt,20pt> ^{A_n}
\\
dac^nb
	\ar@2@/^/ [ur] ^{\beta c^nb}
	\ar@2@/_/ [dr] _{d\alpha_n}
&& ac^{n+1}
\\
& dac^n 
	\ar@2@/_/ [ur] _{\beta c^n}
} 
\qquad\qquad
\xymatrix@!C @R=2em@C=2em{
& ac^{n+1}b
	\ar@2@/^/ [dr] ^{\alpha_{n+1}} 
	\ar@3 []!<0pt,-20pt>;[dd]!<0pt,20pt> ^{B_n}
\\
d'ac^nb
	\ar@2@/^/ [ur] ^{\beta' c^nb}
	\ar@2@/_/ [dr] _{d'\alpha_n}
&& ac^{n+1}
\\
& d'ac^n 
	\ar@2@/_/ [ur] _{\beta' c^n}
} 
\]
Moreover, the $2$-polygraph~$\widecheck{\LP}$ has no triple critical branching. In a similar way to the case of Squier's monoid~$\S_1$, we get that the (finitely generated, with a decidable word problem) monoid~$\M$ is not of finite derivation type: we prove that the $3$-cells~$B_n$ induce a projection~$\pi$ of~$\tck{\widecheck{\LP}}$ onto~$\tck{\LP}$, so that the family $(\pi(A_n))_{n\in\Nb}$ is an infinite homotopy basis of~$\tck{\LP}$. Then we prove that no finite subfamily of $(\pi(A_n))_{n\in\Nb}$ can be a homotopy basis of~$\tck{\LP}$.


\begin{small}
\bibliographystyle{amsalpha}
\bibliography{biblioihdr}
\end{small}

\vfill

\noindent \textsc{Yves Guiraud} -- {\small \textsf{yves.guiraud@pps.univ-paris-diderot.fr}} \\
\begin{small}
INRIA Paris \\ 
IRIF, CNRS UMR 8243 \\
Université Paris 7, Case 7014 \\
75205 Paris Cedex 13, France
\end{small}

\bigskip
\noindent \textsc{Philippe Malbos} -- {\small \textsf{malbos@math.univ-lyon1.fr}} \\
\begin{small}
Univ Lyon, Université Claude Bernard Lyon 1 \\
CNRS UMR 5208, Institut Camille Jordan \\
43 blvd. du 11 novembre 1918 \\
F-69622 Villeurbanne cedex, France
\end{small}

\end{document}